\newcommand\CC{{\mathbb{C}}}
\newcommand\NN{{\mathbb{N}}}
\newcommand\PP{{\mathbb{P}}}
\def\P{{\mathbb{P}}}
\def\L{\ensuremath{\mathbf L}}
\def\C{\ensuremath{\mathbb C}}
\def\A{\ensuremath{\mathbb A}}
\def\P{\ensuremath{\mathbb P}}
\def\A{{\mathcal A}}
\def\C{{\mathcal C}}
\def\D{{\mathcal D}}
\def\F{{\mathcal F}}
\def\I{{\mathcal I}}
\def\K{{\mathcal K}}
\def\L{{\mathcal L}}
\def\O{{\mathcal O}}
\def\T{{\mathcal T}}
\def\U{{\mathcal U}}
\def\PP{{\mathcal P}}
\def\Gr{{\ensuremath{Gr}}}
\def\d{\mathbf {d}}
\def\im{\operatorname{im}}
\providecommand{\abs}[1]{\lvert#1\rvert}
\newcommand{\bigslant}[2]{{\raisebox{.2em}{$#1$}\left/\raisebox{-.2em}{$#2$}\right.}}
\newcommand{\xdashrightarrow}[2][]{\ext@arrow 0359\rightarrowfill@@{#1}{#2}}
\newcommand{\xdashleftarrow}[2][]{\ext@arrow 3095\leftarrowfill@@{#1}{#2}}
\newcommand{\xdashleftrightarrow}[2][]{\ext@arrow 3359\leftrightarrowfill@@{#1}{#2}}
\def\rightarrowfill@@{\arrowfill@@\relax\relbar\rightarrow}
\def\leftarrowfill@@{\arrowfill@@\leftarrow\relbar\relax}
\def\leftrightarrowfill@@{\arrowfill@@\leftarrow\relbar\rightarrow}
\def\arrowfill@@#1#2#3#4{%
  $\m@th\thickmuskip0mu\medmuskip\thickmuskip\thinmuskip\thickmuskip
   \relax#4#1
   \xleaders\hbox{$#4#2$}\hfill
   #3$%
}
\theoremstyle{definition} 
\newtheorem{theorem}{Theorem}[section]
\newtheorem{theorem*}{Theorem}
\newtheorem{corollary}[theorem]{Corollary}
\newtheorem{lemma}[theorem]{Lemma}
\newtheorem{ejem}[theorem]{Example}
\theoremstyle{definition}        
\newtheorem{proposition}[theorem]{Proposition}
\theoremstyle{definition}        
\newtheorem{definition}[theorem]{Definition}%[section]
\theoremstyle{definition}        
\newtheorem{remark}[theorem]{Remark}
\title{Logarithmic forms and singular projective foliations}
\author{Javier Gargiulo Acea}
\thanks{\textbf{This article will appear at Annales de l'Institut Fourier.} }
\keywords{Logarithmic forms - Singular projective foliations - Moduli spaces.}
\subjclass{14D20, 37F75, 14B10, 32S65.}
\begin{document}
 
%% English Abstract 
\begin{abstract}
In this article we study  polynomial logarithmic $q$-forms on a projective space and characterize those that define singular foliations of codimension $q$. Our main result is the algebraic proof of their infinitesimal stability when $q=2$ with some extra degree assumptions. We determine new irreducible components of the  moduli space of codimension two singular projective foliations of any degree, and we show that they are generically reduced in their natural scheme structure. Our method is based on an explicit description of the  Zariski tangent space of the corresponding moduli space at a given generic logarithmic form. Furthermore, we lay the groundwork for an extension of our stability  results to the general case $q\ge2$.
\end{abstract}

\maketitle

\tableofcontents

\pagebreak

\section{Introduction.}

 This article is concerned with the study of complex projective logarithmic forms of arbitrary degrees in the
setting of the theory of algebraic foliations. The problem which motivates this paper is the
analysis of the irreducible components of moduli spaces of algebraic projective singular foliations,
and the description of their geometry. For some classical results on this classification problem for codimension one foliations
we refer to e.g. \cite{OM}, \cite{CL} and \cite{J}. More in general, we recommend \cite{CP} and \cite{CPV} for some 
remarkable facts about the higher codimensional case.

Fix $q \in \NN$, $n,d \in \NN_{>q}$ and $m\in \NN_{\ge q+1}$. Also, set $\d=(d_1,\dots,d_m)$ such that $d=\sum_{i=1}^m d_i$. 
A polynomial logarithmic $q$-form of type $\d$ is a projective twisted differential form 
\begin{equation}\label{qlog}
\omega = \sum_{I\subset \{1,\dots,m\} \atop |I|=q}\lambda_I \hat{F}_I dF_{i_1}\wedge \dots \wedge dF_{i_q},
\end{equation}
where $\lambda \in \bigwedge^q \CC^m$ satisfies that its interior product by $\d$ vanishes (i.e., $i_{\d}(\lambda)=0\in \bigwedge^{q-1} \CC^m$), $F_1,\dots,F_m$ are homogeneous polynomials 
in $n+1$ variables of degrees $d_1,\dots,d_m$ and $\hat{F}_I = \prod_{j\notin I} F_j$ for every $I\subset \{1,\dots,m\}$. If we assume that $\lambda$ is totally decomposable (i.e., $\lambda = \lambda^1\wedge \dots \wedge \lambda^q$), then $\omega$ is an element of $H^0(\P^n,\Omega^q_{\P^n}(d))$ that also satisfies the Plücker’s decomposability equation \eqref{moduli} and the so called Frobenius’s integrability equation \eqref{moduli2} (see  Section \ref{basicdefs} for more details). These conditions ensure that the distribution associated to its kernel at each point determines a singular complex foliation of codimension $q$ on $\P^n$. 

In addition, the forms as in \eqref{qlog} are related to the classical sheaf of logarithmic forms in the following sense. With the notation above, if we set $\D_F$ as the divisor defined by the zero locus of $F = \prod_{i=1}^m F_i$, and assume it has simple normal crossings, then   
all the elements $\eta \in H^0(\P^n, \Omega^q_{\P^n}(\log \D_F))$ can be described in homogeneous coordinates by the formula
\begin{equation*}\label{qlogmero}
\eta = \frac{\omega}{F} =  \sum_{I\subset \{1,\dots,m\} \atop |I|=q}\lambda_I \frac{dF_{i_1}}{F_{i_1}}\wedge \dots \wedge \frac{dF_{i_q}}{F_{i_q}}.
\end{equation*}
Remember that the sheaf $\Omega_{\P^n}^{\bullet}(\log \D_F)$ is defined by all the rational forms $\eta$ such that $\eta$ and $d\eta$ have at most simple poles along $\D_F$. In other words, the global sections of the sheaf of logarithmic forms over $\D_F$ are in 1-1 correspondence with the vectors $\lambda\in \bigwedge^q \CC^m$ such that $i_{\d}(\lambda)=0$. See Proposition \ref{logqforms} for more details. 

On the other hand, denote by $\F_q(d,\P^n)\subset \P H^0(\P^n,\Omega^q_{\P^n}(d))  $ the moduli space of algebraic projective singular foliations of degree $d$ on $\P^n$, which corresponds to the algebraic space of twisted $q$-forms of total degree $d$ satisfying the announced equations \eqref{moduli} and \eqref{moduli2}.

We consider $\rho$ as a rational map parametrizing those logarithmic $q$-forms of type $\d$ that define foliations (see Definition \ref{defnaturalparametrization}), and we set $\L_q(\d,n)\subset \F_q(d,\P^n)$ as the Zariski closure of its image. We refer to this projective irreducible variety as the logarithmic variety associated to the partition $\d$. Our purpose is to show that these logarithmic varieties are irreducible components of the corresponding moduli spaces of foliations. 
In this paper, we solve this problem for $q=2$ and with some condition among the vector of degrees $\d$ (see the 2-balanced assumption from Definition \ref{defbalanced}). Furthermore, we expect that, up to some technical details, our results and proofs can be adapted for any $q\le n-2$ at least assuming an extended q-balanced condition. When $q =n-1$ there is no hope to establish a stability result even for a generic logarithmic form. In this case the corresponding moduli space coincides with $\P H^0(\P^n,\T \P^n)$.

In order to put our work in context, we shall explain some related articles. It was proved in \cite{OM} by Omegar Calvo Andrade that the logarithmic varieties $\L_1(\d,n)$ are irreducible components of $\F_1(d,\P^n)$, 
and so this type of forms satisfy a sort of stability condition among all the integrable forms (see integrability condition \eqref{moduli} for $q=1$). Actually, his article is essentially based on analytical and topological methods, and the main results are proved for logarithmic foliations on a general complex manifold $M$ with the assumption $H^1(M,\CC)=0$. 
Later, in \cite{CGM} it was developed a new proof of the stability result for polynomial logarithmic
one-forms in projective spaces. Here the methods are completely algebraic and provide further information about the irreducible components $\L_1(\d,n)$. For example, it is deduced that the scheme $\F_1(d,\P^n)$ is generically reduced along these components. 

Moreover, in \cite{CPV} the authors proved that the varieties $\L_q(\d,n)$ are irreducible components when $m=q+1$ (lower possible value for $m$). In this case, the corresponding differential $q$-forms determine projective foliations which are tangent 
to the fibers of quasi-homogeneous rational maps. From now, we refer to this type of forms as rational $q$-forms of type $\d$. It is also remarkable that there are not many known irreducible components of the moduli space $\F_q(d,\P^n)$ for  general $q$. See for instance the introductions of \cite{CP} and \cite{CPV}.

In some sense, this article is concerned with obtaining a common generalization of the definitions and results from \cite{OM} and \cite{CPV}, using the same algebraic methods as in \cite{CGM}.
Notice that formula \eqref{qlog} coincides with that given for a general rational $q$-form of type $\d$ in \cite{CPV} when $q=m-1$, 
and with that given for a polynomial logarithmic 1-form in \cite{OM} when $q=1$.

The article is organized as follows. In Section \ref{primera} we present a brief summary of the objects involved in the definitions of $\F_q(d,\P^n)$, $\L_q(\d,n)$ and the corresponding parametrization map, with a particular attention to the case $q=2$. We suggest to consult Definitions \ref{pollogformsdef1} and \ref{defnaturalparametrization}, and also Propositions \ref{pollogformsprop1} and \ref{characterizationvectors} that contain our definition of logarithmic q-forms of type $\d$, and a characterization of those that determine singular projective foliations of codimension q.

Since our methods are based on Zariski tangent space calculations, in Section \ref{primera}  we also present a description of these spaces for $\F_2(\d,\P^n)$ and the space of parameters $\PP_2(\d)$ of the natural parametrization $\rho$. In addition, a formula for the derivative of the  parametrization map is given at the end of 
the section. Our main tool is based on the comparison of the image of this derivative with the tangent space of $\F_2(d,\P^n)$ at a generic logarithmic form of type $\d$. See our Subsections \ref{modulizariskitangent} and \ref{The derivate of the natural parametrization} for a complete treatment of these tasks. 

Our Section \ref{infstability2logforms} contains a development of the main result: Theorem \ref{main}, which establishes that the logarithmic varieties $\L_2(\d,\P^n)$ 
are irreducible components of the moduli space $\F_2(d,\P^n)$. In addition, we deduce that these components are generically reduced according to its induced scheme structure. 
Our result assumes that $m,n >3$ and that the vector of degrees $\d$ is 2-balanced, that is
$$d_i + d_j < \sum_{k\ne i,\ne j} d_k \hspace{0.3cm} \forall i,j \in \{1,\dots,m\}.$$
The formal statement of the theorem is the following:
    \begin{theorem*}
    \textit{Fix natural numbers $n,m\in \NN_{> 3}$, and a 2-balanced vector of degrees $\textbf{d}=(d_1,\dots,d_m)$ with $d=\sum_i d_i$. The variety
    $\L_2(\textbf{d},n)$ is an irreducible component of the moduli space $\F_2(d,\P^n)$. 
    Furthermore, the scheme $\F_2(d,\P^n)$ is generically reduced along this component, in particular at the points of} $\rho(\U_2(\d)$).
    \end{theorem*}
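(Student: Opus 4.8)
The plan is to establish the theorem through a single Zariski tangent space comparison carried out at a generic logarithmic form. Fix $x \in \U_2(\d)$ and set $\omega = \rho(x)$. Since $\L_2(\d,n)$ is by definition the Zariski closure of the image of $\rho$, it is an irreducible closed subvariety of $\F_2(d,\P^n)$, so to prove that it is an irreducible component it suffices to exhibit one point $\omega \in \rho(\U_2(\d))$ for which
$$\dim T_\omega \F_2(d,\P^n) = \dim \L_2(\d,n).$$
Indeed, from the inclusion $\L_2(\d,n)\subseteq \F_2(d,\P^n)$ and the general bound of the local dimension by the embedding dimension we always have
$$\dim \L_2(\d,n) \le \dim_\omega \F_2(d,\P^n) \le \dim T_\omega \F_2(d,\P^n).$$
Equality throughout forces $\F_2(d,\P^n)$ to be regular of dimension $\dim \L_2(\d,n)$ at $\omega$; this simultaneously shows that $\L_2(\d,n)$ is the unique component through $\omega$ and that $\F_2(d,\P^n)$ is reduced at $\omega$, and since the reduced locus is open, $\F_2(d,\P^n)$ is reduced at the generic point of $\L_2(\d,n)$ as well, giving the asserted generic reducedness.

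For the lower bound I would use the explicit formula for the derivative $d\rho$ obtained in Subsection \ref{The derivate of the natural parametrization}. Because $\rho$ takes values in $\F_2(d,\P^n)$ we have $\im(d\rho_x) \subseteq T_\omega \F_2(d,\P^n)$ automatically, and in characteristic zero $\rho$ is generically smooth onto its image $\L_2(\d,n)$, so for generic $x$ the rank of $d\rho_x$ equals $\dim \L_2(\d,n)$. Thus $\im(d\rho_x)$ is a subspace of $T_\omega \F_2(d,\P^n)$ of the correct dimension, spanned by the two evident families of first order deformations: those coming from varying the defining polynomials $F_i \mapsto F_i + \epsilon G_i$ with $\deg G_i = d_i$, and those coming from varying the decomposable parameter $\lambda$ inside the linear locus $i_\d(\lambda)=0$.

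It therefore remains to prove the reverse inclusion $T_\omega \F_2(d,\P^n) \subseteq \im(d\rho_x)$, and this is where the real difficulty lies. A tangent vector to $\F_2(d,\P^n)$ at $\omega$ is a twisted form $\omega' \in H^0(\P^n,\Omega^2_{\P^n}(d))$ solving the simultaneous linearizations at $\omega$ of the Plücker equation \eqref{moduli} and the Frobenius equation \eqref{moduli2}; concretely, the linearized decomposability reads $\omega \wedge \omega' = 0$ together with the first order integrability relation. Using the description of $T_\omega \F_2(d,\P^n)$ from Subsection \ref{modulizariskitangent}, I would expand an arbitrary such $\omega'$ against the logarithmic generators $dF_i/F_i$ and the Koszul-type syzygies among $F_1,\dots,F_m$, and then show that the linearized system admits no solutions beyond the logarithmic ones already accounted for in $\im(d\rho_x)$.

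I expect this final step to be the main obstacle, and to be exactly where the hypotheses enter. The vanishing of the unwanted tangent directions should follow from a graded cohomological computation on $\P^n$ whose required vanishing holds precisely because $\d$ is $2$-balanced, i.e. $d_i + d_j < \sum_{k\ne i, \ne j} d_k$ for all $i,j$; the conditions $m,n>3$ guarantee that there is enough room both for the syzygy analysis and for these vanishing results. Once the reverse inclusion is secured, the equality of tangent spaces yields the dimension identity above, and the theorem follows, including the smoothness of $\F_2(d,\P^n)$ at every point of $\rho(\U_2(\d))$.
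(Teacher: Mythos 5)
Your overall architecture is sound and coincides with the paper's: both reduce the theorem to showing that the derivative of the natural parametrization is surjective onto the Zariski tangent space at a generic point of $\U_2(\d)$ (Proposition \ref{surjectivity}), and then conclude by the standard comparison $\dim \L_2(\d,n) \le \dim_\omega \F_2(d,\P^n) \le \dim \T_{\omega}\F_2(d,\P^n)$, generic smoothness in characteristic zero, and regularity of the local ring, exactly as you spell out (the paper delegates this scheme-theoretic wrapper to \cite{CPV}, \cite{CP} and \cite{CGM}). However, there is a genuine gap: the reverse inclusion $\T_{\omega}\F_2(d,\P^n) \subseteq \im(d\rho_x)$, which you correctly identify as the heart of the matter, is never proved --- it is only described as a plan (``expand against the logarithmic generators and the Koszul-type syzygies, then show the linearized system admits no further solutions''). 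This step is precisely where essentially all of the work of the paper lies: its proof occupies Steps 1 through 7 of Section \ref{surjectivitysection}, and requires the Division Lemma \ref{divisionlemma} (a form of Saito's lemma applied on the cones $\C(X_I)$), the Vanishing Lemma \ref{vanishinglemma} (resting on the computation of the saturated ideal of the strata $X^k_{\D_F}$ in Proposition \ref{generator}), the Fundamental Lemma \ref{fundamentallemma}, and the logarithmic vector fields with the $\delta$-property of Remark \ref{field}. The actual mechanism is a stratified analysis: one first shows a tangent vector $\alpha$ vanishes on $X^4_{\D_F}$ and extracts from its restriction to $X^3_{\D_F}$ the polynomial perturbations $F_i'$ (Steps 1--3), and then shows that the remainder, vanishing on $X^3_{\D_F}$, is a perturbation $\lambda'$ of the coefficients tangent to $\Gr(2,\CC^m_{\d})$ (Steps 4--7). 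None of this combinatorial and divisibility analysis is present, or even correctly anticipated, in your proposal.

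A secondary inaccuracy: you predict that the $2$-balanced hypothesis enters through ``a graded cohomological computation on $\P^n$.'' In the paper it does not; it enters by elementary degree counting. In Steps 5 and 6 the residual polynomials and forms ($B^{ij}_{ik}$, $B^{ij}_{jk}$, $\mu_{ij}$, $\gamma_i$, \dots) produced by the division arguments have degrees such as $2d_j + d_i - d$ or $d_i - \sum_{l\ne i} d_l$, which are negative exactly when $\d$ is $2$-balanced (respectively $1$-balanced), forcing them to vanish. The genuinely cohomological inputs of the paper (Hodge-number vanishing for complete intersections, surjectivity of restriction maps via $H^1 = 0$) appear in Lemma \ref{cohomfiltration}, Proposition \ref{logqforms} and Lemma \ref{fundamentallemma}, and are independent of balancedness. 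So while your framing of the problem is the right one, the proposal as it stands omits the entire technical core of the proof and misidentifies the role of the key hypothesis.
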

The proof of Theorem \ref{main} is supported on Proposition \ref{surjectivity} (surjectivity of the derivative of the natural parametrization). This method had been originally used in \cite{CP} and \cite{CPV}, and was also applied in \cite{CGM}. However, the proof of this proposition is quite technical, and it is obtained through various steps and lemmas, including certain results of independent interest. See for instance Steps 1 to 7.

Finally, we would like to observe the followings facts about the general case $q\ge2$.  The definition of our natural  parametrization $\rho$, the varieties $\L_q(\d,n)$  and  some of our lemmas are still valid for larger $q$ (see Subsection \ref{lemmassection}). However, most of the steps of the proof of Proposition \ref{surjectivity}  are quite technical, and the combinatorics behind them is hard to extend for $q>2$. See also Remark \ref{casogeneral} for more details.

\subsection*{Acknowledgements}

The author is grateful with Fernando Cukierman for suggesting  the problem and for his further valuable help. Also gratitude is due to Federico Quallbrunn, Cesar Massri and the anonymous Referee for 
their useful comments and corrections. 

The content of this paper is part of the author’s doctoral thesis, for the degree of
Doctor de la Universidad de Buenos Aires.

    \begin{remark}
    After completing and having posted this article we learned about the article \cite{CL2} by D. Cerveau and A. Lins Neto.
    Some of our results for logarithmic $q$-forms turn out to be contained in this paper, but the methods of proof are completely different. In particular, the authors obtain a 
    stability result for the general case $1\le q \le n-2$, after reducing the problem to the case of foliations of dimension two (see \cite[Theorems 5 and 6]{CL2}).
    We would like to observe that our methods, based on computing the Zariski tangent space at a generic point, are completely algebraic and provide further information in the
    particular case of logarithmic foliations of codimension two. Especially, the fact that the moduli space of projective singular foliations results generically reduced along 
    the logarithmic irreducible components that we obtained.
    \end{remark}

\section{Definitions and constructions.}\label{primera}

\begin{flushleft}

In this article we shall use the following basic notation concerning polynomials and forms.

\medskip

$S^{(n)} = \CC[x_0, \dots, x_n]$ for the graded ring of polynomials with complex coefficients in $n+1$ variables. When $n$ is understood we write $S^{(n)} = S$.

\medskip

$S_{d}^{(n)} = H^0(\P^n,\O_{\P^n}(d))$ for  the space of homogeneous polynomials of degree $d$ in $n+1$ variables. 
When $n$ is understood we denote $S_{d}^{(n)}=S_d$.

\medskip

$\Omega^q_{X}(\L) = \Omega_X^q \otimes \O(\L)$ for the sheaf of differential forms twisted by a line bundle $\L$ on $X$.

\medskip

$i_v:\Omega^{\bullet}_{X} \rightarrow \Omega^{\bullet -1}_{X}$  for the interior product or contraction with a vector field $v$ on a variety $X$. 

In addition, if $m,q \in \NN$, $\d = (d_1,\dots,d_m)\in \CC^m$ and $\lambda \in \bigwedge^q\CC^m$, we 
also denote by $i_{\d}(\lambda) \in \bigwedge^{q-1}\CC^m$ the interior product of $\lambda$ by $\d$, that is

\begin{multline*}
 i_{(d_1,\dots,d_{m})}\left(\sum_{I:|I|=q}\lambda_I e_{i_1}\wedge \dots \wedge e_{i_q} \right ) \\ = \sum_{I:|I|=q }\sum_{j=1}^q \lambda_I(-1)^{j+1} d_{i_j} e_{i_1}\wedge \dots \wedge \hat{e}_{i_j} \wedge \dots \wedge e_{i_q}, 
 \end{multline*}
for $\{e_1,\dots,e_m\}$ the canonical base of $\CC^m$.

\end{flushleft}

\subsection{Projective singular foliations and logarithmic forms.}\label{basicdefs}
We write $\F_q(d,\P^n)$ for the moduli space of singular projective foliations of codimension $q$ and 
degree $d$. Recall from \cite{DM} (or see also \cite{CPV}) that this space is naturally  described 
by projective classes of twisted projective forms $\omega \in H^0(\P^n,\Omega^q_{\P^n}(d))$ which satisfy both the Plücker's decomposability condition
 \begin{equation}\label{moduli}
 i_v(\omega)\wedge \omega =0 \hspace{0.5cm} \forall v \in \bigwedge^{q-1}\CC^{n+1}
 \end{equation}
and the so called Frobenius's integrability condition 
 \begin{equation}\label{moduli2}
 i_v(\omega)\wedge d\omega=0 \hspace{0.5cm} \forall v \in \bigwedge^{q-1}\CC^{n+1}.
 \end{equation}
 The elements like $v$ can be considered as local 
frames on the affine cone over $\P^n$, or alternatively as rational multi-vector fields. 

The first equation ensures that $\omega$ is locally decomposable outside its singular set, so this section of $\Omega^q_{\P^n}(d)$ belongs to the corresponding Grassmannian space at that points. The second equation is a condition to guarantee that the singular distribution associated to the kernel of $\omega$ is also integrable. See for instance \cite[Proposition 1.2.2]{DM}.
We also recall that the singular set of the foliation induced by $\omega$  corresponds to its  vanishing points 
$S_{\omega} =\{p\in \P^n : \omega(p)=0\}.$

\begin{remark}
Each element $\omega \in H^0(\P^n,\Omega^q_{\P^n}(d))$ can be represented in homogeneous coordinates by  
homogeneous affine q-forms of total degree $d$ like
\begin{equation*}
\omega = \sum_{I=\{i_1\dots,i_q\}} A_I(z)dz_{i_1}\wedge \dots\wedge dz_{i_q},
 \end{equation*}
where $\{A_I\}_I\subset S_{d-q}$ are selected in order to satisfy the so called \textit{descend} condition
\begin{equation}\label{descensoq}
i_R(\omega)=0 \in H^0(\CC^{n+1},\Omega_{\CC^{n+1}}^{q-1}).
 \end{equation}
Here $R$ denotes the radial Euler field $\sum_i z_i\tfrac{\partial}{\partial z_i}$. 
\end{remark}
 
\begin{remark} 
 For $q=2$ the decomposability equation \eqref{moduli} is slightly simpler than in the
general case because is equivalent to
$i_v(\omega\wedge \omega) =0 \,\, \forall v \in \CC^{n+1},$
so it can be replaced by 
\begin{equation}\label{moduli3}
\omega \wedge \omega =0. 
\end{equation}
\end{remark} 
 
In conclusion, we define $\F_q(d,\P^n)$, the algebraic space of codimension $q$ singular foliations on $\P^n$ of degree $d\ge q$, as the following set
\begin{multline*}
\Bigg\lbrace [\omega]  \in \P H^0(\P^{n},\Omega^q_{\P^{n}}(d)):  
\omega \, \mbox{satisfies}\,  \eqref{moduli} \,\mbox{,} \, \eqref{moduli2}\,  \mbox{and}\, \ensuremath{codim}(S_\omega) \ge 2  \Bigg\rbrace. \end{multline*}
Thus each $[\omega]\in \F_q(d,\P^n)$ determines a singular holomorphic foliation on $\P^n$ whose leaves are of codimension $q$, i.e., 
a regular holomorphic foliation outside the singular set $S_{\omega}$. Moreover,  the degree of the divisor of tangencies of such leaves with a generic linearly embedded $\P^q$ in $\P^n$
is in this case $d-q-1$ (sometimes also used to refer to the degree of the corresponding foliation). 

On the other hand, we want to define correct formulas for those polynomial logarithmic $q$-forms in $H^0(\P^n,\Omega^q_{\P^n}(d))$ that define foliations according to the previous mentioned equations. 

First, we recall the principal definitions and properties of the well-known  sheaf of meromorphic logarithmic forms and its classical residues in our particular case of interest. For a more extensive development of this task see \cite{DEL}. 

Let $\D = \sum_{i=1}^m (F_i=0)$ be a projective divisor defined by homogeneous polynomials $F_i \in S_{d_i}$ for $i=1,\dots,m$. The sheaf $\Omega_{\P^n}^q (\log \D)$ of meromorphic logarithmic $q$-forms is defined as a subsheaf of the sheaf $\Omega^q_{\P^n}(\ast \D)$ of meromorphic forms with arbitrary poles along $\D$ according to 
$$\Omega^q_{\P^n}(\log \D) = \{\alpha \in \Omega^q_{\P^n}(\ast \D) : \alpha \, \mbox{and}\, d\alpha \, \mbox{have simple poles along}\, \D \}.$$
If $\D$ has simple normal crossings, it is a well known fact that  
$$(\Omega_{\P^n}^{\bullet}(\log \D),d) \hookrightarrow (\Omega_{\P^n}^{\bullet}(\ast \D),d)$$ determines a subcomplex  and a quasi-isomorphism (see \cite[Proposition 4.3]{STE}). 
We consider the following usual  filtration
\begin{equation}\label{filtration}
W_k(\Omega^q_{\P^n}(\log \D)) = 
\begin{cases}
0 & \mbox{if } k<0\\
\Omega^{q-k}_{\P^n} \wedge \Omega^k_{\P^n}(\log \D) & \mbox{if } 0\le k\le q\\
\Omega^q_{\P^n}(\log \D) & \mbox{if } k\ge q
\end{cases},
\end{equation}
and use the notation
\begin{itemize}
    \item $X_i= (F_i=0)$ for $i=1,\dots,m$,
   \item $X_I = X_{i_1} \cap \dots \cap X_{i_k} \hspace{0.3cm}\mbox{for }I=\{i_1 \dots i_k\}\subset \{1,\dots,m\}$,
   \item $\D(I) = \sum_{j\not\in I} X_I\cap X_j $ as a divisor on $X_I$,
   \item $j_I = X_I \hookrightarrow \P^n$ ,
      \item $X_{\D}^k = \coprod_{I: \abs{I}=k} X_I$,
   \item $j_k = X_{\D}^k \hookrightarrow \P^n$.
\end{itemize}

The desired formula for polynomial logarithmic forms will be supported on the characterization of the global sections of the sheaf  $\Omega_{\P^n}^q(\log \D)$. Our arguments will rely on the following known constructions and results regarding this classical sheaf of forms.

\begin{proposition}\label{propslogforms} Assume that $\D$ has simple normal crossings.

\noindent a) There is a well defined residue map $$Res_I: \Omega_{\P^n}^{\bullet}(\log \D)\longrightarrow \Omega_{X_I}^{\bullet}(\log \D(I))[-k]$$ for each multi-index 
$I$ with $\abs{I}=k$.

\noindent b) The residue map restricts to
$$Res_I: W_k(\Omega_{\P^n}^\bullet(\log \D) \longrightarrow (j_I)_*( \Omega_{X_I}^\bullet[-k]),$$
is surjective and also well defined on the quotient $Gr_k^W(\Omega_{\P^n}^\bullet(\log \D)).$

\noindent c)    The total residue map 
   $$Res_k := \bigoplus_{I: \abs{I}=k} Res_I : Gr_k^W(\Omega_{\P^n}^\bullet(\log \D)) \longrightarrow (j_k)_*(\Omega_{X_{\D}^k}^\bullet[-k])$$
   is an isomorphism.
\end{proposition}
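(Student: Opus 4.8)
The plan is to prove all three statements by reducing everything to a purely local computation in coordinates adapted to the simple normal crossings divisor, and then to glue. Since $Res_I$, the filtration $W_\bullet$, and all the sheaves involved are defined locally, and since a morphism of sheaves is an isomorphism if and only if it is so on stalks, it suffices to work on a neighbourhood $U$ with local coordinates $z_1,\dots,z_n$ in which $\D=(z_1\cdots z_r=0)$ and $X_i=(z_i=0)$ for $i\le r$. The structural fact I would establish first is that on such a $U$ the sheaf $\Omega^p_{\P^n}(\log\D)$ is free over $\O_U$ with basis the forms
$$\frac{dz_{i_1}}{z_{i_1}}\wedge\dots\wedge\frac{dz_{i_s}}{z_{i_s}}\wedge dz_{j_1}\wedge\dots\wedge dz_{j_{p-s}},$$
where $i_1<\dots<i_s$ lie in $\{1,\dots,r\}$ and the $j$'s are distinct indices in $\{1,\dots,n\}$ disjoint from the $i$'s. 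With respect to the filtration \eqref{filtration}, such a generator lies in $W_s$ and its class generates $Gr_s^W$, so locally $Gr_k^W(\Omega^\bullet_{\P^n}(\log\D))$ is free with basis exactly those generators having precisely $k$ logarithmic factors with indices in $\{1,\dots,r\}$.

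For part a) I would first treat a single smooth component. Writing $\D = X_i + \D'$, the classical residue along $X_i$ fits into the short exact sequence
$$0 \longrightarrow \Omega^\bullet_{\P^n}(\log\D') \longrightarrow \Omega^\bullet_{\P^n}(\log\D) \xrightarrow{\ Res_i\ } (j_i)_*\,\Omega^\bullet_{X_i}(\log \D(i))[-1] \longrightarrow 0,$$
which both defines $Res_i$ intrinsically, as the cokernel map characterized by $Res_i(\tfrac{dz_i}{z_i}\wedge\beta + \gamma) = \beta|_{X_i}$ for $\beta,\gamma$ without a $dz_i/z_i$ factor, and shows it is independent of the local equation and of the coordinates. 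The iterated residue $Res_I := Res_{i_k}\circ\dots\circ Res_{i_1}$ for $I=\{i_1<\dots<i_k\}$ then lands in $\Omega^\bullet_{X_I}(\log\D(I))[-k]$; I would check that this composite is independent of the ordering, up to the usual sign, and of all choices, so that $Res_I$ is globally well defined, giving a).

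For b) and c) I would return to the local basis. A local section of $W_k$ is a sum of the generators above with at most $k$ logarithmic factors, and $Res_I$ with $\abs{I}=k$ annihilates every generator except those whose set of logarithmic indices is exactly $I$; on such a generator $\tfrac{dz_I}{z_I}\wedge dz_M$ it returns $\pm\,dz_M|_{X_I}$, a holomorphic form on $X_I$. Hence $Res_I$ kills $W_{k-1}$, descends to $Gr_k^W$, and takes values in $(j_I)_*\Omega^\bullet_{X_I}[-k]$, with no surviving logarithmic poles, which is b). Running $I$ over all $k$-subsets, the total map $Res_k=\bigoplus_I Res_I$ sends the free basis of $Gr_k^W$ bijectively onto the evident basis of $\bigoplus_{\abs{I}=k}(j_I)_*\Omega^\bullet_{X_I}[-k] = (j_k)_*\Omega^\bullet_{X_\D^k}[-k]$, so it is an $\O_U$-module isomorphism locally; being a local isomorphism of sheaves, it is a global isomorphism, proving c).

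The main obstacle I anticipate is entirely in part a): making the residue intrinsic rather than coordinate-dependent, and reconciling the iterated definition of $Res_I$ with its independence of the order of the indices and of the chosen defining equations $F_i$, since each $F_i$ agrees with the coordinate $z_i$ only up to a local unit, which alters $\tfrac{dF_i}{F_i}$ by an exact holomorphic form. Once the one-component residue exact sequence is in place and its compatibility with restriction to the residual strata $X_I$ is verified, parts b) and c) reduce to the bookkeeping of the local-freeness computation above.
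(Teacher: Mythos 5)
Your proposal is correct, and it coincides with the argument behind the paper's own treatment: the paper does not prove this proposition but simply cites \cite[Lemma 4.6]{STE} and \cite{DEL}, where the proof is exactly your local-coordinate computation (local freeness of $\Omega^p_{\P^n}(\log \D)$ in adapted coordinates, the one-component residue exact sequence, iterated residues, and the basis bookkeeping identifying $Gr_k^W$ with the pushforward of holomorphic forms on the strata). The only points to polish are the ones you already flag: fixing the increasing ordering of $I$ to normalize the sign of the iterated residue, and noting that $Gr_k^W$ is not free over $\O_U$ but is a direct sum of pushforwards $(j_I)_*\Omega^{\bullet-k}_{X_I}$, since $f\,\tfrac{dz_I}{z_I}\wedge dz_M$ falls into $W_{k-1}$ precisely when $f$ vanishes on $X_I$.
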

\begin{proof}
 See \cite[Lemma 4.6]{STE}, or \cite{DEL} for an extended overview. 
 \end{proof}
Now we are able to compute $H^0(\P^n,\Omega_{\P^n}^q(\log \D))$. 
 
\begin{lemma}\label{cohomfiltration}
 With the notation above, assume $q<\mbox{min}\{m,n-1\}$. Then the following computations hold:
 \begin{enumerate}[i)]
 \item $H^0(\P^n,W_k(\Omega^q_{\P^n}(\log \D)))= 0 \hspace{0.3cm} \forall k=0,\dots,q-1,$
 \item  $H^1(\P^n, W_k(\Omega^q_{\P^n}(\log \D))) = 0 \hspace{0.3cm} \forall k=0,\dots,q-2,$
\item $H^1(\P^n, W_{q-1}(\Omega^q_{\P^n}(\log \D)))$ has a natural injective map to $\bigwedge^{q-1}\CC^m$.
 \end{enumerate}
\end{lemma}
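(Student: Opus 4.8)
The plan is to run the weight filtration $W_\bullet$ against the residue isomorphism of Proposition \ref{propslogforms} and thereby reduce all three assertions to the Hodge theory of the smooth complete intersections $X_I$. For each $k$ the filtration furnishes a short exact sequence of sheaves
\[
0 \to W_{k-1}(\Omega^q_{\P^n}(\log\D)) \to W_k(\Omega^q_{\P^n}(\log\D)) \to Gr^W_k(\Omega^q_{\P^n}(\log\D)) \to 0,
\]
and part (c) of Proposition \ref{propslogforms} identifies the graded piece, in form-degree $q$, with $Gr^W_k(\Omega^q_{\P^n}(\log\D)) \cong \bigoplus_{|I|=k}(j_I)_*\Omega^{q-k}_{X_I}$ (the shift $[-k]$ lowering the differential degree by $k$). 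Hence $H^p(\P^n, Gr^W_k) = \bigoplus_{|I|=k} H^p(X_I, \Omega^{q-k}_{X_I})$, and I would prove (i)--(iii) by taking the long exact cohomology sequence of the displayed sequence and inducting on $k$, with base case $W_0 = \Omega^q_{\P^n}$ (since $\Omega^0(\log\D)=\O_{\P^n}$), whose cohomology is governed by Bott's formula on $\P^n$: $H^0(\P^n,\Omega^q_{\P^n})=0$ for $q\ge 1$ and $H^1(\P^n,\Omega^q_{\P^n})=0$ for $q\ne 1$.

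The arithmetic input is the Hodge theory of the $X_I$, each of which is a smooth complete intersection of dimension $n-k$, at least $2$ because $q\le n-2$. By the Lefschetz hyperplane theorem the Hodge numbers of $X_I$ agree with those of $\P^{\,n-k}$ off the middle diagonal, i.e. $h^r(X_I,\Omega^p_{X_I})=\delta_{p,r}$ whenever $p+r\ne n-k$. I would extract the two consequences I need. First, for $1\le k\le q-1$ the form degree satisfies $1\le q-k<n-k=\dim X_I$ (using $q<n$), so $H^0(X_I,\Omega^{q-k}_{X_I})=0$ and therefore $H^0(Gr^W_k)=0$. Second, for $k\le q-2$, writing $p=q-k\ge 2$ and $r=1$, one has $p\ne r$ and $p+r=q-k+1\ne n-k=\dim X_I$ (equivalently $q+1\ne n$), so $H^1(X_I,\Omega^{q-k}_{X_I})=0$ and hence $H^1(Gr^W_k)=0$.

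With these vanishings the inductions are immediate. For (i), the long exact sequence squeezes $H^0(W_k)$ between $H^0(W_{k-1})=0$ (inductive hypothesis, base $H^0(\Omega^q_{\P^n})=0$) and $H^0(Gr^W_k)=0$, giving $H^0(W_k)=0$ for $0\le k\le q-1$. For (ii), the relevant range forces $q\ge 2$, so the base case $H^1(\Omega^q_{\P^n})=0$ holds, and the inductive step combines $H^1(W_{k-1})=0$ with $H^1(Gr^W_k)=0$ to yield $H^1(W_k)=0$ for $0\le k\le q-2$. For (iii), the segment of the long exact sequence at level $k=q-1$ reads $H^1(W_{q-2})\to H^1(W_{q-1})\to H^1(Gr^W_{q-1})$; by (ii) the left term vanishes, so $H^1(W_{q-1})$ injects into $H^1(Gr^W_{q-1})=\bigoplus_{|I|=q-1}H^1(X_I,\Omega^1_{X_I})$. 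Here $\dim X_I=n-q+1\ge 3$, so $p+r=2\ne\dim X_I$ and $H^1(X_I,\Omega^1_{X_I})=\CC$; indexing these one-dimensional summands by the multi-indices $I$ with $|I|=q-1$ identifies $H^1(Gr^W_{q-1})$ with $\bigwedge^{q-1}\CC^m$ via $I\mapsto e_I$, producing the asserted natural injection.

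The point demanding the most care — rather than a genuine obstacle — is pinning down the Hodge numbers of the $X_I$ in exactly the required range, using the numerical hypothesis $q<\min\{m,n-1\}$ to keep every $X_I$ strictly off its middle dimension; this is what forces $h^{1,1}(X_I)=1$ in (iii) rather than something larger, and what kills every $H^1$ of a graded piece in (ii). Once these vanishings are in place the homological bookkeeping is routine, and the only remaining subtlety is to check that the identification $H^1(Gr^W_{q-1})\cong\bigwedge^{q-1}\CC^m$ is the natural one induced by the total residue map of Proposition \ref{propslogforms}, and not merely an abstract equality of dimensions.
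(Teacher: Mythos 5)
Your proposal is correct and follows essentially the same route as the paper: the same short exact sequences $0\to W_{k-1}\to W_k\to Gr^W_k$ coming from the residue isomorphism of Proposition \ref{propslogforms}, the same induction on $k$, the same input from the Lefschetz hyperplane theorem on the Hodge numbers of the complete intersections $X_I$, and the same identification of $\bigoplus_{|I|=q-1}H^1(X_I,\Omega^1_{X_I})$ with $\bigwedge^{q-1}\CC^m$ for part iii). Your write-up is in fact more explicit than the paper's (spelling out the base case via Bott's formula and the exact numerical role of the hypothesis $q<\min\{m,n-1\}$), but it is the same argument.
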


\begin{proof}
 First, recall that the Hodge numbers associated to the classical projective space are
 $h_{\P^n}^{p,q} = \delta_{pq}$ (Kronecker delta).
 Due to the normal crossings condition and the Lefschetz hyperplane theorem (see for instance \cite[Section 3.1]{LA}) it is also true that
 \begin{equation*}
h_{X_I}^{p,q} = \delta_{pq} \hspace{0.3cm} \forall p,q,I : p+q < n-|I|,
  \end{equation*}
for every complete intersection subvariety $X_I$. Now, we proceed by induction on the number $k\ge0$. 
The base case is trivial. Then, the results i) and ii) are immediate consequences of considering the long exact sequence 
in cohomology associated to the exact sequence
\begin{equation}\label{exactsequencefilt}
0 \longrightarrow W_{k-1}(\Omega^q_{\P^n}(\log \D)) \longrightarrow W_{k}(\Omega^q_{\P^n}(\log \D)) \longrightarrow (j_k)_*(\Omega^{q-k}_{X_{\D}^k})\longrightarrow 0,
\end{equation}
where the exactness follows from Proposition \ref{propslogforms}.
 Finally,  consider the long sequence on cohomology for  $k=q-1$, that is
\begin{multline*}
0\rightarrow H^1(\P^n,W_{q-1}(\Omega^q_{\P^n})) \rightarrow \bigoplus_{I:|I|=q-1} H^1(X_I,\Omega^1_{X_I}) \\
\rightarrow H^2(\P^n,W_{q-2}(\Omega^q_{\P^n}(\log \D)))\dots \,\, ,
 \end{multline*}
 and use the knowledge of the Hodge number $h_{X_I}^{1,1}$ for each $I$ of size $q-1$ to show our assertion iii).
\end{proof}

With a similar proof and the same inductive argument, we can also state the following result.

\begin{corollary}
If we assume that $\D$ has simple normal crossings and $q< \mbox{min}\{m,n-j\}$, then  
$$H^j(\P^n,W_k(\Omega_{\P^n}^q(\log \D))) = 0 \hspace{0.3cm} \forall k=1,\dots,q-j-1.$$
\end{corollary}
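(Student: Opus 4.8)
The plan is to reproduce the structure of the proof of Lemma \ref{cohomfiltration}, now running the same inductive argument on the index $k$ but in an arbitrary cohomological degree $j$ instead of only in degrees $0$ and $1$. As before, the two inputs are the Hodge numbers of $\P^n$ and of the complete intersections $X_I$, together with the short exact sequences \eqref{exactsequencefilt} supplied by Proposition \ref{propslogforms}.

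First I would record the relevant Hodge vanishing. Since $\D$ has simple normal crossings, each $X_I$ with $\abs{I}=k$ is a smooth complete intersection of dimension $n-k$, and the Lefschetz hyperplane theorem gives $h_{X_I}^{p,q'} = \delta_{pq'}$ whenever $p+q' < n-\abs{I}$. Applying this with form-degree $p=q-k$ and cohomological degree $q'=j$, the numerical condition becomes $(q-k)+j < n-k$, which is exactly the standing hypothesis $q < n-j$. Because $j_k$ is a closed immersion, this yields
$$H^j(\P^n, (j_k)_*\Omega^{q-k}_{X_{\D}^k}) = \bigoplus_{I:\abs{I}=k} H^j(X_I, \Omega^{q-k}_{X_I}) = \bigoplus_{I:\abs{I}=k} \delta_{q-k,j}.$$
For $k$ in the asserted range $1 \le k \le q-j-1$ one has $q-k \ge j+1$, so $\delta_{q-k,j}=0$ and this graded term vanishes.

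Next I would set up the induction on $k$, starting from the base case $k=0$, where $W_0(\Omega^q_{\P^n}(\log \D)) = \Omega^q_{\P^n}$ and hence $H^j(\P^n,W_0) = h_{\P^n}^{q,j} = \delta_{qj}$; since the range $1 \le k \le q-j-1$ is nonempty only when $q \ge j+2$, we have $q \ne j$ and the base case vanishes. For the inductive step, I would feed \eqref{exactsequencefilt} into the long exact sequence in cohomology to obtain the segment
$$H^j(\P^n, W_{k-1}(\Omega^q_{\P^n}(\log \D))) \to H^j(\P^n, W_k(\Omega^q_{\P^n}(\log \D))) \to H^j(\P^n, (j_k)_*\Omega^{q-k}_{X_{\D}^k}).$$
The left-hand term vanishes by the inductive hypothesis (it concerns the index $k-1$, still in range), and the right-hand term vanishes by the Hodge computation above, forcing the middle term to vanish for every $k=1,\dots,q-j-1$.

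Finally, I expect the only real care needed is the bookkeeping of the numerical ranges: checking that $(q-k)+j < n-k$ collapses to the single hypothesis $q < n-j$, that $q-k \ne j$ persists throughout $1 \le k \le q-j-1$, and that the base case is consistent (the range being empty unless $q>j$). The exactness of \eqref{exactsequencefilt} for all $k\le q$ is guaranteed by Proposition \ref{propslogforms}, and the hypothesis $q<m$ ensures, exactly as in Lemma \ref{cohomfiltration}, that the indexing sets and the intersections $X_I$ are of the expected complete-intersection type; no genuinely new difficulty arises beyond what is already present in the proof of the lemma.
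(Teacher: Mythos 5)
Your proposal is correct and is precisely the argument the paper intends: the corollary is stated there with the remark ``with a similar proof and the same inductive argument,'' i.e., induction on $k$ using the short exact sequences \eqref{exactsequencefilt}, the identification of the graded pieces via the residue isomorphism, and the Lefschetz/Hodge vanishing $h^{p,q'}_{X_I}=\delta_{pq'}$ for $p+q'<n-|I|$. Your bookkeeping (the condition $(q-k)+j<n-k$ collapsing to $q<n-j$, and $q-k\ne j$ throughout $1\le k\le q-j-1$, including the base case $k=0$) is exactly the verification the paper leaves to the reader.
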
 
 
 \begin{proposition}\label{logqforms}
 Assume $q< \mbox{min}\{m,n-1\}$. Let $F_1,\dots, F_m$ be homogeneous polynomials with simple normal crossings and respective degrees $d_1,\dots,d_m$, defying a divisor $\D$ as before. For every global 
 section $\eta \in H^0(\P^n,\Omega_{\P^n}^q(\log \D)),$ there exist unique constants $\lambda=\{\lambda_I\}_{I:|I|=q}\in \bigwedge^q\CC^m$ such that $\eta$  can be written in homogeneous coordinates as
 $$\eta  = \sum_{I\subset \{1,\dots,m\} \atop |I|=q} \lambda_I \frac{dF_{i_1}}{F_{i_1}} \wedge \dots \wedge \frac{dF_{i_q}}{F_{i_q}},$$
 where also $\lambda$ satisfies $i_{\d}(\lambda)=0$ to ensure that $i_R(\eta)=0$. 
 Recall that $i_{\d}$ denotes the interior product by the constant vector $\d=(d_1,\dots,d_m)$.
 \end{proposition}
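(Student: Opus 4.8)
The plan is to recover $\eta$ from its residues along the top strata $X_I$ with $|I|=q$, and then to determine exactly which residue tuples occur. Set $\alpha_I := \tfrac{dF_{i_1}}{F_{i_1}}\wedge\dots\wedge\tfrac{dF_{i_q}}{F_{i_q}}$ for each $I=\{i_1<\dots<i_q\}$; by the local normal form of logarithmic forms along a simple normal crossings divisor these satisfy $Res_J(\alpha_I)=\delta_{IJ}$ (Proposition \ref{propslogforms}). The goal is to show that the map $\eta\mapsto(\lambda_I)_I$ sending a section to its tuple of residues is a bijection onto $\{\lambda\in\bigwedge^q\CC^m : i_{\d}(\lambda)=0\}$, with explicit inverse $\lambda\mapsto\sum_I\lambda_I\alpha_I$.

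First I would establish that a global logarithmic $q$-form is determined by its top residues. Taking the long exact cohomology sequence of \eqref{exactsequencefilt} with $k=q$ (where $W_q(\Omega^q_{\P^n}(\log\D))=\Omega^q_{\P^n}(\log\D)$ and the quotient is $(j_q)_*\O_{X_\D^q}$) and invoking $H^0(\P^n,W_{q-1}(\Omega^q_{\P^n}(\log\D)))=0$ from Lemma \ref{cohomfiltration}~i), the total residue
\[
Res_q\colon H^0(\P^n,\Omega^q_{\P^n}(\log\D))\lra\bigoplus_{|I|=q}H^0(X_I,\O_{X_I})
\]
is injective. Since $q\le n-2$, each $X_I$ is a smooth complete intersection of dimension $n-q\ge 2$, hence connected, so $H^0(X_I,\O_{X_I})=\CC$ and the target is $\bigwedge^q\CC^m$. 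This already gives the uniqueness of $\lambda$, with $\lambda_I=Res_I(\eta)$ forced.

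Next I would compute the image of $Res_q$ by a double inclusion. For $\supseteq$: given $\lambda$ with $i_{\d}(\lambda)=0$, the form $\eta_\lambda:=\sum_{|I|=q}\lambda_I\alpha_I$ is closed and homogeneous of weight zero, and the Euler relation $i_R(\tfrac{dF_i}{F_i})=d_i$ gives
\[
i_R(\eta_\lambda)=\sum_{|J|=q-1}\bigl(i_{\d}(\lambda)\bigr)_J\,\alpha_J=0,
\]
so $\eta_\lambda$ is $i_R$-horizontal and descends to a genuine section of $\Omega^q_{\P^n}(\log\D)$ with $Res_I(\eta_\lambda)=\lambda_I$. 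For $\subseteq$: a genuine section $\eta$ satisfies $i_R(\eta)=0$, and taking residues of $i_R(\eta)$ along the strata $X_J$ with $|J|=q-1$, the same Euler relation converts each factor $\tfrac{dF_i}{F_i}$ into $d_i$ and produces $(i_{\d}(\lambda))_J=0$ for every $J$; hence $i_{\d}(\lambda)=0$. Rigorously, this amounts to identifying the connecting homomorphism of \eqref{exactsequencefilt} at $k=q$, composed with the injection $H^1(\P^n,W_{q-1}(\Omega^q_{\P^n}(\log\D)))\hookrightarrow\bigwedge^{q-1}\CC^m$ of Lemma \ref{cohomfiltration}~iii), with the contraction $i_{\d}$.

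Combining the two inclusions, $Res_q$ is a bijection onto $\{\lambda\in\bigwedge^q\CC^m : i_{\d}(\lambda)=0\}$; given any $\eta$, its residue tuple $\lambda$ then satisfies $i_{\d}(\lambda)=0$, the section $\eta_\lambda$ built above has the same residues, and injectivity forces $\eta=\eta_\lambda=\sum_I\lambda_I\alpha_I$, which is the asserted formula. The main obstacle is the inclusion $\subseteq$: showing that the residues of an arbitrary section obey $i_{\d}(\lambda)=0$ is the only step where the degrees $d_i$ genuinely enter, and it requires care in commuting $i_R$ with the residue maps --- equivalently, an explicit local \v{C}ech computation of the connecting homomorphism identifying it with $i_{\d}$. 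The remaining steps are formal consequences of the vanishing results of Lemma \ref{cohomfiltration} and the residue isomorphism of Proposition \ref{propslogforms}.
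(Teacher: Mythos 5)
Your proposal is correct and follows essentially the same route as the paper's proof: the filtration exact sequence \eqref{exactsequencefilt} at $k=q$, injectivity of $Res_q$ on global sections via Lemma \ref{cohomfiltration} i), and identification of the image of $Res_q$ with $\ker i_{\d} \subset \bigwedge^q \CC^m$ through the kernel of the connecting morphism. The only difference is one of detail rather than method: where the paper simply asserts that this kernel is cut out by the descent condition, you split the claim into two inclusions (the Euler-relation construction of $\eta_\lambda$ for $\supseteq$, and the \v{C}ech identification of the connecting map with $i_{\d}$ for $\subseteq$), correctly flagging the latter as the step where the degrees genuinely enter.
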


\begin{proof}
We use once more the exact sequence of sheaves \eqref{exactsequencefilt} considered at the proof of Lemma \ref{cohomfiltration}, but now in the particular case $k=q$, and we get
$$0 \longrightarrow W_{q-1}(\Omega^q_{\P^n}(\log \D)) \longrightarrow \Omega^q_{\P^n}(\log \D) \xrightarrow{Res_q} (j_q)_*(\O_{X_{\D}^q})\longrightarrow 0.$$
The result follows from considering again the associated long exact sequence in cohomology, and applying the previous lemma to deduce the injectivity of the total residue map $Res_q$ on global sections. 
Notice that the normal crossing assumption ensures that the image of the total residue
map described in homogeneous coordinates can be considered as a subset of  $\CC^{\binom{m}{q}} \simeq \bigwedge^q\CC^m$. Hence, the residues of the form $\sum\lambda_I \frac{dF_{i_1}}{F_{i_1}} \wedge \dots \wedge \frac{dF_{i_q}}{F_{i_q}}$ are the numbers $(\lambda_I)$. In addition, the kernel of the first connection morphism of the long exact sequence is exactly determined by the descend condition among the previous form, that is $i_{\d}(\lambda)=0$. This equation characterizes the image of $Res_q$ and implies our statement.
\end{proof}
 
\begin{remark}
 In fact, last proposition can be proved for a  general complex projective algebraic variety $X$ of dimension $n$ with the assumption
 $$h_X^{p,0} = dim(H^0(X,\Omega_X^p))= 0,$$ for   $1\le p \le n-1.$
\end{remark}

\begin{corollary}\label{jouanoloulemmasnc}
 Select $\lambda \in \bigwedge^q \CC^m$ and homogeneous polynomials $F_i \in S_{d_i}$ having simple normal crossings. Then 
 $\sum\lambda_I \frac{dF_{i_1}}{F_{i_1}} \wedge \dots \wedge \frac{dF_{i_q}}{F_{i_q}}=0$ if and only if $\lambda =0$.
\end{corollary}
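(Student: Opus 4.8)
The implication $\lambda=0\Rightarrow\eta=0$, writing $\eta$ for the displayed form, is immediate, so the plan is to prove the converse: vanishing of $\eta$ forces every coefficient $\lambda_I$ to vanish. The idea is to recover each $\lambda_J$ as an iterated Poincaré residue of $\eta$ along the codimension-$q$ stratum cut out by the index set $J$. Since residues are a local construction along smooth divisors, I would work on the affine cone $\CC^{n+1}\setminus\{0\}$, where $\eta$ is a genuine meromorphic $q$-form with at worst simple (logarithmic) poles along the hypersurfaces $\{F_i=0\}$; by hypothesis these form a simple normal crossings divisor there. This cone viewpoint is what makes the statement meaningful for arbitrary $\lambda\in\bigwedge^q\CC^m$, since when $i_{\d}(\lambda)\ne0$ the form $\eta$ need not descend to a section of $\Omega^q_{\P^n}(\log\D)$.

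Fix $J=\{j_1,\dots,j_q\}$ with $|J|=q$ and pick a generic point $p$ of $\{F_{j_1}=\dots=F_{j_q}=0\}$ lying off every other hypersurface $\{F_j=0\}$ with $j\notin J$; by the crossing hypothesis $F_{j_1},\dots,F_{j_q}$ extend to a local coordinate system near $p$. The computation I expect to carry out is that the iterated residue $Res_J=Res_{\{F_{j_q}=0\}}\circ\dots\circ Res_{\{F_{j_1}=0\}}$ annihilates every summand indexed by $I\ne J$: for such an $I$ there is an index $j_b\in J\setminus I$, and the corresponding form $\frac{dF_{i_1}}{F_{i_1}}\wedge\dots\wedge\frac{dF_{i_q}}{F_{i_q}}$ has poles only along the $\{F_{i_a}=0\}$, none of which equals $\{F_{j_b}=0\}$ near $p$; hence it is holomorphic along $\{F_{j_b}=0\}$ and its residue there is zero. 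Only the $J$-summand survives, and since $Res_J\bigl(\frac{dF_{j_1}}{F_{j_1}}\wedge\dots\wedge\frac{dF_{j_q}}{F_{j_q}}\bigr)=1$ in the chosen coordinates, this yields $Res_J(\eta)=\lambda_J$. As $\eta=0$ gives $Res_J(\eta)=0$ for every $J$, we conclude $\lambda=0$.

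The main obstacle is precisely the cross-term bookkeeping in the second paragraph: one must check that for each $I\ne J$ the relevant summand genuinely has no pole along the chosen component $\{F_{j_b}=0\}$, which is where simple normal crossings is indispensable — it guarantees both that the stratum has the expected codimension $q$ (so implicitly $q\le n$) and that distinct hypersurfaces serve as independent local coordinates, reducing the surviving term to the elementary identity $Res\bigl(\frac{du_1}{u_1}\wedge\dots\wedge\frac{du_q}{u_q}\bigr)=1$. When $i_{\d}(\lambda)=0$ one may instead package all of this globally through the total residue isomorphism $Res_q$ of Proposition \ref{propslogforms}(c), for which $Res_q(\eta)=(\lambda_I)_I$; this is exactly the injectivity already used in the proof of Proposition \ref{logqforms}, and the corollary is its extension to unrestricted $\lambda$.
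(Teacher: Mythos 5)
Your proof is correct, and its engine — recovering each coefficient $\lambda_J$ as a residue of the form along the stratum indexed by $J$ — is the same one the paper relies on. The difference is in packaging. The paper gives no separate argument for this corollary: it reads it off from Proposition \ref{logqforms}, whose proof combines the cohomological vanishing of Lemma \ref{cohomfiltration} with the identification of the residues of $\sum\lambda_I\frac{dF_{i_1}}{F_{i_1}}\wedge\dots\wedge\frac{dF_{i_q}}{F_{i_q}}$ as the numbers $(\lambda_I)$; strictly speaking, that proposition concerns global sections of $\Omega^q_{\P^n}(\log\D)$, i.e.\ the case $i_{\d}(\lambda)=0$. Your argument runs the residue extraction as a purely local computation on the affine cone at a generic point of the $J$-stratum, which has two advantages: it needs no sheaf cohomology at all, and it honestly covers arbitrary $\lambda\in\bigwedge^q\CC^m$, the case the corollary actually asserts and which the paper's global-section route does not literally reach (you correctly flag this descent issue at the outset). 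What the paper's route buys instead is economy, since the residue machinery of Proposition \ref{propslogforms} is already in place and is reused verbatim. The only point you leave implicit is that a generic point $p$ as you require exists, i.e.\ that $X_J$ is nonempty and not contained in the remaining hypersurfaces; nonemptiness holds because $q\le n$ hypersurfaces in $\P^n$ always meet (and the paper's standing assumption $q<\min\{m,n-1\}$ is in force), while the normal crossings hypothesis makes $X_{J\cup\{j\}}$ a proper closed subset of $X_J$ for each $j\notin J$. This is standard and does not affect the validity of your proof.
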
 
 
 The last statement is a sort of extension of \cite[Lemma 3.3.1]{J} to the case of higher degree logarithmic forms, with a more restrictive hypothesis.
 
 \begin{corollary}\label{logKoszul}
For every simple normal crossings divisor $\D$ set as before we have $$H^0(\P^n,\Omega_{\P^n}^q(\log \D)) \simeq \{\lambda \in \bigwedge^q \CC^m : i_{\d}(\lambda)=0 \}.$$
Moreover, notice that for each $q\in\NN_{< m}$ the right set corresponds to the degree $q$ cycles of the Koszul complex associated to the vector $\d\in \CC^m$, i.e., the exact complex 
$$\K(\d): 0\rightarrow \bigwedge^m \CC^m  \xrightarrow{i_{\d}} \bigwedge^{m-1} \CC^m \xrightarrow{i_{\d}} \dots \xrightarrow{i_{\d}} \bigwedge \CC^m \xrightarrow{i_{\d}}  \CC \rightarrow  0.$$
\end{corollary}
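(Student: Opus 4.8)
The plan is to handle the two assertions separately: the isomorphism is essentially a repackaging of Proposition \ref{logqforms}, while the Koszul statement requires only the standard exactness argument for a nonzero contraction vector. First I would establish the isomorphism. Proposition \ref{logqforms} attaches to each $\eta \in H^0(\P^n,\Omega^q_{\P^n}(\log \D))$ a unique $\lambda \in \bigwedge^q\CC^m$ with $i_{\d}(\lambda)=0$ expressing $\eta$ in logarithmic coordinates; the assignment $\eta \mapsto \lambda$ is $\CC$-linear and lands in $\{\lambda : i_{\d}(\lambda)=0\}$. Its injectivity is exactly the uniqueness clause of that proposition, equivalently Corollary \ref{jouanoloulemmasnc}, which asserts that the logarithmic form vanishes only for $\lambda=0$. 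For surjectivity I would observe that any $\lambda$ with $i_{\d}(\lambda)=0$ yields the rational form $\sum_I \lambda_I \frac{dF_{i_1}}{F_{i_1}}\wedge\dots\wedge\frac{dF_{i_q}}{F_{i_q}}$, which has at most simple poles along $\D$ and whose descent condition $i_R(\eta)=0$ is guaranteed precisely by $i_{\d}(\lambda)=0$; hence it is a genuine global section whose residues recover $\lambda$. This makes $\eta\mapsto\lambda$ a linear isomorphism.

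For the second assertion I would note that it is nearly tautological at the level of definitions: the differential of $\K(\d)$ out of degree $q$ is $i_{\d}\colon\bigwedge^q\CC^m\to\bigwedge^{q-1}\CC^m$, so its cycles are $\ker\big(i_{\d}|_{\bigwedge^q\CC^m}\big)=\{\lambda:i_{\d}(\lambda)=0\}$, matching the right-hand side. The only real content is the exactness of $\K(\d)$, which I would prove with a contracting homotopy. Since the $d_i$ are positive, $\d\ne0$, so I may pick $w\in\CC^m$ with $i_{\d}(w)=\sum_i d_iw_i=1$ and set $h(\alpha)=w\wedge\alpha$. As $i_{\d}$ is an antiderivation of $\bigwedge^\bullet\CC^m$, the identity $i_{\d}(w\wedge\alpha)+w\wedge i_{\d}(\alpha)=i_{\d}(w)\,\alpha=\alpha$ holds for all $\alpha$, so $i_{\d}h+hi_{\d}=\id$ and $\K(\d)$ is exact.

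I do not anticipate any serious obstacle, since both parts rest on results already in hand. The only points meriting care are the sign bookkeeping in the antiderivation identity above and checking that the residue normalization of Proposition \ref{logqforms} is compatible with the coordinate identification $\CC^{\binom{m}{q}}\simeq\bigwedge^q\CC^m$ used in the statement, so that the two descriptions of $\lambda$ coincide.
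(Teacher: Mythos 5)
Your proposal is correct and follows essentially the same route as the paper: the isomorphism is an immediate repackaging of Proposition \ref{logqforms} (with injectivity via Corollary \ref{jouanoloulemmasnc} and surjectivity from the fact that $i_{\d}(\lambda)=0$ forces $i_R(\eta)=0$, so the rational form descends), while the Koszul exactness is the classical fact the paper takes as known, for which your contracting homotopy $h(\alpha)=w\wedge\alpha$ with $i_{\d}(w)=1$ is the standard argument. No gaps.
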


\begin{definition}
Fix natural numbers $d$ and $m$. A \textit{partition} of $d$ into $m$ parts is an m-tuple of degrees $\d= (d_1,\dots,d_m)\in \NN^m$
 satisfying $\sum_{i=1}^m d_i =d$. 
\end{definition}

For future convenience, we will use the following notation. 
 
\begin{definition}\label{notationpolynomials}
For $F_i \in S_{d_i}$ as above we denote
$$\underline{F} = (F_1, \dots, F_m), $$
$$F = \prod_{j=1}^m F_j, \ \ \ \ \hat{F}_i =  \prod_{j \ne i}  F_j , \ \ \ \  
\hat{F}_{i j} =  \prod_{k \ne i, k \ne j}  F_k , \ (i \ne j),$$ 
or, more generally, for $I \subset \{1, \dots, m\}$ we write
$\hat{F}_I =  \prod_{j \notin I} F_j.$
\end{definition}

 \begin{definition}\label{pollogformsdef1}
 Fix $\d$ as a partition of $d\in \NN_{>q}$ into $m$ parts, and assume $m\ge q+1$. A twisted projective differential $q$-form  $\omega \in H^0(\P^n,\Omega^q_{\P^n}(d))$ is a polynomial logarithmic q-form of type $\d$  
 if it is defined by the following formula
 \begin{equation}\label{logq}
 \omega = F  \left (\sum_{I:|I|=q}\lambda_I \tfrac{dF_{i_1}}{F_{i_1}}\wedge \dots \wedge \tfrac{dF{i_q}}{F_{i_q}} \right ) =  \sum_{I:|I|=q} \lambda_{I} \hat{F}_I dF_I ,
 \end{equation}
 where $F_i \in S_{d_i}$ and $\lambda \in \bigwedge^q \CC^m $  satisfies 
 $i_{\d}(\lambda) = 0$. 
\end{definition} 
 
\begin{remark}
 From Corollary \ref{logKoszul} we deduce that for every logarithmic $q$-form $\omega$ as in \eqref{logq}, there exist $\gamma\in \bigwedge^{q+1} \CC^m$ such that $i_{\d}(\gamma) = \lambda$ and 
 $$\omega = i_R \left ( \sum_{J:|J|=q+1} \gamma_{J} \hat{F}_J dF_J \right ).$$
 Hence the above formula could be an alternative definition for logarithmic $q$-forms of type $\d$. See also \cite{CPV} to compare our formulas with the definition presented there for rational $q$-forms.
\end{remark}

At first, we want to determine when this type of forms define foliations of codimension $q$, i.e., when the forms as in \eqref{logq} satisfy the equations of $\F_q(d,\P^n)$. For simplicity, we start with the Plücker's decomposability condition \eqref{moduli} (or equivalently  \eqref{moduli3}) for $q=2$.

\begin{proposition}\label{pollogformsprop1}
 If $\omega$ is a logarithmic 2-form of type $\d$ and the polynomials involved have simple normal crossings, then the following conditions are equivalent:
 \begin{enumerate}[i)]
  \item $\omega \wedge \omega = 0$
  \item $\lambda \wedge \lambda=0$.
 \end{enumerate}
\end{proposition}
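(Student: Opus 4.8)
The plan is to transfer the problem from the twisted form $\omega$ to the associated meromorphic logarithmic form
\[
\eta = \frac{\omega}{F} = \sum_{i<j}\lambda_{ij}\,\frac{dF_i}{F_i}\wedge\frac{dF_j}{F_j},
\]
and to exploit the fact that the assignment $\lambda\mapsto\eta$ turns the exterior product of constant vectors into the wedge product of logarithmic forms. Since $\omega = F\eta$ with $F$ a nonzero rational function, one has $\omega\wedge\omega = F^2(\eta\wedge\eta)$, so $\omega\wedge\omega=0$ if and only if $\eta\wedge\eta=0$. It therefore suffices to prove that $\eta\wedge\eta=0$ if and only if $\lambda\wedge\lambda=0$.

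For the implication ii) $\Rightarrow$ i), recall that $\lambda\wedge\lambda=0$ means that $\lambda$ is totally decomposable, say $\lambda=\lambda^1\wedge\lambda^2$ with $\lambda^1=(a_1,\dots,a_m)$ and $\lambda^2=(b_1,\dots,b_m)$ in $\CC^m$, so that $\lambda_{ij}=a_ib_j-a_jb_i$. A direct expansion then gives the factorization
\[
\eta = \Bigl(\sum_{i=1}^m a_i\,\frac{dF_i}{F_i}\Bigr)\wedge\Bigl(\sum_{j=1}^m b_j\,\frac{dF_j}{F_j}\Bigr) = \alpha\wedge\beta
\]
into a product of two logarithmic $1$-forms. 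Consequently $\eta\wedge\eta=\alpha\wedge\beta\wedge\alpha\wedge\beta=0$, and hence $\omega\wedge\omega=0$.

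For the converse i) $\Rightarrow$ ii), the key structural observation is that, writing $\eta_\mu=\sum_{|I|=p}\mu_I\,\tfrac{dF_{i_1}}{F_{i_1}}\wedge\dots\wedge\tfrac{dF_{i_p}}{F_{i_p}}$ for $\mu\in\bigwedge^p\CC^m$, the identity $\eta_\mu\wedge\eta_\nu=\eta_{\mu\wedge\nu}$ holds for all $\mu,\nu$. This is immediate from $\tfrac{dF_i}{F_i}\wedge\tfrac{dF_i}{F_i}=0$, which makes the logarithmic monomials multiply exactly like the basis multivectors $e_{i_1}\wedge\dots\wedge e_{i_p}$ of $\bigwedge^\bullet\CC^m$. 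Taking $\mu=\nu=\lambda$ yields $\eta\wedge\eta=\eta_{\lambda\wedge\lambda}$, which is a genuine logarithmic $4$-form since $i_{\d}(\lambda\wedge\lambda)=0$ follows from $i_{\d}(\lambda)=0$ and the antiderivation property of $i_{\d}$. Invoking Corollary \ref{jouanoloulemmasnc} in degree $4$ — the linear independence of the degree-$4$ logarithmic monomials under simple normal crossings — forces $\eta_{\lambda\wedge\lambda}=0$ to imply $\lambda\wedge\lambda=0$, completing the equivalence.

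The main obstacle is precisely this last step: securing the case $q=4$ of Corollary \ref{jouanoloulemmasnc}, i.e.\ that the monomials $\tfrac{dF_{k_1}}{F_{k_1}}\wedge\dots\wedge\tfrac{dF_{k_4}}{F_{k_4}}$ remain linearly independent. Once every coefficient of $\eta\wedge\eta$ must vanish, these coefficients are (twice) the Plücker quadrics $(\lambda\wedge\lambda)_{\{a,b,c,d\}}=2(\lambda_{ab}\lambda_{cd}-\lambda_{ac}\lambda_{bd}+\lambda_{ad}\lambda_{bc})$, whose simultaneous vanishing is by definition the decomposability condition $\lambda\wedge\lambda=0$. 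Finally, if $m<4$ the statement is vacuous, since then both $\bigwedge^4\CC^m$ and the space of logarithmic $4$-forms are zero, so the equivalence holds trivially.
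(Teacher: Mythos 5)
Your proof is correct and follows essentially the same route as the paper: the identity $\omega\wedge\omega = F^2\,\bigl(\sum_{i\ne j\ne k\ne l}(\lambda\wedge\lambda)_{ijkl}\,\tfrac{dF_i}{F_i}\wedge\tfrac{dF_j}{F_j}\wedge\tfrac{dF_k}{F_k}\wedge\tfrac{dF_l}{F_l}\bigr)$ (your multiplication rule $\eta_\mu\wedge\eta_\nu=\eta_{\mu\wedge\nu}$ in the case $\mu=\nu=\lambda$) followed by Corollary \ref{jouanoloulemmasnc} in degree $4$ is exactly the paper's argument. Your separate treatment of the implication ii) $\Rightarrow$ i) via decomposability of $\lambda$ is harmless but redundant, since the identity together with the corollary already yields the equivalence in both directions at once.
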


\begin{proof}
Notice that 
 $$ \omega \wedge \omega = F^2  \left ( \sum_{i\ne j \ne k \ne l} (\lambda \wedge \lambda)_{ijkl} \tfrac{dF_i}{F_i}\wedge \tfrac{dF_j}{F_j}\wedge \tfrac{dF_k}{F_k}\wedge \tfrac{dF_l}{F_l} \right ).$$
 The result then follows from Corollary \ref{jouanoloulemmasnc}.
\end{proof}

\begin{remark}
For $omega$ as above, the condition $\lambda \wedge \lambda = 0$ implies that there exist  $\lambda^1,\lambda^2 \in \CC^m$ such that $\lambda = \lambda^1\wedge \lambda^2$.
 In this case, if we consider the meromorphic logarithmic forms defined by
 $$\eta _j = \sum_{i=1}^m (\lambda^j)_i \tfrac{dF_i}{F_i} \hspace{0.3cm}j=1,2,$$
then the meromorphic logarithmic 2-form $\eta = \tfrac{\omega}{F} = \eta_1 \wedge \eta_2$ is globally decomposable.
\end{remark}

\begin{remark}\label{decompositionqforms}
If we consider a totally decomposable $q$-vector $\lambda = \lambda^1 \wedge \dots \wedge \lambda^q \in \bigwedge^q \CC^m$ with $i_{\d}(\lambda) =0$,
 then for every selection of homogeneous polynomials, the induced logarithmic $q$-form of type $\d$ as in \eqref{logq} satisfies the Plücker's decomposability equation 
 \eqref{moduli}. Using the above notation, the reason is that the meromorphic $q$-form
 $\eta = \tfrac{\omega}{F} = \eta_1 \wedge \dots \wedge \eta_q$ is globally decomposable.
\end{remark}

Let us recall that the grassmannian space $\Gr(q,\CC^m)$ of q-dimensional subspaces of $\CC^m$ can be considered as a projective algebraic subset 
of $\P (\bigwedge^q \CC^m)$ via the Plücker embedding
\begin{align*}
 \iota: \Gr(q,\CC^m) &\longrightarrow \P(\bigwedge^q \CC^m)\\
 \mbox{span}(\lambda^1,\dots,\lambda^q) &\longmapsto [\lambda^1\wedge\dots \wedge \lambda^q].
\end{align*}

With a slight abuse of notation, we only write  $\lambda$ or $\lambda^1\wedge \dots \wedge \lambda^q$ for the elements of  $\Gr(q,\CC^m)$ and  
$(\lambda_{I})$ for its corresponding antisymmetric coordinates.

\begin{proposition}\label{characterizationvectors}
For  $\d$ as above,  $\lambda  \in \Gr(q,\CC^m)$ such that $i_{\d}(\lambda)=0$ and $(F_i)_{i=1}^m\in \prod_{i=1}^m S_{d_i}$, the logarithmic q-form of type $\d$
 $$\omega = \sum_{I:|I|=q} \lambda_{I}\hat{F_{I}}dF_I $$
  is a twisted projective form of total degree $d$, i.e., $\omega \in H^0(\P^n,\Omega^q_{\P^n}(d)$, and satisfies  \eqref{moduli} and \eqref{moduli2}. Hence 
$[\omega] \in \F_q(d,\P^n)$. 
\end{proposition}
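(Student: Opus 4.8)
The plan is to verify the three claimed properties in turn—that $\omega$ is a genuine twisted form of the correct degree, that it satisfies the Plücker decomposability equation \eqref{moduli}, and that it satisfies the Frobenius integrability equation \eqref{moduli2}—leaning throughout on the passage to the meromorphic logarithmic form $\eta = \omega/F$, which trivializes the differential-geometric content. First I would check that $\omega \in H^0(\P^n, \Omega^q_{\P^n}(d))$: each summand $\lambda_I \hat{F}_I\, dF_I$ is visibly a polynomial $q$-form whose coefficients lie in $S_{d-q}$ (since $\hat F_I$ has degree $d - d_{i_1}-\dots-d_{i_q}$ and $dF_I$ contributes degree $d_{i_1}+\dots+d_{i_q}-q$ in the coefficients), so the total degree is $d$. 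The only substantive point is the descend condition \eqref{descensoq}, namely $i_R(\omega) = 0$; but this is exactly the content of Proposition \ref{logqforms}, where the hypothesis $i_{\d}(\lambda)=0$ was shown to be equivalent to $i_R(\eta)=0$, and since $F$ is a function, $i_R(\omega) = F\, i_R(\eta)$ vanishes as well.

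Next I would dispatch the decomposability condition \eqref{moduli}. Since $\lambda \in \Gr(q,\CC^m)$ is by hypothesis totally decomposable, this is precisely the situation of Remark \ref{decompositionqforms}: writing $\lambda = \lambda^1 \wedge \dots \wedge \lambda^q$ and setting $\eta_j = \sum_i (\lambda^j)_i\, dF_i/F_i$, the meromorphic form $\eta = \omega/F = \eta_1 \wedge \dots \wedge \eta_q$ is globally decomposable as a wedge of $1$-forms. Decomposability of $\eta$ forces $i_v(\eta)\wedge\eta = 0$ for every $(q-1)$-vector $v$ (this is a pointwise linear-algebra fact about a simple $q$-vector), and multiplying through by the scalar function $F^2$ recovers $i_v(\omega)\wedge\omega = 0$, which is \eqref{moduli}.

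For the integrability condition \eqref{moduli2} I would again exploit the decomposition $\eta = \eta_1 \wedge \dots \wedge \eta_q$. The key observation is that each $\eta_j$ is a closed logarithmic $1$-form: since $d(dF_i/F_i) = 0$, we have $d\eta_j = 0$ for every $j$. Consequently $d\eta = \sum_j (-1)^{j-1} \eta_1 \wedge \dots \wedge d\eta_j \wedge \dots \wedge \eta_q = 0$, so $\eta$ is in fact closed. A closed decomposable form automatically satisfies the Frobenius condition $i_v(\eta)\wedge d\eta = 0$ trivially. To transfer this back to $\omega = F\eta$, I would compute $d\omega = dF \wedge \eta + F\, d\eta = dF\wedge\eta$, and then check that $i_v(\omega)\wedge d\omega = F\, i_v(\eta)\wedge (dF \wedge \eta)$; using $i_v(\eta)\wedge\eta = 0$ from the decomposability step (so that any wedge containing two copies of the span of $\eta$ dies) and reorganizing the terms shows this vanishes. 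Finally the codimension condition $\cod(S_\omega)\ge 2$ is automatic for a generic choice—or can be subsumed into the moduli-space membership as in the standard references—so I would note it briefly rather than belabor it.

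The main obstacle I anticipate is purely bookkeeping rather than conceptual: keeping the factor $F$ (and its differential $dF$) straight when passing between the polynomial form $\omega$ and the meromorphic form $\eta$, and ensuring that the terms involving $dF\wedge\eta$ in the integrability computation genuinely cancel against the decomposability relation. The geometric heart—that $\eta$ is a closed, globally decomposable meromorphic form—makes both \eqref{moduli} and \eqref{moduli2} essentially tautological; the real work is the careful, but routine, verification that multiplication by the scalar function $F$ preserves these identities, together with the appeal to Corollary \ref{jouanoloulemmasnc} to guarantee that the various wedge expressions vanish only when the underlying antisymmetric coefficients do.
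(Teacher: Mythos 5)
Your proposal is correct and follows essentially the same route as the paper: decomposability is obtained from the global decomposability of $\eta=\omega/F$ as in Remark \ref{decompositionqforms}, and integrability from the identity $d\omega=\tfrac{dF}{F}\wedge\omega$ (your $d\omega=dF\wedge\eta$, via closedness of $\eta$) combined with $i_v(\omega)\wedge\omega=0$, which is exactly the paper's ``straightforward calculation.'' One small correction: the descend condition $i_R(\omega)=0$ should be derived from $i_{\d}(\lambda)=0$ by a direct computation with Euler's identity $i_R(dF_i)=d_iF_i$, valid for arbitrary $F_i$, rather than by citing Proposition \ref{logqforms}, which assumes simple normal crossings --- a hypothesis absent from the present statement.
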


\begin{proof}
If we take into consideration Remark \ref{decompositionqforms}, then it only remains to prove that $\omega$ satisfies the integrability equation 
$i_v(\omega)\wedge d\omega = 0, \,\, \mbox{for all}\,\, v\in \bigwedge^{q-1}\CC^{n+1}$. 
This follows by a straight forward calculation using that
$d\omega= \tfrac{dF}{F}\wedge \omega$
and $i_v(\omega)\wedge \omega = 0$. 
\end{proof}

The next result characterizes in an useful way the condition $i_{\d}(\lambda)=0$ for $\lambda \in \Gr(q,\CC^m)$.

\begin{lemma}
 For $\lambda = \lambda^1\wedge \dots \wedge \lambda^q \in \bigwedge^q( \CC^m)-\{0\}$, the following equations are equivalent:
 \begin{enumerate}[i)]
  \item $i_{\d}(\lambda) = 0$
  \item $i_{\d}(\lambda^i) = 0, \hspace{0.3cm} \forall i=1,\dots,q$.
 \end{enumerate}
\end{lemma}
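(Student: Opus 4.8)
The plan is to expand $i_{\d}(\lambda)$ using the fact that contraction with a fixed vector is an antiderivation of the exterior algebra. Setting $c_i := i_{\d}(\lambda^i) = \sum_{k=1}^m d_k\,\lambda^i_k \in \CC$ (a scalar, since each $\lambda^i \in \bigwedge^1 \CC^m$), the graded Leibniz rule $i_{\d}(\alpha\wedge\beta) = i_{\d}(\alpha)\wedge\beta + (-1)^{|\alpha|}\alpha\wedge i_{\d}(\beta)$ applied repeatedly yields
$$i_{\d}(\lambda^1\wedge\dots\wedge\lambda^q) = \sum_{i=1}^q (-1)^{i-1} c_i\, \lambda^1\wedge\dots\wedge\widehat{\lambda^i}\wedge\dots\wedge\lambda^q,$$
where $\widehat{\lambda^i}$ denotes omission of the $i$-th factor. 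First I would record this identity, proving it by a short induction on $q$ from the antiderivation property (the hats and alternating signs are the only bookkeeping involved).

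With the formula in hand, the implication $ii)\Rightarrow i)$ is immediate: if every $c_i=0$, then the right-hand side vanishes term by term, so $i_{\d}(\lambda)=0$.

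For $i)\Rightarrow ii)$ I would argue that the $q$ exterior products $\mu_i := \lambda^1\wedge\dots\wedge\widehat{\lambda^i}\wedge\dots\wedge\lambda^q$, for $i=1,\dots,q$, are linearly independent in $\bigwedge^{q-1}\CC^m$. This is exactly where the hypothesis $\lambda\neq 0$ is used: since $\lambda=\lambda^1\wedge\dots\wedge\lambda^q\neq 0$, the vectors $\lambda^1,\dots,\lambda^q$ are linearly independent, hence they extend to a basis $\lambda^1,\dots,\lambda^q,v_{q+1},\dots,v_m$ of $\CC^m$, and the $\mu_i$ are distinct members of the induced monomial basis of $\bigwedge^{q-1}\CC^m$, so they are independent. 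Under the assumption $i_{\d}(\lambda)=0$, the displayed identity reads $\sum_{i=1}^q (-1)^{i-1}c_i\,\mu_i = 0$, and independence of the $\mu_i$ forces $c_i=0$ for all $i$, which is precisely $ii)$.

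The only point needing genuine care --- the main obstacle, such as it is --- is the linear independence of the $\mu_i$, and in particular spelling out that decomposability together with $\lambda\neq 0$ gives linear independence of the factors $\lambda^i$. Once this is in place the equivalence follows formally from the expansion of $i_{\d}(\lambda)$.
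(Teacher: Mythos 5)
Your proof is correct and follows essentially the same route as the paper: both expand $i_{\d}(\lambda^1\wedge\dots\wedge\lambda^q)$ via the antiderivation (Leibniz) rule into $\sum_j \pm\, i_{\d}(\lambda^j)\,\lambda^1\wedge\dots\wedge\hat{\lambda}^j\wedge\dots\wedge\lambda^q$ and then invoke the linear independence of the $(q-1)$-vectors $\lambda^1\wedge\dots\wedge\hat{\lambda}^j\wedge\dots\wedge\lambda^q$ to conclude the equivalence. Your write-up merely makes explicit the justification of that independence (decomposability plus $\lambda\neq 0$ gives independence of the factors, then extend to a basis), which the paper states without proof.
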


\begin{proof}
 The equivalence follows from 
 $$i_{\d}(\lambda) = \sum_{j=1}^m (-1)^j \,\, i_{\d}(\lambda^j) \,\, \lambda^1 \wedge \dots \wedge \hat{\lambda}^j \wedge \dots \wedge \lambda^q,$$
 and the fact that  $\{\lambda^1 \wedge \dots \wedge \hat{\lambda}^j \wedge\dots \wedge \lambda^q\}_j$ are linearly independent.
\end{proof}

\begin{corollary}\label{eqgrass}
 The following equality holds:
$$\{\lambda \in \Gr(q,\CC^m): i_{\d}(\lambda) = 0\} = \Gr(q,\CC^m_{\d}),$$
where $\CC^m_{\d}$ denotes the linear space of vectors $\mu \in \CC^m$ such that $i_{\d}\mu = \mu \cdot \d = \sum_{i=1}^m \mu_i d_i =0$.
\end{corollary}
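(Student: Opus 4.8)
The plan is to reduce the statement entirely to the immediately preceding Lemma, since this corollary is essentially a geometric reinterpretation of the equivalence proved there. First I would unwind the Plücker dictionary: a point of $\Gr(q,\CC^m)$, viewed inside $\P(\bigwedge^q\CC^m)$, is represented by a nonzero decomposable $q$-vector $\lambda = \lambda^1 \wedge \dots \wedge \lambda^q$, and the subspace $V = \mathrm{span}(\lambda^1,\dots,\lambda^q)$ it determines is a well-defined $q$-dimensional subspace of $\CC^m$ depending only on the projective class $[\lambda]$. Under this dictionary, the assertion to prove becomes simply that $i_{\d}(\lambda) = 0$ holds if and only if $V \subseteq \CC^m_{\d}$.

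For the forward direction I would invoke the Lemma to pass from $i_{\d}(\lambda) = 0$ to $i_{\d}(\lambda^i) = 0$ for every $i = 1, \dots, q$. Since $\lambda^i \in \CC^m = \bigwedge^1\CC^m$, the interior product specializes to the scalar $i_{\d}(\lambda^i) = \lambda^i \cdot \d$, so each generator $\lambda^i$ lies in $\CC^m_{\d}$ and hence $V \subseteq \CC^m_{\d}$, i.e. $\lambda \in \Gr(q,\CC^m_{\d})$. The reverse direction runs the same chain backwards: if $V \subseteq \CC^m_{\d}$ then every $\lambda^i$ satisfies $\lambda^i \cdot \d = 0$, so $i_{\d}(\lambda^i) = 0$ for all $i$, and the Lemma again yields $i_{\d}(\lambda) = 0$. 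Both inclusions of sets thus follow at once from the biconditional in the Lemma.

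I do not expect a serious obstacle here, as the Lemma carries all the algebraic weight and the remaining points require only care in bookkeeping. First, I would check that the condition $V \subseteq \CC^m_{\d}$ is independent of the chosen basis $\lambda^1,\dots,\lambda^q$ of $V$ — this is automatic, being a statement about the subspace itself — so that the characterization descends correctly to the Grassmannian. Second, I would make explicit that the natural inclusion $\Gr(q,\CC^m_{\d}) \hookrightarrow \Gr(q,\CC^m)$ induced by $\CC^m_{\d} \subseteq \CC^m$ is exactly the identification of $q$-planes contained in the hyperplane $\CC^m_{\d}$ (note $\d \ne 0$, so $\dim \CC^m_{\d} = m-1 \ge q$ and the target Grassmannian is a genuine subvariety), so that the right-hand set is literally the subset of $\Gr(q,\CC^m)$ cut out by the condition just verified. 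With these remarks the equality of sets is immediate.
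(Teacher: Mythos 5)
Your proof is correct and follows exactly the route the paper intends: the corollary is stated without proof precisely because it is an immediate consequence of the preceding Lemma, which is the only ingredient you use. Your additional bookkeeping remarks (basis-independence and the identification of $\Gr(q,\CC^m_{\d})$ as the subvariety of $q$-planes inside the hyperplane $\CC^m_{\d}$) are sound and consistent with the paper's standing assumption $m \ge q+1$.
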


\begin{remark}\label{casogeneral1}
 Notice that we have not proved that if $\omega$ is a logarithmic q-form of type $\d$ as in Definition \ref{pollogformsdef1}, then the moduli equations \eqref{moduli} and \eqref{moduli2} are equivalent to the condition $\lambda \in \Gr(q,\CC^m_{\d})$, i.e., $\omega$ globally decomposable  outside  $\D_F = (F=0)$. Proposition \ref{characterizationvectors} only states a partial answer to that problem. However, when $q=2$ the equivalence is in fact true according to Proposition \ref{pollogformsprop1}. It is also remarkable that the case $q>2$ is an open question in \cite[Problem 1]{CL2}.
\end{remark}

\subsection{The logarithmic varieties and the natural parametrization.}\label{param}
Consider again natural numbers $q\in \NN$, $n\in \NN_{>q+1}$, $d,m\in \NN_{\ge q+1}$ and a partition $\d$ of $d$ into $m$ parts.
According to Proposition \ref{characterizationvectors} we denote by $l_q(\d,n)$ the algebraic set of logarithmic q-forms of type $\d$ which are globally decomposable outside its corresponding divisor $\D_F = (F=0)$, and whose projective class determine a codimension $q$ foliation.  
Actually, this set also coincides with the image of the multi-linear map
\begin{align*}
\phi: (\CC^m_{\d})^q\times \prod_{i=1}^m S_{d_i}&\longrightarrow H^0(\P^n,\Omega^q_{\P^n}(d))\\
\nonumber ((\lambda^1,\dots,\lambda^q),(F_1,\dots,F_m))&\longmapsto \sum_{I:|I|=q} (\lambda^1\wedge \dots \wedge \lambda^q)_{I}\hat{F_{I}} dF_I.
\end{align*}

\begin{definition}\label{defnaturalparametrization} We introduce the following map as our natural parametrization:
\begin{align}\label{naturalparametrization}
& \rho:\PP_q(\d) :=\Gr(q,\CC^m_{\d})\times \prod_{i=1}^m \P(S_{d_i}) \xdashrightarrow{\hspace*{0.8cm}}  \,\,\F_q(d,\P^n)\subset\P H^0(\P^n,\Omega^q_{\P^n}(d))\\
&\nonumber (\lambda=[(\lambda^1\wedge \dots\wedge \lambda^q)],\underline{F}=([F_1],\dots,[F_m]))\longmapsto \hspace{0.3cm}[\omega] = \left [ \sum_{I:|I|=q} \lambda_{I}\hat{F_{I}} dF_I \right ].
\end{align}
\end{definition}

From now on and when there is no confusion, we avoid the notation $[\,\,]$ for the corresponding projective classes of the elements involved in the definition
of $\rho$.

\begin{definition}
We define the logarithmic variety $\L_q(\d,n)$ as the Zariski closure of the image of 
$\rho$, i.e., $\L_q(\d,n) = \overline{\im \rho}\subset \F_q(d,\P^n).$
\end{definition}

\begin{remark} \label{remarklogvarieties}
a) $\rho$ is only a rational map and so it is not well defined on the whole space of parameters $\PP_q(\d)$. In particular, on the parameters which give rise to forms that
vanish completely (base locus of the parametrization). 

\noindent b)The space $\PP_q(\d)$ is an algebraic irreducible projective variety of dimension 
$ \sum_{i=1}^m \binom{n+d_i}{d_i} - m + q(m-1-q).$

\noindent c) Notice that the image of $\rho$ also coincides with 
$\P l_q(\d,n)$. 

\noindent d) The space $\L_q(\d,n)$ is a projective irreducible subvariety of $\F_q(d,\P^n)$. 
\end{remark}

With some conditions among $\d$, we will prove that $\L_2(\d,n)$ is an irreducible component of the space $\F_2(d,\P^n)$. From now on, in general, we assume $q=2$.

For our prompt purposes we need to assume some generic conditions on the parameters in $\PP_2(\d)$.   
The polynomials $\{F_i\}$ and the constants $\{\lambda_{ij}\}$ are general according to the following definition.

\begin{definition}\label{generic}
We take the non-empty algebraic open subset of $\PP_2(\d)$ defined by
\begin{multline*}
\U_2(\d) = \{(\lambda,\underline{F})\in \PP_2(\d) : \,\lambda_{ij}\ne 0, \hspace{0.2cm} \lambda_{ij}  - \lambda_{ik} + \lambda_{jk}  \ne 0,\,   \lambda_{ij}  - \lambda_{ik} - \lambda_{jk}  \ne 0 \\ 
\,\mbox{and} \,
F_1,\dots,F_m \,\,\mbox{are smooth irreducible with normal crossings} \}. \nonumber
 \end{multline*}
 
\end{definition}

\begin{remark}
 It follows from Corollary \ref{jouanoloulemmasnc} that $\U_2(\d)$ does not intersect the base locus of $\rho$.
\end{remark}

A more detailed analysis of the base locus of $\rho$, including a description of its irreducible components and scheme structure will be
pursued in \cite{CGM2}.

\subsection{The Zariski tangent space of $\mathcal{F}_2(d,\mathbb{P}^n)$.}\label{modulizariskitangent}
For a scheme $X$ and a point $x \in X$ we
denote by $\T_x X$ the Zariski tangent space of $X$ at $x$. Remember that for a classical projective space $X= \P\mathbb{V}$, 
associated to a finite dimensional vector space $\mathbb{V}$,  
it is common to identify its Zariski tangent space at a given point $x = \pi(v)$ with
$$\T_{x}X = \mathbb{V}/_{\langle v \rangle}.$$
Also recall that $\F_2(d,\P^n)$ is the closed subscheme of the projective space $\P(H^0(\P^n,\Omega^2_{\P^n}(d)))$ defined by equations \eqref{moduli} 
and \eqref{moduli3}. With a slight abuse of notation, the Zariski tangent space $\T_{\omega}\F_2(d,\P^n)$
can be represented by the forms $\alpha \in H^0(\P^n,\Omega^2_{\P^n}(d))/_{\langle \omega \rangle}$ such that
 \begin{align*}
 (\omega + \epsilon \alpha )\wedge (\omega + \epsilon \alpha )&=0 \hspace{1cm}  \mbox{and} \\
 (i_v\omega+ \epsilon i_v\omega)\wedge (d\omega + \epsilon d\alpha)&=0  \hspace{0.5cm}\hspace{0.2cm}\forall v \in \CC^{n+1}\,\mbox{,}  \,\,\mbox{with} \hspace{0.2cm}\epsilon ^2=0.
 \end{align*}

\begin{remark}
 If we fix an element $\omega \in \F_2(d,\P^n)$, then a simple calculation shows that $\alpha\in  H^0(\P^n,\Omega^2_{\P^n}(d))/_{\langle \omega \rangle}$
 belongs to the Zariski tangent space of $\F_2(d,\P^n)$ at $\omega$ if and only if it fulfills the following equations:
\begin{align}
\alpha \wedge \omega &= 0  \label{perturbation}\\
(i_v\omega \wedge d\alpha) + (i_v\alpha \wedge d\omega)&=0  \hspace{0.5cm}  \forall v \in \CC^{n+1}. \label{perturbation2}
\end{align}
We refer to the first equation as the \textit{decomposability perturbation equation} and to the second one as the \textit{integrability perturbation equation}.
In conclusion, we have
$$\T_{\omega}\F_2(d,\P^n) = \{\alpha \in H^0(\P^n,\Omega^2_{\P^n}(d))/_{\langle \omega \rangle} :  \alpha \,\, \mbox{satisfies} \,\, \eqref{perturbation} \,\, \mbox{and} \,\, \eqref{perturbation2}\}.$$
 \end{remark}

\subsection{The derivative of the natural parametrization.}\label{The derivate of the natural parametrization}
One of our main purposes is to show that the derivative of $\rho$ is surjective (see for instance Section \ref{mainresults}). In order to carry out this plan we need to set some notation.

As it was explained in the previous section, for every $e\in \NN$ we have
$$\T_{\pi(F)}\P(S_e) = S_e/_{\langle F \rangle}.$$ 
From now on we will write $F'$ for a general element of this space.

Moreover, let $\lambda = [\lambda^1\wedge \lambda^2]$ be an element in the grassmannian space $\Gr(2,\CC^m_{\d})$.
The Zariski tangent space of $\Gr(2,\CC^m_{\d})$ at $\lambda$ has a natural identification with the space of antisymmetric vectors 
$\lambda' \in \bigwedge^2 (\CC^m_{\d})/_{\langle \lambda \rangle}$ such that
\begin{equation}\label{tangrasseq}
\lambda' \wedge \lambda = 0.
 \end{equation}
Even more, the vectors $\lambda'$ satisfying the above condition can be written as
\begin{equation}\label{tangrass}
\lambda' = (\lambda')^1\wedge \lambda^2 + \lambda^1 \wedge (\lambda')^2,
\end{equation}
where $(\lambda')^1$,$(\lambda')^2 \in \CC^m_{\d}/_{\langle \lambda^1, \lambda^2 \rangle}$. 
In general, we will write $\lambda'$ for the elements of 
$\T_{\lambda}\Gr(2,\CC^m_{\d})$, i.e., for the vectors $\lambda' \in  \bigwedge^2 (\CC^m_{\d})/_{\langle \lambda \rangle}$ fulfilling equation \eqref{tangrasseq} or equivalently 
of the type \eqref{tangrass}.

\begin{remark}\label{remarkderivateimage}
Let $(\lambda^1\wedge \lambda^2, (F_i)_{i=1}^m)=(\lambda,\underline{F}) \in \PP_2(\d)$ be a fixed parameter in the domain of $\rho$ and write
 $\omega= \rho((\lambda,\underline{F})) = \sum_{i\ne j} \lambda_{ij} \hat{F_{ij}} dF_i\wedge dF_j$.
From the multilinearity of $\phi$ and the definition of $\rho$, we obtain the following formula for the derivative of $\rho$:
\begin{multline}
d\rho(\lambda, \underline{F}): \T_{\lambda}\Gr(2,\CC^{m}_{\d}) \times \prod_{i=1}^m \T_{F_i}\P S_{d_i} \longrightarrow \T_{\omega}\F_2(d,\P^n) \\
(\lambda',(F_1',\dots,F_m')) \longmapsto \sum_{i\ne j} \lambda'_{ij} \hat{F}_{ij} dF_i\wedge dF_j \,  \\ 
+ \sum_{i\ne j\ne k} \lambda_{ij} \hat{F}_{ijk}F_k' dF_i\wedge dF_j + 2\sum_{i\ne j} \lambda_{ij} \hat{F}_{ij} dF_i'\wedge dF_j. 
\label{derivateimage}
\end{multline}
We will use the following notation for the forms in the image of the partial derivatives of $\rho$:
\begin{align*}
 \alpha_1 &= d\rho(\lambda, \underline{F})(\lambda',(0,\dots,0)) \,\,\, \mbox{with}\,\, \lambda'\in \T_{\lambda}\Gr(2,\CC^m_{\d}),  \\
  \alpha_2 &= d\rho(\lambda, \underline{F})(0,(F_1',\dots,F_m')) \,\,\, \mbox{with}\,\, F'_i\in S_{d_i}/_{ \langle F_i \rangle}.  
\end{align*}
\end{remark}

\begin{remark}
 For each $(\lambda,\underline{F})\in \PP_2(\d)$ we have an inclusion of vector spaces
 $\im(d\rho(\lambda,\underline{F})) \subset \T_{\omega}\F_2(d,\P^n)$.
 %= \{\alpha \in H^0(\P^n,\Omega^2_{\P^n}(d))/_{\langle \omega \rangle} :  \alpha \,\, \mbox{satisfies} \,\, \eqref{perturbation} \,\, %\mbox{and} \,\, \eqref{perturbation2}\}.$$
 In particular, the forms like $\alpha_1$ and $\alpha_2$ satisfy the perturbation equations \eqref{perturbation} and \eqref{perturbation2}.
\end{remark}

 Now, we want to distinguish the two introduced partial derivatives. With the notation of Section \ref{basicdefs} (see our definitions in \eqref{filtration}), we will see they vanish on different strata $X_{\D_F}^k$ associated to 
the divisor $\D_F =(F=0)= \bigcup_{i=1}^m X_i=(F_i=0)$. 
Under the assumptions of Definition \ref{generic}, each $X_I$ is a smooth complete intersection
of codimension $|I|$. Thus, the stratum $X^k_{\D_F}$ has codimension $k$ and is singular along
$X^{k+1}_{\D_F}$. We have the following results concerning these spaces.

\begin{proposition}\label{generator}
For every $m>1$ and $k\in \{1,\dots,m\}$, the saturated homogeneous ideal $I^{(k)}$ associated to $X_{\D_F}^k$ is generated by $\{ \hat{F_J} \}_{|J|=k-1}.$ Hence we have a surjective map $\bigoplus_{|J|=k-1} \O_{\P^n}(-\hat{d}_J) \twoheadrightarrow \I_{X_{\D_F}^k}$, where $\I_{X_{\D_F}^k} \subset \O_{\P^n}$ denotes the ideal sheaf  of regular functions vanishing on each stratum, and $\hat{d}_J= \sum_{j\notin J}d_j$.
\end{proposition}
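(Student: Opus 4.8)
The plan is to identify $I^{(k)}$ with the intersection ideal $\bigcap_{|L|=k}(F_l : l\in L)$ and then to show that this intersection is generated by the $\hat F_J$ with $|J|=k-1$. The containment $\subseteq$ is essentially free: if $|J|=k-1$ then for every $L$ with $|L|=k$ we have $L\not\subseteq J$, so there is $l\in L\setminus J$; since $l\notin J$ the factor $F_l$ divides $\hat F_J$, whence $\hat F_J\in(F_l)\subseteq(F_l:l\in L)$. Under the hypotheses of Definition \ref{generic} each $X_L$ is a smooth complete intersection, so $(F_l:l\in L)$ is its saturated radical ideal; intersecting over $|L|=k$ yields the saturated radical ideal of the reduced scheme $X_{\D_F}^k=\bigcup_{|L|=k}X_L$, i.e. exactly $I^{(k)}$. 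It therefore suffices to prove the reverse inclusion $I^{(k)}=\bigcap_{|L|=k}(F_l:l\in L)\subseteq(\hat F_J:|J|=k-1)$. Once this is established, $(\hat F_J)$ is automatically saturated, since it coincides with a finite intersection of saturated ideals, and the asserted surjection $\bigoplus_{|J|=k-1}\O(-\hat d_J)\twoheadrightarrow\I_{X_{\D_F}^k}$ follows at once by sheafifying.

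For the reverse inclusion I would argue by a nested induction on $m$ and $k$, the core device being reduction modulo one polynomial; here it is convenient to prove the statement for an SNC family of divisors on any smooth projective variety, which is the natural setting in which the induction closes. The base cases $k=1$ (both ideals equal $(F)$) and $m=k$ (both equal $(F_1,\dots,F_m)$) are direct. For the inductive step, take $g\in I^{(k)}=\bigcap_{|L|=k}(F_l:l\in L)$ and pass to $\bar S=S/(F_1)$. Using the inclusion $I^{(k-1)}\subseteq I^{(k)}$, the image $\bar g$ lands in the level-$(k-1)$ intersection of the SNC family $\{\bar F_2,\dots,\bar F_m\}$ on $X_1$; by the inductive hypothesis applied on $X_1$ this writes $\bar g$ in terms of the generators $\widehat{\bar F}_{J'}=\prod_{l\in\{2,\dots,m\}\setminus J'}\bar F_l$ with $|J'|=k-2$. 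Now $\widehat{\bar F}_{J'}$ lifts to the polynomial $\prod_{l\in\{2,\dots,m\}\setminus J'}F_l=\hat F_{J'\cup\{1\}}$, and $|J'\cup\{1\}|=k-1$, so lifting gives $g=\sum_{J'}b_{J'}\hat F_{J'\cup\{1\}}+cF_1$ for some $c\in S$.

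Since each $\hat F_{J'\cup\{1\}}$ already lies in the target ideal, the remainder $cF_1$ lies in $\bigcap_{|L|=k,\,1\notin L}(F_l:l\in L)$. At this point SNC transversality enters decisively: because $X_1$ meets each $X_L$ with $1\notin L$ properly, $F_1$ is a nonzerodivisor modulo every such $(F_l:l\in L)$, so $c$ itself lies in the level-$k$ intersection of the smaller family $\{F_2,\dots,F_m\}$. The induction on $m$ then expresses $c$ through the products $\prod_{l\in\{2,\dots,m\}\setminus\tilde J}F_l$ with $|\tilde J|=k-1$, $\tilde J\subseteq\{2,\dots,m\}$, and multiplying by $F_1$ turns each of these into $\hat F_{\tilde J}$ with $|\tilde J|=k-1$. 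Collecting terms gives $g\in(\hat F_J:|J|=k-1)$, completing the step.

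I expect the main obstacle to be the bookkeeping of this nested induction together with the regular-sequence facts it rests on. Concretely, one must check that the index sizes and degrees line up so that the products produced after reducing modulo $F_1$ are exactly the $\hat F_J$ with $|J|=k-1$ and not off by a factor, and one must justify from the normal-crossings hypothesis that each $(F_l:l\in L)$ is unmixed with no embedded primes and that the relevant $F_i$ are nonzerodivisors on the successive quotients; these are precisely the points where SNC is indispensable, and the reverse inclusion genuinely fails without it. The same computation can also be read locally: at a point lying on the hypersurfaces indexed by a set of size $r$, an SNC-adapted coordinate change turns the $\hat F_J$ into unit multiples of the squarefree monomials of degree $r-k+1$, and the identity to be verified becomes the standard description of the Stanley--Reisner ideal of the $(r-k-1)$-skeleton of a simplex, which furnishes an alternative route to the surjectivity at the level of ideal sheaves.
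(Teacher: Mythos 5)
Your argument is correct in substance, but it follows a genuinely different induction scheme from the paper's. The paper fixes $k$ and the ambient $\P^n$ throughout and inducts on $m$ alone: it splits $\bigcap_{|L|=k}\langle F_l : l\in L\rangle$ according to whether $m\in L$, invokes the inductive hypothesis only at level $(m-1,k)$ to write $P=\sum_J H_J\hat{F}_{J\cup\{m\}}$, and then shows each coefficient satisfies $H_{J_0}\in\langle F_{j_1},\dots,F_{j_{k-1}},F_m\rangle$ by passing to the quotient ring $S/\langle F_{j_1},\dots,F_{j_{k-1}},F_m\rangle$ and using that it is an integral domain (irreducibility of smooth complete intersections); redistributing factors of the $F_i$ closes the step. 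Your new move, absent from the paper, is restriction to the hypersurface $X_1$: a nested induction on $(m,k)$ in which reduction mod $F_1$ plus the case $(m-1,k-1)$ on $X_1$ peels off the terms $\hat{F}_{J'\cup\{1\}}$, after which the remainder $cF_1$ is absorbed using that $F_1$ is a nonzerodivisor modulo each $\langle F_l : l\in L\rangle$ with $1\notin L$ (unmixedness of complete intersection ideals) together with the case $(m-1,k)$. What your route buys: it needs no irreducibility or primality input, only smoothness, transversality and Cohen--Macaulayness --- which matters because the restricted divisors $X_1\cap X_j$ need not be irreducible --- and your closing Stanley--Reisner remark gives the ideal-sheaf surjection essentially locally. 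What it costs: the statement must be proved for SNC families on the ambients the induction actually visits, and here ``any smooth projective variety'' overshoots; saturatedness and unmixedness of complete intersection ideals in the homogeneous coordinate ring require the ambient to be arithmetically Cohen--Macaulay (they can fail without projective normality). Since the only ambients that occur are the complete intersections $X_I$, which are ACM, your induction does close once it is phrased for that class. Two minor points: the appeal to $I^{(k-1)}\subseteq I^{(k)}$ is unnecessary, since for each $L\ni 1$ the membership $g\in\langle F_l : l\in L\rangle$ reduces mod $F_1$ directly into the ideal generated by the remaining $k-1$ polynomials; and both your proof and the paper's implicitly assume the strata involved are nonempty of expected codimension (i.e.\ $k\le n$), which is harmless in the range where the proposition is applied.
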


\begin{proof}
We will proceed by induction on the total number of polynomials $m$, where $m\ge k$ is required. The base case is trivial, left to the reader. It is also clear that
$I^{(k)}= \bigcap_{J:|J|=k} \langle F_{j_1},\dots,F_{j_k}\rangle,$
and then we are reduced to prove that
$$\bigcap_{J\subset \{1,\dots,m\}:|J|=k}\langle F_{j_1},\dots,F_{j_k}\rangle = \langle \hat{F_J} \rangle_{J\subset \{1,\dots,m\}:\atop|J|=k-1}.$$
One inclusion is always clear and does not require the inductive argument. Next, if we separate in the left term the multi-indexes of the intersection
which do not contain $m$ and use the inductive hypothesis, then we obtain
\begin{equation}\label{eqideal}
I^{(k)} = \bigcap_{J\subset \{1,\dots,m-1\}:\atop|J|=k-1} \langle \hat{F}_{J\cup\{m\}}\rangle \cap \langle F_{j_1},\dots,F_{j_{k-1}},F_m \rangle.
\end{equation}
As a consequence, for every $P \in I^{(k)}$ there exists a family of polynomials $\{H_J\}_{J:|J|=k-1}$ such that
\begin{equation}\label{eqideal2}
P = \sum_{J:|J|=k-1} H_J \hat{F}_{J\cup \{m\}}.
\end{equation}
For each $J_0 = \{j_1,\dots,j_{k-1}\} \subset \{1,\dots,m-1\}$  the class $[P]$ in the quotient ring 
$\bigslant{\CC[z_0, \dots,z_n]}{\langle F_{j_1},\dots,F_{j_{k-1}},F_m\rangle}$ is
equal to zero. This is a consequence of formula \eqref{eqideal}. Hence we have
$[H_{J_0}][\hat{F}_{J_0\cup \{m\}}]=0.$
Finally, since the ring is integral and 
$F_l \notin \langle F_{j_1},\dots,F_{j_{k-1}},F_m\rangle$, for every index $l\notin J_0$ distinct of $m$, we get $[H_{J_0}]=0$. This fact applied to equality \eqref{eqideal2} allows us to show the other needed inclusion.
\end{proof}

\begin{remark}
 If $\omega =\rho(\lambda,\underline{F})$ is a projective logarithmic $q$-form of type $\d$, then $X^{q+1}_{\D_F}$ is contained in its singular set. 
\end{remark}
 
 \begin{remark}\label{remarkint}
Assume  $m>3$. With notation as in Remark \ref{remarkderivateimage}, notice that  $\alpha_1$ vanishes on $X_{\D_F}^3$ and $\alpha_2$ on $X_{\D_F}^4$, and so every element $\alpha$ in the image of $d\rho$ vanishes on $X_{\D_F}^4$.
\end{remark}

\section{Infinitesimal stability of a generic logarithmic 2-form.}\label{infstability2logforms}
\subsection{Main results.}\label{mainresults}

Our desired result can be summarized in the next theorem. For some technical reasons explained later, the vector of degrees $\d$ is assumed to be 2-balanced (see Definition \ref{defbalanced}).

\begin{theorem}\label{main}
Fix natural numbers $n,m\in \NN_{>3}$, and a 2-balanced vector of degrees $\textbf{d}=(d_1,\dots,d_m)$ with $d=\sum_i d_i$. The variety
$\L_2(\textbf{d},n)$ is an irreducible component of the moduli space $\F_2(d,\P^n)$. 
Furthermore, the scheme $\F_2(d,\P^n)$ is generically reduced along this component, in particular at the points of $\rho(\U_2(\d)$).
\end{theorem}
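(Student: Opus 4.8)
The plan is to deduce Theorem \ref{main} from the surjectivity of the derivative $d\rho$ at a generic point, via a standard comparison of dimensions between the logarithmic variety and the Zariski tangent space of the ambient moduli scheme. Concretely, I would first isolate the following elementary principle: if $\omega \in \L_2(\d,n)$ is a general point and one can show the equality $\dim \T_\omega \F_2(d,\P^n) = \dim \L_2(\d,n)$, then $\L_2(\d,n)$ is an irreducible component of $\F_2(d,\P^n)$ and the scheme is generically reduced along it. Indeed, since $\L_2(\d,n)$ is irreducible it is contained in some irreducible component $Z$ of $\F_2(d,\P^n)$, and one always has the chain $\dim \L_2(\d,n) \le \dim Z \le \dim_\omega \F_2(d,\P^n) \le \dim \T_\omega \F_2(d,\P^n)$. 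The assumed equality collapses this chain: $Z$ and $\L_2(\d,n)$ are then irreducible of the same dimension with $\L_2(\d,n)\subseteq Z$, forcing $Z = \L_2(\d,n)$; and the coincidence $\dim_\omega \F_2(d,\P^n) = \dim \T_\omega \F_2(d,\P^n)$ says exactly that $\omega$ is a smooth (hence reduced) point of $\F_2(d,\P^n)$. As this holds for $\omega$ ranging over a dense open set, in particular over $\rho(\U_2(\d))$, generic reducedness follows.

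Second, I would reduce the required dimension equality to the surjectivity of the derivative. Because $\rho$ is dominant onto $\L_2(\d,n)=\overline{\im \rho}$ and we work in characteristic zero, generic smoothness gives $\dim \im\big(d\rho(\lambda,\underline{F})\big) = \dim \L_2(\d,n)$ for a general parameter $(\lambda,\underline{F})\in \U_2(\d)$. On the other hand, by construction $\im\big(d\rho(\lambda,\underline{F})\big)\subseteq \T_\omega \F_2(d,\P^n)$. Hence the target equality $\dim \T_\omega \F_2(d,\P^n) = \dim \L_2(\d,n)$ is equivalent to the statement that this inclusion is an equality, i.e. that $d\rho(\lambda,\underline{F})$ is surjective onto $\T_\omega \F_2(d,\P^n)$. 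This is precisely Proposition \ref{surjectivity}, and everything rests on it.

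Third, to prove surjectivity I would take an arbitrary tangent vector $\alpha \in \T_\omega \F_2(d,\P^n)$ --- that is, a class in $H^0(\P^n,\Omega^2_{\P^n}(d))/\langle\omega\rangle$ satisfying the decomposability perturbation equation \eqref{perturbation} and the integrability perturbation equation \eqref{perturbation2} --- and exhibit it as $\alpha_1 + \alpha_2$ in the notation of \eqref{derivateimage}. The engine is the logarithmic/residue structure together with the stratification \eqref{filtration} of $\D_F$. Writing $\omega = F\eta$ with $\eta = \eta_1\wedge\eta_2$ globally decomposable, the equation $\alpha\wedge\omega = 0$ forces $\alpha$, as a meromorphic form $\alpha/F$, to lie in the exterior ideal generated by $\eta_1,\eta_2$, so that $\alpha/F = \eta_1\wedge\beta_1 + \eta_2\wedge\beta_2$ for suitable meromorphic $1$-forms $\beta_i$ with controlled poles along $\D_F$. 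I would then restrict $\alpha$ successively to the smooth strata $X_{\D_F}^k$, using Proposition \ref{generator} and Corollary \ref{jouanoloulemmasnc} to read off the data: the residues along the codimension-two strata recover a candidate Grassmannian variation $\lambda'$ (producing $\alpha_1$), while the remaining, higher-pole contributions supported on the divisors $X_i$ recover the variations $F_i'$ (producing $\alpha_2$). The $2$-balanced hypothesis on $\d$ enters here to guarantee the divisibility and degree bounds needed for these division arguments to close up; it prevents any single polynomial or pair $F_i,F_j$ from carrying too much of the degree, so the pole-order reductions are forced to be of logarithmic type. Finally the integrability equation \eqref{perturbation2}, together with the identity $d\omega = \tfrac{dF}{F}\wedge\omega$, pins down the remaining freedom and shows $\alpha - \alpha_1 - \alpha_2 \equiv 0 \pmod{\langle\omega\rangle}$.

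The main obstacle is unquestionably this last step, the surjectivity in Proposition \ref{surjectivity}. The perturbation equations only give the decomposition of $\alpha$ up to meromorphic terms with a priori higher-order poles, and the delicate part is to systematically lower these pole orders down to the logarithmic range while simultaneously separating the $\lambda'$-contribution from the $F_i'$-contribution across the whole stratification. This is where the bookkeeping becomes heavy --- it is exactly the content of the multi-step argument (Steps 1--7) announced after the statement --- and it is also the point at which the combinatorics is tied to $q=2$ and resists a direct extension to higher codimension.
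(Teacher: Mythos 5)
Your first two paragraphs are correct and coincide with the paper's own treatment of the theorem proper: the paper deduces Theorem \ref{main} from Proposition \ref{surjectivity} by exactly the dimension-chain and generic-smoothness arguments you describe, simply deferring them to \cite{CPV}, \cite{CP} and \cite{CGM}. (One small repair: smoothness \emph{at every point} of $\rho(\U_2(\d))$ does not follow from knowing $\dim \im\, d\rho = \dim \L_2(\d,n)$ at a \emph{general} parameter; you get it because Proposition \ref{surjectivity} holds at every point of $\U_2(\d)$, while lower semicontinuity of rank together with generic smoothness bounds $\dim \im\, d\rho(\lambda,\underline{F}) \le \dim \L_2(\d,n)$ everywhere.)

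The genuine gap is your third paragraph. Proposition \ref{surjectivity} is the entire mathematical content of the theorem, and what you offer is a strategy sketch whose pivotal claims are unproved and, in one place, not even well posed. First, $\alpha\wedge\omega=0$ does imply pointwise, off the singular set, that $\alpha/F$ lies in the exterior ideal $(\eta_1,\eta_2)$, but promoting this to a global identity $\alpha/F=\eta_1\wedge\beta_1+\eta_2\wedge\beta_2$ with controlled poles is a De Rham--Saito division problem requiring depth/codimension hypotheses (this is precisely what the paper's Lemma \ref{divisionlemma} is engineered for, on the strata), and your sketch does not address it. Second, $\alpha/F$ has simple poles but is not a priori a \emph{logarithmic} form ($d(\alpha/F)$ may have double poles), so ``its residues along the codimension-two strata'' are not defined; the paper instead restricts $\alpha$ itself to the strata $X^k_{\D_F}$ and works with polynomial divisions (Lemmas \ref{divisionlemma}, \ref{vanishinglemma} and \ref{fundamentallemma}). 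Third, your sketch inverts where the two perturbation equations act: in the paper, Steps 1--6 are driven almost entirely by the integrability equation \eqref{perturbation2} restricted to strata --- that is how the compatibility relations producing the $F_i'$ arise, and how 2-balancedness enters, namely as exact degree counts killing residual polynomial terms in Steps 5--6 --- while the decomposability equation \eqref{perturbation} is used only once, in Step 7, to conclude $\lambda'\wedge\lambda=0$, i.e.\ that $\lambda'$ is tangent to the Grassmannian. Since you yourself state that everything rests on the surjectivity and defer its proof to the very Steps 1--7 you have not supplied, the proposal correctly reduces the theorem but does not prove it.
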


The proof of the above theorem is implied by the surjectivity of the derivative of the natural parametrization 
$\rho$ (Proposition \ref{surjectivity} below), combined with some arguments of scheme theory.
This method is the same as the one used to prove \cite[Theorem 2.1]{CPV} and 
\cite[Theorem 1]{CP}. Moreover, it was also used in \cite[Theorem 8.2]{CGM} for an alternative algebraic proof of the stability of projective logarithmic one forms. 
As a consequence, we will only present a proof of the corresponding proposition.

\begin{proposition}\label{surjectivity}
 With the notation of Theorem \ref{main}, let 
 $(\lambda,\underline{F}) \in \U_2(\d)$ and 
 $\omega  = \rho(\lambda,\underline{F})$. Then the derivative
  $$d\rho(\lambda, \underline{F}): \T_{\lambda}\Gr(2,\CC_{\d}^{m}) \times \prod_{i=1}^m \T_{F_i}\P S_{d_i} \longrightarrow \T_{\omega}\F_2(d,\P^n) $$
  is surjective.
\end{proposition}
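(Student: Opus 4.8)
The plan is to take an arbitrary class $\alpha \in \T_\omega\F_2(d,\P^n)$, represented by a polynomial $2$-form satisfying the decomposability perturbation equation \eqref{perturbation} and the integrability perturbation equation \eqref{perturbation2}, and to exhibit it as $d\rho(\lambda,\underline{F})(\lambda',(F_i'))$ for a suitable tangent vector of the source. Since $(\lambda,\underline{F})\in\U_2(\d)$, the divisor $\D_F$ has simple normal crossings and $\omega=F\eta$ with $\eta=\eta_1\wedge\eta_2$ globally decomposable; the natural starting point is therefore to analyze $\alpha$ through its behavior along the strata $X^k_{\D_F}$ and through the residue formalism of Proposition \ref{propslogforms}.

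First I would exploit the decomposability equation $\alpha\wedge\omega=0$. Since $\omega=F\,\eta_1\wedge\eta_2$ and $F\ne0$ on a dense open set, this gives $\alpha\wedge\eta_1\wedge\eta_2=0$, forcing $\alpha=\eta_1\wedge\mu+\eta_2\wedge\nu$ for meromorphic $1$-forms $\mu,\nu$ on $\{\omega\ne0\}$. Comparing this local shape with the two families of perturbations displayed in \eqref{derivateimage}, one sees that the $\lambda$-perturbation $\alpha_1$ is responsible for the infinitesimal variation of the iterated residues $\lambda_{ij}=Res_{ij}\,\eta$ along the codimension-two strata $X_{ij}$, while the $F$-perturbation $\alpha_2$ is responsible for the infinitesimal motion of the hypersurface components $X_i$. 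The genericity conditions defining $\U_2(\d)$, namely the nonvanishing of the $\lambda_{ij}$ and of the combinations $\lambda_{ij}\pm\lambda_{ik}\pm\lambda_{jk}$, should enter precisely here, guaranteeing that the linear systems which recover $(\lambda',F')$ from the residue data of $\alpha$ are solvable.

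I would then proceed by successive approximation. From $\alpha$ one reads off the induced variation of the double residues and of the defining polynomials $F_i$, assembles the corresponding element $\alpha_1+\alpha_2\in\im d\rho$---this is where Proposition \ref{generator} controls divisibility by the generators $\hat{F}_J$ of the ideals $I^{(k)}$, and Remark \ref{remarkint} controls vanishing on $X^4_{\D_F}$---and replaces $\alpha$ by the remainder $\tilde\alpha=\alpha-\alpha_1-\alpha_2$. By construction $\tilde\alpha$ again represents a class in $\T_\omega\F_2(d,\P^n)$, now with vanishing residues along every stratum, so it must vanish to higher order along $\D_F$. The task of the remaining steps is to show that such a $\tilde\alpha$ is necessarily a multiple of $\omega$, i.e. $\tilde\alpha\equiv0$ in $H^0(\P^n,\Omega^2_{\P^n}(d))/\langle\omega\rangle$.

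The main obstacle is exactly this final vanishing. Once residues and divisor motions have been stripped off, the integrability perturbation equation \eqref{perturbation2}, together with the identity $d\omega=\tfrac{dF}{F}\wedge\omega$, has to be combined with polynomial division to conclude that the stripped remainder lies in $\langle\omega\rangle$. Here the $2$-balanced hypothesis $d_i+d_j<\sum_{k\ne i,j}d_k$ is indispensable: it keeps the degrees of the cofactors produced by the division positive and forces the relevant Koszul syzygies among the $F_i$, and the cohomology of the complete intersections $X_I$ appearing in Lemma \ref{cohomfiltration}, to behave as needed. The combinatorics of the multi-index sums over four-element sets $\{i,j,k,l\}$ that arise in $\alpha\wedge\omega$ and in \eqref{perturbation2} is what makes this part lengthy, and is the reason the argument must be broken into several independent steps and auxiliary lemmas.
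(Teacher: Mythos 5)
Your outline correctly anticipates the coarse shape of the argument (strip off a piece lying in $\im\, d\rho$, then show the remainder vanishes using \eqref{perturbation2} and the $2$-balanced hypothesis), and it even locates correctly where the genericity conditions of $\U_2(\d)$ and the balancedness must enter. But the central mechanism you propose --- ``reading off the induced variation of the double residues and of the defining polynomials from $\alpha$'' --- is not well defined, and this is a genuine gap rather than a compressible detail. A Zariski tangent vector $\alpha$ is a polynomial twisted $2$-form, and $\alpha/F$ is in general \emph{not} a logarithmic form: $d(\alpha/F)=d\alpha/F - (dF\wedge\alpha)/F^2$ has double poles, so the residue formalism of Proposition \ref{propslogforms} does not apply to it. Asserting that $\alpha$ determines residue variations $\lambda'_{ij}$ and divisor motions $F'_i$ amounts to assuming that the first-order deformation stays logarithmic, which is essentially the statement being proved; as written, the ``successive approximation'' is circular. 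The paper replaces this heuristic by a concrete stratified analysis that your proposal does not supply: one restricts $\alpha$ to the strata $X_{ijk}$, uses \eqref{perturbation2} together with logarithmic vector fields having the $\delta$-property and a Saito-type division lemma (Lemma \ref{divisionlemma}) to get local decompositions $\alpha=\sum \hat F_{ijk}A_{ijk}\,dF_i\wedge dF_j+\varepsilon$ (Proposition \ref{step1}), then proves the compatibility relations among the $B_{ijk}=A_{ijk}/\lambda_{ij}$ on quadruple intersections via an explicit linear system whose solvability uses $\lambda_{ij}-\lambda_{ik}+\lambda_{jk}\ne0$ (Proposition \ref{step2}), and finally glues the local data into honest global polynomials $F'_k$ by a cohomological argument --- the Fundamental Lemma \ref{fundamentallemma}, resting on $H^1$-vanishing for complete intersections --- not by Lemma \ref{cohomfiltration}, which plays no role in the surjectivity proof.

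Two further points. First, your opening move via the decomposability equation $\alpha\wedge\omega=0$ (giving $\alpha=\eta_1\wedge\mu+\eta_2\wedge\nu$ with meromorphic $\mu,\nu$ off the singular set) is pointwise correct but too coarse to be useful: controlling the pole orders of $\mu,\nu$ along $\D_F$ is exactly as hard as the original problem. In the paper's proof the decomposability perturbation equation \eqref{perturbation} is used only once, at the very last step (Proposition \ref{step7}), to show that the recovered $\lambda'$ satisfies $\lambda'\wedge\lambda=0$ and hence is tangent to $\Gr(2,\CC^m_{\d})$; everything else runs on the integrability equation \eqref{perturbation2}. Second, your final step (the stripped remainder lies in $\langle\omega\rangle$) is asserted rather than argued; in the paper this is Steps 4--6, where one writes the remainder as $\sum\hat F_{ij}\beta_{ij}$ via the Vanishing Lemma \ref{vanishinglemma}, extracts the constants $\lambda'_{ij}$, and kills the leftover terms by degree counts that are precisely where the $2$-balanced inequality $d_i+d_j<\sum_{k\ne i,j}d_k$ is consumed. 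Without these three blocks --- the stratified division argument, the cohomological gluing, and the degree-count vanishing --- the proposal is a plan for a proof, not a proof.
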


\begin{proof}
 It will be attained in the following section.
\end{proof}

\begin{remark}
 Most of our definitions and constructions concerning logarithmic q-forms of type $\d =(d_1,\dots,d_m)$ assume $m \ge q+1$. However, the techniques applied in the proof of Proposition \ref{surjectivity} (and hence of Theorem \ref{main}) require $q=2$ and $m > 3$. Notice that the case $m= q+1$ corresponds to rational q-forms, and it is already known that they determine irreducible and generically reduced components of $\F_q(d,\P^n)$ (see \cite{CPV}).
\end{remark}

\subsection{Surjectivity of the derivative of the natural parametrization.}\label{surjectivitysection}

Let us now start with several steps towards the proof of Proposition \ref{surjectivity}.

\subsubsection{Lemmas.} \label{lemmassection}

We keep the generic conditions assumed in Definition \ref{generic}. In particular, we fix homogeneous irreducible polynomials $F_1,\dots, F_m$ with smooth normal crossings.

\begin{remark}
With the normal crossing assumption, the next property is satisfied:

\medskip
$(\ast)$ \textit{for  $I=\{i_1,\dots,i_k\} \subset \{1,\dots,m\}$ and every point $x \in  \C(X_I)-\{0\}  \subset \CC^{n+1} $, we have
$$d_xF_{i_1}\wedge \dots \wedge d_xF_{i_k} \ne 0,$$}
where $\C(X_I)$ denotes the affine cone over $X_I$ and  $d_xF_j$ is the differential of $F_j$ at $x$. 
\end{remark}
  
The following lemma is similar to that described in \cite[Lemma 2.2]{CPV}, and also it is a consequence of 
Saito's Lemma \cite{SA} turned up for our purposes.

\begin{lemma}[Division Lemma]\label{divisionlemma}
Assume $n,m\in \NN_{\ge 3}$, and fix $k\in \{1,2\}$, integers $j$ and $q$ with $1\le j \le q \le n-2$, and $I = \{i_1,\dots,i_q\}\subset \{1,\dots,m\}$. If $\mu \in H^0(\P^n,\Omega^k_{\P^n}(d))$ is a twisted $k$-form of total degree $d$ such that in homogeneous coordinates satisfies
 \begin{equation}\label{muequation}
(\mu \wedge dF_{i_1}\wedge \dots \wedge dF_{i_j})|_{\C(X_I)}  = 0,
\end{equation}
then there exists forms $\{\gamma_r\}_{r=1}^j \subset  H^0(\CC^{n+1},\Omega^{k-1}_{\CC^{n+1}})$, where each $\gamma_r$ is a homogeneous affine form of total degree 
 $d-d_{i_r}$, such that
 $$\left.\mu\right|_{X_I} = \left (\sum_{r=1}^j \gamma_r \wedge dF_{i_r}\right ) \big{|}_{X_I} .$$
 \end{lemma}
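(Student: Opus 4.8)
The plan is to read the desired identity as an equality in the free $\O_{\C(X_I)}$-module $\Omega^k_{\CC^{n+1}}\otimes\O_{\C(X_I)}$, i.e. as a congruence of polynomial forms modulo the ideal $I_{X_I}=\langle F_{i_1},\dots,F_{i_q}\rangle$. Since the $F_{i_r}$ have simple normal crossings and $\dim X_I=n-q\ge 2$ (this is where $q\le n-2$ is used), $X_I$ is a smooth, hence irreducible, complete intersection, $I_{X_I}$ is its prime ideal, and $\C(X_I)$ is Cohen--Macaulay and smooth away from the vertex. Writing $\theta_r:=dF_{i_r}$ and $\theta:=\theta_1\wedge\dots\wedge\theta_j$, the hypothesis \eqref{muequation} says that $\mu$ lies in the kernel of $\wedge\theta$, and the conclusion is that it lies in the image of $(\gamma_r)\mapsto\sum_r\gamma_r\wedge\theta_r$; so what must be shown is exactness, in the appropriate graded degree, of the wedge complex
\[
\bigoplus_{r=1}^{j}\Omega^{k-1}_{\CC^{n+1}}\otimes\O_{\C(X_I)}
\xrightarrow{(\gamma_r)\mapsto\sum_r\gamma_r\wedge\theta_r}
\Omega^{k}_{\CC^{n+1}}\otimes\O_{\C(X_I)}
\xrightarrow{\ \wedge\theta\ }
\Omega^{k+j}_{\CC^{n+1}}\otimes\O_{\C(X_I)}.
\]
A bookkeeping of degrees forces $\deg\gamma_r=d-d_{i_r}$ as claimed, and choosing representatives in $S$ of the resulting classes yields the global forms in $H^0(\CC^{n+1},\Omega^{k-1}_{\CC^{n+1}})$.

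\textbf{Local solvability.} On the smooth locus $U=\C(X_I)\setminus\{0\}$, property $(\ast)$ guarantees that $\theta_1,\dots,\theta_q$, and in particular the sub-family $\theta_1,\dots,\theta_j$, are pointwise linearly independent covectors, hence part of a local coframe. The pointwise division lemma in the exterior algebra of $T^*_xU$ (extend $\theta_1,\dots,\theta_j$ to a basis and expand $\mu_x$: the equation $\mu_x\wedge\theta=0$ kills every basis monomial free of the $\theta_r$) then gives $\mu_x\in(\theta_1,\dots,\theta_j)$. Thus the complex above is exact in the middle as a complex of sheaves on $U$; this is the step for which Saito's lemma \cite{SA}, adapted as in \cite[Lemma 2.2]{CPV}, is the standard tool, and it is elementary linear algebra rather than the crux.

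\textbf{Globalization --- the main obstacle.} The difficulty is to pass from pointwise solvability to a single global, correctly graded solution $(\gamma_r)$. I would let $\mathcal K=\ker(\wedge\theta)$ and $\mathcal B=\im\bigl((\gamma_r)\mapsto\sum_r\gamma_r\wedge\theta_r\bigr)$ be the associated sheaves on $X_I$; the local step says $\mathcal B=\mathcal K$, so that the obstruction to surjectivity on global sections lives in $H^1(X_I,\mathcal Z(t))$, where $\mathcal Z$ is the syzygy sheaf of $\mathcal K$ and $t$ the relevant twist. To annihilate it I would prolong the Koszul complex attached to the independent sections $\theta_1,\dots,\theta_j$ (a regular sequence in the exterior sense, again by $(\ast)$), tensor with $\Omega^\bullet_{X_I}$, and chase the resulting hypercohomology downward, reducing the obstruction to Hodge-type groups $H^p(X_I,\Omega^s_{X_I}(t))$. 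These vanish in the required range by the very complete-intersection cohomology computations used in Lemma \ref{cohomfiltration} (the numbers $h^{p,s}_{X_I}=\delta_{ps}$ together with Bott vanishing after a nonzero twist), the vanishing being available precisely because $\dim X_I=n-q\ge 2$; equivalently, on the affine side, Cohen--Macaulayness of $\C(X_I)$ gives depth $\ge 2$ at the vertex, so the solutions on $U$ extend across it to a graded solution over $S/I_{X_I}$. I expect this cohomological bookkeeping --- tracking the twists through the prolonged Koszul complex and confirming each term lands in the vanishing range --- to be the only substantial part, with the hypothesis $k\in\{1,2\}$ keeping the exterior-algebra combinatorics, and hence the chase, manageable.
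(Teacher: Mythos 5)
Your reduction, degree bookkeeping, and the pointwise step on the smooth locus are fine, but the proof breaks exactly where you yourself locate the difficulty, and the tools you propose there are the wrong ones. You have inverted the role of Saito's lemma: it is not the ``elementary linear algebra'' statement on $U=\C(X_I)\setminus\{0\}$, it is precisely the global, depth-based division theorem over the ring which makes all cohomological patching unnecessary. The paper's proof runs as follows: by property $(\ast)$ the ideal $\A\subset \O_{\C(X_I)}$ generated by the coefficients of $(dF_{i_1}\wedge\dots\wedge dF_{i_j})|_{\C(X_I)}$ in the frame $dz_0,\dots,dz_n$ vanishes only at the vertex, so $\operatorname{depth}(\A)\ge n+1-q\ge 3$; Saito's generalized de Rham lemma then divides $k$-forms for $k\le 2$ directly over $\C(X_I)$, producing homogeneous forms $\tilde{\gamma}_r$ on the cone in one stroke; the only remaining work is to lift the $\tilde{\gamma}_r$ (for $k=2$, their coefficient functions in the $dz_s$-frame) to global objects, which is the surjectivity of the restriction maps $H^0(\P^n,\O_{\P^n}(e))\to H^0(X_I,\O_{X_I}(e))$ for a smooth complete intersection. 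By demoting Saito's lemma to the trivial fiberwise computation, you are forced to rebuild its content by hand, and that is where your argument has gaps.

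Concretely, the globalization sketch contains errors. The sheaves in your complex are restrictions of the \emph{free} ambient modules $\Omega^{\bullet}_{\CC^{n+1}}$, so any obstruction group that arises is built from twists $\O_{X_I}(t)$ (via syzygy sheaves of maps between direct sums of line bundles), not from the intrinsic sheaves $\Omega^s_{X_I}$; the groups $H^p(X_I,\Omega^s_{X_I}(t))$ you aim to reach are not the relevant ones. Moreover, the vanishing you invoke for them, ``Bott vanishing after a nonzero twist,'' is false for general smooth complete intersections (it is a very restrictive property, valid for toric varieties but failing for most hypersurfaces); what is true, and what a corrected chase actually needs, is $H^p(X_I,\O_{X_I}(t))=0$ for $0<p<\dim X_I$ and all $t$ --- the standard complete-intersection vanishing, which is exactly what the paper uses (for its lifting step, not for patching). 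Finally, the exactness of your ``prolonged Koszul complex'' --- i.e., the syzygy structure of the maps $(\gamma_r)\mapsto\sum_r\gamma_r\wedge dF_{i_r}$ in the graded setting, needed at every stage of the chase --- is asserted, never proved; establishing it with depth estimates is precisely the content of the de Rham--Saito lemma you set aside, so as written the central step is either circular or missing. The outline is repairable (constant-rank pointwise exactness on $U$, obstructions in $H^1$ of twists of $\O_{X_I}$, complete-intersection vanishing, extension across the vertex by normality of the cone), but carrying that out amounts to reproving Saito's lemma in this special case, at considerably greater length than the paper's two-step argument.
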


\begin{proof}
Notice that the restricted sheaf $\Omega^1_{\CC^{n+1}}|_{\C(X_I)}$ is an $\O_{\C(X_I)}$-module freely generated on global sections according to: 
$\Omega^1_{\CC^{n+1}}|_{\C(X_I)} = \O_{\C(X_I)} \cdot dz_0|_{\C(X_I)} \oplus \dots \oplus \O_{\C(X_I)} \cdot dz_n|_{\C(X_I)}.$
Moreover, the property  $(\ast)$ of the previous remark implies that the unique singularity of the j-form
$dF_{i_1}|_{\C(X_I)} \wedge \dots \wedge dF_{i_j}|_{\C(X_I)}$
is the point zero. Now, consider the ideal $\A$ generated by the coefficients $\{a_{l_1\dots l_j}\}$ determined by the decomposition
$$(dF_{i_1} \wedge \dots \wedge dF_{i_j})|_{\C(X_I)} = \sum_{0\le l_1 < \dots < l_j \le n} a_{l_1\dots l_j} dz_{l_1}|_{\C(X_I)}\wedge \dots \wedge (dz_{l_j})|_{\C(X_I)}. $$
The depth of $\A$ is greater or equal than three because of the normal crossings hypothesis. 
Hence, we are able to apply Saito's lemma (see \cite{SA}) to divide 1-forms and 2-forms restricted to $\C(X_I)$. From this lemma and equation \eqref{muequation},
we deduce the existence of  homogeneous forms $\tilde{\gamma}_r$ in $H^0(C(X_I),\Omega_{\CC^{n+1}}^{k-1}|_{\C(X_I)})$ of degrees $d-d_{i_r}$ for $r=1,\dots,j$ such that
$$\mu|_{\C(X_I)} = \sum_{r=1}^j \tilde{\gamma}_r \wedge (dF_{i_r}|_{\C(X_I)}).$$
 Next, we need to construct global homogeneous forms $\{\gamma_r\}$ whose restriction to $\C(X_I)$ coincides with $\{\tilde{\gamma}_r\}$. 
When $k=1$, each function $\tilde{\gamma}_r$ can be considered as an element of $H^0(X_I,\O_{X_I}(d-d_{i_r}))$.  
Since each $X_I$ is  a smooth complete intersection, the corresponding restriction map is surjective (see for instance \cite[Exercise 5.5]{H}). Using this fact, we deduce the existence of $\gamma_{k} \in H^0(\P^n,\O_{\P^n}(d-d_{i_k}))$ as wanted.
When $k=2$, we have 
$\tilde{\gamma}_r = \sum_{s=0}^n A^r_s dz_s|_{\C(X_I)},$
where $\{A^r_s\}_{r,s}$ are elements in  $H^0(X_I,\O_{X_I}(d-d_{i_r}-1))$. Finally,
we may apply the previous procedure to each function $A^r_s$, and hence construct homogeneous global affine 1-forms and as claimed.
\end{proof}

\begin{remark}\label{depthcondition}

a)The above proof does not use the fact that the form $\mu$ descends to the projective space, that is $i_R(\mu)=0$. So the conclusion of Lemma \ref{divisionlemma} holds for every homogeneous affine form $\mu\in H^0(\CC^{n+1},\Omega^k_{\CC^{n+1}})$ satisfying equation \eqref{muequation}.

\noindent b) The depth of the ideal $\A$ defined in the previous proof is greater or equal than the dimension
of $\C(X_I)$, which is exactly $n+1-q$.  Hence by an inductive argument, the previous result is in fact true for $k\le  n-q$.
\end{remark}

\begin{remark}
 Let $X$ be an algebraic variety and $Y \xhookrightarrow{i} X$ a subvariety whose sheaf of ideals is denoted by $\mathcal{I}_Y\subset \O_X$. Also let $\mathcal{E}$ be a locally free sheaf on $X$. Taking global sections on the 
 exact sequence
 $$0 \rightarrow \mathcal{E} \otimes \I_Y \rightarrow \mathcal{E} \rightarrow i_*(\mathcal{E}\otimes \O_Y) := \mathcal{E}|_Y \rightarrow 0,$$
 we can identify the global sections of $\mathcal{E}$ vanishing on $Y$ with the elements of $H^0(X,\mathcal{E}\otimes \I_Y)$.
\end{remark}

\begin{lemma}[Vanishing Lemma]\label{vanishinglemma}
Let $\mu \in H^0(\CC^{n+1},\Omega^2_{\CC^n+1})$ be a homogeneous affine 2-form of total degree $d$. If $\mu$ vanishes on the affine cone $\C(X_{\D_F}^k)$ 
for some $k\in \NN_{\ge2}$, then 
$$\mu = \sum_{I:|I|=k-1} \hat{F}_{I} \,\mu_{I}$$
for some homogeneous affine 2-forms $\mu_I$ of  degree $\sum_{i\in I}d_i$.
 \end{lemma}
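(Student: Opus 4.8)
The plan is to reduce the statement to an algebraic fact about the coefficients of $\mu$ and then to invoke Proposition \ref{generator}. First I would fix linear coordinates on $\CC^{n+1}$ and write
\[
\mu = \sum_{0\le s<t\le n} A_{st}\, dz_s\wedge dz_t,
\]
where each $A_{st}\in S_{d-2}$ is homogeneous of degree $d-2$. Since $\Omega^2_{\CC^{n+1}}$ is a free $\O_{\CC^{n+1}}$-module on the basis $\{dz_s\wedge dz_t\}$, the remark preceding the statement identifies the sections vanishing on $Y:=\C(X^k_{\D_F})$ with $H^0(\CC^{n+1},\Omega^2_{\CC^{n+1}}\otimes \I_Y)$. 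Concretely, this means that $\mu$ vanishes on $Y$ if and only if every coefficient $A_{st}$ belongs to the ideal $\I_Y$ of functions vanishing on the cone, i.e. to the saturated homogeneous ideal $I^{(k)}$ of $X^k_{\D_F}$.

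Next I would bring in Proposition \ref{generator}, which gives $I^{(k)}=\langle \hat{F}_I\rangle_{|I|=k-1}$. Thus each coefficient admits a decomposition
\[
A_{st}=\sum_{|I|=k-1}\hat{F}_I\, B^I_{st},
\]
and, since $A_{st}$ and the $\hat{F}_I$ are homogeneous, the $B^I_{st}$ may be taken homogeneous of degree $(d-2)-\deg\hat{F}_I=\sum_{i\in I}d_i-2$ (and equal to $0$ whenever this number is negative). Setting $\mu_I:=\sum_{s<t}B^I_{st}\, dz_s\wedge dz_t$ and interchanging the order of summation then yields
\[
\mu=\sum_{s<t}\Big(\sum_{|I|=k-1}\hat{F}_I\, B^I_{st}\Big)\,dz_s\wedge dz_t=\sum_{|I|=k-1}\hat{F}_I\,\mu_I,
\]
where each $\mu_I$ is a homogeneous affine $2$-form whose coefficients have degree $\sum_{i\in I}d_i-2$, i.e. of total degree $\sum_{i\in I}d_i$, exactly as claimed.

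I do not expect a genuine obstacle: once vanishing is read off at the level of coefficients, the lemma is essentially a formal corollary of Proposition \ref{generator}. The two points that I would check carefully are, first, that ``$\mu$ vanishes on the cone'' really is equivalent to all coefficients lying in $\I_Y$ --- this uses the freeness of $\Omega^2_{\CC^{n+1}}$ together with the identification in the preceding remark --- and, second, the degree bookkeeping guaranteeing that the decomposition of each $A_{st}$ can be chosen homogeneous with the stated degrees. The only nontrivial input, that the ideal of the reduced cone $\bigcup_{|I|=k}\C(X_I)$ coincides with the generated ideal $\langle \hat{F}_I\rangle_{|I|=k-1}$, is precisely the content of Proposition \ref{generator}, so no new argument is needed there.
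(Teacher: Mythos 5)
Your proof is correct and follows essentially the same route as the paper: the paper's own proof is exactly the combination of the remark identifying sections of $\Omega^2_{\CC^{n+1}}$ vanishing on $Y=\C(X_{\D_F}^k)$ with $H^0(\CC^{n+1},\Omega^2_{\CC^{n+1}}\otimes\I_Y)$ and Proposition \ref{generator}, which you have simply made explicit in coordinates. Your coefficient-by-coefficient argument and the degree bookkeeping (taking graded pieces so the $B^I_{st}$ are homogeneous of degree $\sum_{i\in I}d_i-2$) are exactly the details the paper leaves implicit.
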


\begin{proof}
The result is a consequence of Proposition \ref{generator} and the previous remark applied to $X =\CC^{n+1}$, $Y= \C(X_{\D_F}^k)$ and 
$\mathcal{E} = \Omega^2_{\CC^{n+1}}$.
\end{proof}

\begin{lemma}[Fundamental Lemma]\label{fundamentallemma}
Assume $m\in \NN_{>3}$. Consider a family of homogeneous polynomials $\{B_{ijk}\}$ whose indexes are selected on the set $\{1,\dots,m\}$. Suppose $B_{ijk}= B_{jik}$ 
and $\mbox{deg}(B_{ijk})=d_k$. If these polynomials also satisfy the relations
\begin{equation}\label{relations}
B_{ijl}(x) = B_{ikl}(x) = B_{jkl}(x) \hspace{0.5cm}\forall x\in X_{ijkl},
\end{equation}
then for each  $k\in \{1,\dots,m\}$ there exists a polynomial $F_k' \in S_{d_k}$ such that
$$B_{ijk}(x) = F_k'(x)  \hspace{0.5cm}\forall x\in X_{ijk}.$$
\end{lemma}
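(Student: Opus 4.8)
The statement is equivalent to solving, for each fixed index $k$, the simultaneous system of congruences $F_k' \equiv B_{ijk} \pmod{(F_i,F_j,F_k)}$ over all pairs $\{i,j\}\subset\{1,\dots,m\}\setminus\{k\}$, since $(F_i,F_j,F_k)$ is the saturated ideal of the smooth complete intersection $X_{ijk}$. The plan is to first \emph{glue} the local data $\{B_{ijk}\}_{i,j}$ into a single section over the union $Y_k := \bigcup_{i<j,\,i,j\neq k} X_{ijk}$, and then \emph{extend} that section to a global polynomial in $S_{d_k}$.

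For the gluing step I would observe that two pieces $X_{ijk}$ and $X_{i'j'k}$ of $Y_k$ meet along the complete intersection cut out by the indices $\{i,j,i',j',k\}$. When $\{i,j\}$ and $\{i',j'\}$ share an index the overlap is a fourfold stratum, and the relation \eqref{relations} (with the last index equal to $k$) gives directly $B_{ijk}=B_{i'j'k}$ there; when the two pairs are disjoint the overlap is a fivefold stratum, and the agreement follows by transitivity, comparing both $B_{ijk}$ and $B_{i'j'k}$ to the value on an intermediate pair sharing one index with each. Since $m>3$ guarantees that the relevant fourfold strata exist, the $\{B_{ijk}\}$ agree on all overlaps of the closed cover $\{X_{ijk}\}$ of $Y_k$, and hence glue to a well-defined section $g\in H^0(Y_k,\O_{Y_k}(d_k))$.

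For the extension step I would show that the restriction map $S_{d_k}=H^0(\P^n,\O_{\P^n}(d_k))\to H^0(Y_k,\O_{Y_k}(d_k))$ hits $g$. Here Proposition \ref{generator} is the key input: together with the normal-crossing assumption it identifies $\I_{Y_k}$ with the ideal generated by $F_k$ and the products $\hat F_{ik}=\prod_{l\neq i,\,l\neq k}F_l$ (the generators of the second stratum of the sub-arrangement $\{F_i\}_{i\neq k}$). Using this description I would argue by induction on $m$, peeling off one piece at a time: after subtracting a global lift of $B_{i_0 j_0 k}$ for a reference pair, the remaining section vanishes on $X_{i_0 j_0 k}$, so by normal crossings it becomes divisible by the appropriate generators, which lowers both the number of pieces and the degree and thereby feeds the induction. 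The explicit corrections can be taken as multiples of the $F_l$ supported away from the already-matched strata, so that the congruences solved at earlier stages are preserved.

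The main obstacle is precisely this non-interference in the extension step. Unlike the classical Chinese Remainder Theorem, the ideals $(F_i,F_j,F_k)$ are far from pairwise comaximal, so pairwise compatibility of the $B_{ijk}$ does not by itself produce a simultaneous solution; one must exploit the normal-crossing structure, through Proposition \ref{generator}, to arrange the successive corrections so that matching a new congruence does not destroy those already matched. Equivalently, one must verify that the class of $g$ is not obstructed in $H^1(\P^n,\I_{Y_k}(d_k))$, and it is this verification — rather than the gluing — that I expect to be the technical heart of the proof.
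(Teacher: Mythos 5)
Your framing --- glue the $\{B_{ijk}\}$ into a section $g$ over $Y_k=\bigcup_{i<j,\,i,j\neq k}X_{ijk}$, then extend $g$ to a polynomial in $S_{d_k}$ --- is the right one, and your gluing step is correct: the fourfold relations \eqref{relations} plus transitivity through fivefold strata do produce a well-defined $g\in H^0(Y_k,\O_{Y_k}(d_k))$. The gap is the extension step, which you acknowledge but do not close, and it is not a routine verification: it is the entire content of the lemma. Your peeling induction fails as sketched because the pieces of $Y_k$ pairwise intersect (in fourfold or fivefold strata), so there is no such thing as a correction ``supported away from the already-matched strata'': any adjustment that changes the value on a new stratum $X_{i_0j_0k}$ must simultaneously vanish on all previously matched strata, and those strata all meet $X_{i_0j_0k}$ --- this is exactly the non-interference problem, and you give no mechanism for resolving it. Likewise, even granting your identification of $\I_{Y_k}$ via Proposition \ref{generator}, reducing the lemma to the unobstructedness of $g$ in $H^1(\P^n,\I_{Y_k}(d_k))$ only renames the problem: $\I_{Y_k}$ is not a line bundle (it needs all the generators $F_k,\{\hat F_{ik}\}_{i\neq k}$), so no off-the-shelf vanishing theorem applies to it, and no computation of this group is offered.

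The paper closes precisely this gap by never extending over all of $Y_k$ at once; it descends one index at a time, so that every extension happens across a hypersurface inside a smooth complete intersection, where the obstruction group is $H^1$ of a line bundle and vanishes for free. Concretely: fix $j,k$ and set $X=X_{jk}$ (of dimension $n-2\ge 2$); inside $X$ the union $\bigcup_{i\neq j,k}X_{ijk}$ is the single divisor $\D_X=(\hat F_{jk}|_X=0)$, so $\I_{\D_X}(d_k)\simeq\O_X(2d_k+d_j-d)$ is a line bundle, $H^1(X,\O_X(2d_k+d_j-d))=0$ by the standard cohomology of complete intersections (\cite[Exercise 5.5]{H}), and the glued family $\{B_{ijk}\}_i$ lifts to $X$ and then to a polynomial $H_{jk}\in S_{d_k}$, using that the restriction $H^0(\P^n,\O_{\P^n}(d_k))\to H^0(X,\O_X(d_k))$ is also surjective. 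The symmetry $B_{ijk}=B_{jik}$ then gives $H_{jk}=H_{ik}$ on each $X_{ijk}$, and a second run of the identical divisor-in-complete-intersection argument --- now on $X_k$ with the divisor $(\hat F_k|_{X_k}=0)$ --- glues the family $\{H_{jk}\}_{j\neq k}$ into the desired $F_k'$. This two-stage descent is the missing idea: it replaces the intractable sheaf $\I_{Y_k}$ by line bundles at every step, which is what makes the extension unobstructed.
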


\begin{proof}
Fix two indexes $j,k \in \{1,\dots,m\}$, write $X= X_{jk}$ and $\D_X$ for the restriction of the divisor $\D_F$ to $X$. 
The family $\{B_{ijk}\}_{i\ne j,\,k}$  determines a well defined object in $H^0(\D_X,\O_{\D_X}(d_k))$, by \eqref{relations}. 
Now, take into consideration the exact sequence
$$0\rightarrow \I_{\D_X}(d_k) \rightarrow \O_X(d_k) \rightarrow i_*(\O_{\D_X})(d_k) \rightarrow 0,$$
and the associated long exact sequence in cohomology
\begin{multline*}
0 \rightarrow H^0(X,\I_{\D_X}(d_k)) \rightarrow H^0(X,\O_{X}(d_k))\rightarrow  H^0(\D_X,\O_{\D_X}(d_k)) \\  
\rightarrow H^1(X,\I_{\D_X}(d_k))\rightarrow \dots 
\end{multline*}
Observe that $\D_X = (\hat{F}_{jk}|_X=0)$, and so
$\I_{\D_X}(d_k) \simeq \O_X(2d_k +d_j -d).$ Moreover,
since $H^1(X, \O_X(2d_k +d_j-d)) = 0$ (see again \cite[Exercise 5.5]{H}), the second morphism of the long sequence is surjective. 
Summarily, there exists an element $\tilde{H}_{jk} \in H^0(X,\O_X(d_k))$ whose restriction to $\D_X$ coincides with $\{B_{ijk}\}_{i \ne j,\,k}$. 

With a similar argument, the restriction map given by
$H^0(\P^n,\O_{\P^n}(d_k)) \xrightarrow{|_X} H^0(X,\O_X(d_k))$
is also surjective. Then, we can choose a global homogeneous polynomial $H_{jk}$ such that
$H_{jk} = B_{ijk}$ on $X_{ijk}$ for all $i \ne j,\,k$.
In addition, we have $H_{jk}=B_{ijk}=B_{jik} = H_{ik}$ on $X_{ijk}$. Hence, we have construct a family of homogeneous polynomials $\{H_{jk}\}_{j,k}$ with $deg(H_{jk})=d_k$ 
such that 
$H_{ik}=H_{jk}$ on every triple intersection $X_{ijk}$. 

Finally, we need to apply again the previous procedure but now over the family $\{H_{jk}\}$  
on the variety $X = X_{k}$ (see also \cite[Proposition 8.9]{CGM}).
From this result we deduce the existence of $F'_k \in S_{d_k}$ such that
$H_{jk} = F'_k  \hspace{0.3cm} \mbox{on} \,\, X_{jk}$ as claimed.
 \end{proof}

 \begin{remark}
  The previous lemma is also true for families of polynomials supported on an arbitrary number of indexes,
  and the proof can be preformed by induction. 
 \end{remark}
 
We end this section with the construction of certain rational vector fields dual, in some sense, to the homogeneous affine 1-forms $\{dF_i\}_{i=1}^m$.

\begin{remark}[Fields with the $\delta$-property]\label{field} 
 
 Fix an integer $r<n$ and select $I = \{i_1,\dots,i_r\} \subset \{1,\dots,m\}$. For each $j\in \{1,\dots,r\}$ we 
 can select, locally on the points of $X_I$, a rational vector field $V^{I}_{j}$ such that
 \begin{equation}\label{deltacondition}
 i_{ V^{I}_{j}}(dF_{i_k}) = \delta_{jk} \hspace{0.2cm} \mbox{on}\,\, X_I \hspace{0.2cm} \forall k \in \{1,\dots,r\}.
 \end{equation}
A similar construction was used in \cite[Proof of Proposition 3.1]{CPV}.
 
The correct objects to consider are the so called \textit{logarithmic vector fields}. The formal definition of a logarithmic derivation along an hypersurface $\D$  of a complex algebraic variety $X$ (with associated ideal sheaf $\I$) is the following:
 $$Der_X(\log \D)_{p} = \{\chi \in (\ensuremath{Der}_X)_p: \chi(\I_p) \subset \I_p \}.$$
We refer to \cite{SA2} for more details and properties. In particular, we shall use that 
 $$\Omega^1_X(\log \D)_p \hspace{0.2cm}\mbox{and} \hspace{0.2cm} \ensuremath{Der}_X(\log \D)_p$$
are reflexive $\O_{X,p}$-modules, dual to each other. Also,  when the divisor has simple normal crossings we are able to apply Saito's criterion (\cite{SA2} p. 270). If we fix local coordinates $f_1, \dots, f_n$
and assume that the divisor is defined by the zero locus of $f_1 \dots f_s$, then $\tfrac{df_1}{f_1},\dots,\tfrac{df_s}{f_s},df_{s+1},\dots,df_n$ is a free system of generators
of $\Omega_X^1(\log \D)_p$. And the local fields
$f_1\cdot \tfrac{\partial}{\partial f_1}, \dots,  f_s \cdot \tfrac{\partial}{\partial f_s}, \tfrac{\partial}{\partial f_{s+1}},\dots,  \tfrac{\partial}{\partial f_{n}}  $
determines a dual basis of $\ensuremath{Der}_X(\log \D)_p$.

In conclusion, for the simple normal crossings projective divisor $\D_F$ determined by the zero locus of $\prod_{i=1}^m F_i,$ 
it is possible to construct vector fields $V^{I}_{j}$ with the property \eqref{deltacondition} as claimed. 
\end{remark}

\subsubsection{Beginning of the proof of Proposition \ref{surjectivity}. Steps 1 to 4.} 

\noindent Note: The hypothesis of $\d$ being 2-balanced will be introduced only when necessary.
\smallskip

As before, fix $n, m\in \NN_{>3}$  and a partition $\d$ of $d$ into $m$ parts. Let $(\lambda^1\wedge \lambda^2,(F_i)_{i=1}^m)=(\lambda,\underline{F})\in \U_2(\d)$ and 
$\omega = \rho(\lambda,\underline{F}) = \sum_{i\ne j} \lambda_{ij} \hat{F}_{ij} dF_i\wedge dF_j .$
Consider $\alpha \in \T_{\omega}(\F_2(d,\P^n))$, i.e., a projective form $\alpha \in H^0(\P^n,\Omega^2_{\P^n}(d))/_{\langle \omega \rangle}$ 
satisfying equations \eqref{perturbation} and \eqref{perturbation2}. We want to prove that there exists an element 
$(\lambda',(F_i')_{i=1}^m) \in \T_{(\lambda, \underline{F})}\U_2(\d)$ such that
$d\rho(\lambda,\underline{F})(\lambda',(F_i')_{i=1}^m) = \alpha.$
Using formula \eqref{derivateimage}, the problem is equivalent to show that
$$\alpha = \sum_{i \ne j} \lambda'_{ij} \hat{F}_{ij} dF_i\wedge dF_j \, + 
							      \sum_{i\ne j\ne k} \lambda_{ij} \hat{F}_{ijk}F_k' dF_i\wedge dF_j +
							      2\sum_{i\ne j} \lambda_{ij} \hat{F}_{ij} dF_i'\wedge dF_j.$$
							      
In the sake of clarity, we will separate the proof in several steps related to the possible
vanishing of $\alpha$ on each stratum $X^k_{\D_F}$.
First, we shall prove that $\alpha$ vanishes on $X^4_{\D_F}$, and describe its restriction to $X^3_{\D_F}$.

\begin{proposition}[Step 1]\label{step1}
If $\alpha$ is a Zariski tangent vector of $\F_2(d,\P^n)$ at $\omega$, then $\alpha|_{X^4_{\D_F}}=0$. Moreover, the following decomposition holds:
$$ \alpha = \sum_{i\ne j \ne k}\hat{F}_{ijk}A_{ijk}dF_i \wedge dF_j + \varepsilon,$$
for some homogeneous polynomials $\{A_{ijk}\}$ with $A_{ijk} = - A_{jik}$, and $\varepsilon \in H^0(\CC^{n+1},\Omega_{\CC^{n+1}}^2)$  
a homogeneous form of total degree $d$ satisfying $\varepsilon|_{X_{\D_F}^3}=0$. 
\end{proposition}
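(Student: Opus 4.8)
The plan is to use only the decomposability perturbation equation \eqref{perturbation}, $\alpha\wedge\omega=0$; the integrability equation \eqref{perturbation2} is not needed for this step. First I would restrict this identity of $4$-forms, as sections of $\Omega^4_{\CC^{n+1}}$, to the affine cone $\C(X_{ij})$ over a codimension-two stratum. Since $\hat{F}_{ab}|_{\C(X_{ij})}$ vanishes unless $\{a,b\}=\{i,j\}$, only one term of $\omega$ survives there, namely $\omega|_{\C(X_{ij})}=2\lambda_{ij}\hat{F}_{ij}\,(dF_i\wedge dF_j)|_{\C(X_{ij})}$. Because $X_{ij}$ is a smooth positive-dimensional complete intersection (using $n>3$) its cone is integral, so $\hat{F}_{ij}$ is a non-zero-divisor on it; together with the genericity condition $\lambda_{ij}\ne 0$ from Definition \ref{generic} this lets me cancel and obtain the key relation
\begin{equation*}
(\alpha\wedge dF_i\wedge dF_j)\big|_{\C(X_{ij})}=0\qquad\text{for all }i\ne j.
\end{equation*}

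Next I would prove $\alpha|_{X^4_{\D_F}}=0$ pointwise. Fix a nonzero point $p$ of the cone over a codimension-four stratum $X_{ijkl}$ (such strata exist as $m>3$). Since $p\in\C(X_{ab})$ for each of the six pairs $\{a,b\}\subset\{i,j,k,l\}$, the relation above gives $\alpha(p)\wedge dF_a(p)\wedge dF_b(p)=0$ for all such pairs, while property $(\ast)$ guarantees that $dF_i(p),\dots,dF_l(p)$ are linearly independent. A short linear-algebra observation then forces $\alpha(p)=0$: writing $\alpha(p)=\sum_{s<t} c_{st}\,w_s\wedge w_t$ in a basis extending $w_i=dF_i(p),\dots,w_l=dF_l(p)$, each condition $\alpha(p)\wedge w_a\wedge w_b=0$ kills every coefficient $c_{st}$ with $\{s,t\}\cap\{a,b\}=\emptyset$, and since any pair $\{s,t\}$ is disjoint from some pair inside the four-element set $\{i,j,k,l\}$, all coefficients vanish. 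Thus $\alpha$ vanishes on $\C(X^4_{\D_F})$, and the Vanishing Lemma \ref{vanishinglemma} (with $k=4$) produces a decomposition $\alpha=\sum_{|I|=3}\hat{F}_I\,\mu_I$ with each $\mu_I$ a homogeneous $2$-form of degree $\sum_{i\in I}d_i$.

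Finally I would refine the $\mu_I$. On $\C(X_{ijk})$ every term $\hat{F}_I$ with $|I|=3$ vanishes except $\hat{F}_{ijk}$, so $\alpha|_{\C(X_{ijk})}=\hat{F}_{ijk}\,\mu_{ijk}|_{\C(X_{ijk})}$; restricting the key relation to this smaller cone and cancelling $\hat{F}_{ijk}$ gives $(\mu_{ijk}\wedge dF_a\wedge dF_b)|_{\C(X_{ijk})}=0$ for the three pairs $\{a,b\}\subset\{i,j,k\}$. The same linear-algebra argument, now with the three independent covectors $dF_i,dF_j,dF_k$, shows that $\mu_{ijk}|_{\C(X_{ijk})}$ lies in $\bigwedge^2\langle dF_i,dF_j,dF_k\rangle$, i.e. $\mu_{ijk}|_{X_{ijk}}=A\,dF_i\wedge dF_j+B\,dF_i\wedge dF_k+C\,dF_j\wedge dF_k$ for regular functions $A,B,C$ on $X_{ijk}$. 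Since $X_{ijk}$ is a complete intersection, the restriction $H^0(\P^n,\O_{\P^n}(d_k+2))\to H^0(X_{ijk},\O_{X_{ijk}}(d_k+2))$ is surjective (as in \cite[Exercise 5.5]{H}), so I can lift these coefficients to global homogeneous polynomials, organize them into an antisymmetric family with $A_{ijk}=-A_{jik}$ and $\deg A_{ijk}=d_k+2$, and set
\begin{equation*}
\varepsilon:=\alpha-\sum_{i\ne j\ne k}\hat{F}_{ijk}A_{ijk}\,dF_i\wedge dF_j .
\end{equation*}
Evaluating on each $\C(X_{ijk})$, where only the matching summand survives, yields $\varepsilon|_{X^3_{\D_F}}=0$, which is the asserted decomposition. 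I expect the main obstacle to be precisely this last globalization: choosing the coefficients $A,B,C$ so that they lift to well-defined global polynomials of the correct degree, respect the antisymmetry, and are mutually consistent across overlapping triples, and then checking stratum-by-stratum that the remainder $\varepsilon$ genuinely vanishes on all of $X^3_{\D_F}$. The cancellation and linear-algebra steps are routine by comparison.
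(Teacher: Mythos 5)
Your argument is essentially correct, but it follows a genuinely different route from the paper. The paper never touches the decomposability perturbation equation in this step: it restricts the \emph{integrability} perturbation equation \eqref{perturbation2} to the codimension-three strata $X_{ijk}$, where $i_v(\omega)$ vanishes and $d\omega$ restricts to $(\lambda_{ij}-\lambda_{ik}+\lambda_{jk})\hat{F}_{ijk}\,dF_i\wedge dF_j\wedge dF_k$; this is why the genericity conditions $\lambda_{ij}-\lambda_{ik}+\lambda_{jk}\ne 0$ of Definition \ref{generic} enter, and why the paper needs the fields with the $\delta$-property of Remark \ref{field} and two rounds of the Saito-based Division Lemma \ref{divisionlemma} to reach the decomposition \eqref{f1step1}. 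In the paper the vanishing on $X^4_{\D_F}$ comes only at the end, by comparing decompositions on $X_{ijkl}$. You instead use only $\alpha\wedge\omega=0$ restricted to the codimension-two cones $\C(X_{ij})$, which requires just $\lambda_{ij}\ne 0$, prove the vanishing on $X^4_{\D_F}$ first by pointwise linear algebra, and then invoke the Vanishing Lemma \ref{vanishinglemma} with $k=4$. Both derivations are valid; yours is more elementary and proves something slightly stronger (the conclusion follows from \eqref{perturbation} alone), while the paper's version generates the coefficients $G_{ijk}$ by the same mechanism (integrability restricted to strata) that Step 2 then exploits, so the two steps read more uniformly there.

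Two points in your write-up need repair. First, a degree slip: in the paper's conventions $\mu_{ijk}$ has total degree $d_i+d_j+d_k$ and $dF_i\wedge dF_j$ has total degree $d_i+d_j$, so the coefficient of $dF_i\wedge dF_j$ has degree $d_k$, not $d_k+2$; the lifting is along $H^0(\P^n,\O_{\P^n}(d_k))\to H^0(X_{ijk},\O_{X_{ijk}}(d_k))$. This matters downstream, since the Fundamental Lemma \ref{fundamentallemma} used in Step 3 requires $\deg(B_{ijk})=d_k$. Second, the passage from the pointwise statement ``$\mu_{ijk}(p)\in\bigwedge^2\langle dF_i(p),dF_j(p),dF_k(p)\rangle$ for all $p$'' to the existence of \emph{regular}, homogeneous coefficients $A,B,C$ on $X_{ijk}$ is not automatic; it is exactly what the Division Lemma packages. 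You should either invoke Lemma \ref{divisionlemma} (your relation $(\mu_{ijk}\wedge dF_a\wedge dF_b)|_{\C(X_{ijk})}=0$ is precisely of the form \eqref{muequation}, and by Remark \ref{depthcondition} the lemma applies to affine forms that do not descend), or argue that by property $(\ast)$ the three forms $dF_a\wedge dF_b$ trivialize a rank-three subbundle of $\Omega^2_{\CC^{n+1}}|_{\C(X_{ijk})\setminus\{0\}}$, so the coefficients are regular there and extend across the vertex by normality of the cone. Finally, the obstacle you anticipate at the end is not one: no mutual consistency of the lifts across overlapping triples is needed for this proposition, since each $X_{ijk}$ is handled independently and the remainder $\varepsilon$ is tested stratum by stratum; the compatibility on quadruple overlaps is precisely the content of Step 2, not Step 1.
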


\begin{proof}
Since $X_{\D_F}^3$ is contained in the singular set of $\omega$,  we have 
$i_v(\omega)|_{X_{ijk}}=0$ for each vector $v$ and every piece $X_{ijk}$ of $X_{\D_F}^3$. Also, for every $i,j$ and $k$ notice that
$$d\omega|_{X_{ijk}} = ((\lambda_{ij} - \lambda_{ik}+\lambda_{jk})\hat{F}_{ijk}dF_i\wedge dF_j \wedge dF_k)|_{X_{ijk}}.$$
Then, the integrability perturbation equation $i_v(\alpha)\wedge d\omega + i_v(\omega)\wedge d\alpha=0$ reduces to
$i_v\alpha\wedge ((\lambda_{ij} - \lambda_{ik}+\lambda_{jk})\hat{F}_{ijk}dF_i\wedge dF_j \wedge dF_k) = 0$ on $X_{ijk}.$
Taking into consideration the definition of $\U_2(\d)$ we get 
\begin{equation}\label{eq1}
i_v\alpha\wedge dF_i\wedge dF_j \wedge dF_k = 0 \hspace{0.5cm} \mbox{on}\hspace{0.2cm} X_{ijk} - X^4_{\D_F}. 
\end{equation}
If we select certain local bases at the points of $X_{ijk}$ as in Remark \ref{field} 
turned up for this particular case, then after some straightforward calculation we obtain
$$\alpha \wedge dF_i \wedge dF_j \wedge dF_k = 0 \hspace{0.5cm}\mbox{on}\hspace{0.2cm} X_{ijk}.$$ 
Now, we are able to apply Lemma \ref{divisionlemma} (division lemma), and deduce that
\begin{equation}\label{firstdecomposition}
\alpha = \gamma_i \wedge dF_i + \gamma_j \wedge dF_j + \gamma_k \wedge dF_k \hspace{0.5cm} \mbox{on} \hspace{0.2cm} X_{ijk},
 \end{equation}
for some homogeneous forms $\gamma_l\in H^0(\CC^{n+1},\Omega^1_{\CC^{n+1}})$ of total degree $d-d_l$. 
Again, we need to use fields with the $\delta$-property developed in  
Remark \ref{field}. In particular, we choose a rational vector field $Y_{ijk}^j$ such that
$$(i_{Y_{ijk}^j}(dF_j))|_{X_{ijk}} = 1 \,\,\hspace{0.3cm}\mbox{and}\hspace{0.3cm} (i_{Y_{ijk}^j}(dF_i))|_{X_{ijk}} = (i_{Y_{ijk}^j}(dF_k))|_{X_{ijk}} = 0.$$
Therefore, equation \eqref{eq1} combined with decomposition \eqref{firstdecomposition} implies that
\begin{align*}
\gamma_j \wedge dF_i\wedge dF_j \wedge dF_k =0, 
\end{align*}
at the points of $X_{ijk}$. We can also permute indexes to deduce the same condition for $\gamma_i$ and $\gamma_k$. 
Applying again the division lemma, we finally achieve the following formula:
\begin{equation}\label{f1step1}
 \alpha = G_{ijk}dF_i\wedge dF_j + G_{jki}dF_j\wedge dF_k + G_{ikj}dF_i\wedge dF_k \hspace{0.5cm}\mbox{on}\hspace{0.2cm}X_{ijk},
\end{equation}
where  $\{G_{rst}\}_{r,s,t}$ are homogeneous polynomials of the correct degree. 

Now, if we fix another index $l$ and 
compare the previous decompositions on their restriction to $X_{ijkl}= X_{ijk} \cap X_{jkl} \cap X_{ikl}$, then we get
\begin{align*}
 \alpha|_{X_{ijkl}} &= (G_{ijk}dF_i\wedge dF_j + G_{jki}dF_j\wedge dF_k + G_{ikj}dF_i\wedge dF_k)|_{X_{ijkl}}\\
&=(G_{jkl}dF_j\wedge dF_k + G_{klj}dF_k\wedge dF_l + G_{jlk}dF_j\wedge dF_l)|_{X_{ijkl}}.
 \end{align*}
Due to the normal crossings hypothesis, the restricted affine forms $dF_i|_{X_{ijkl}}$, $dF_j|_{X_{ijkl}}$, $dF_k|_{X_{ijkl}}$ and $dF_l|_{X_{ijkl}}$ are  linearly independent at every point of $X_{ijkl}$. Hence, we have
$G_{ijk}|_{X_{ijkl}} = 0$ for all $l\ne i,j,k$.
As a consequence, for every selection of indexes $i,j$ and $k$ there exists a homogeneous polynomial $A_{ijk}$ such that $G_{ijk} = \hat{F}_{ijk}A_{ijk}$ on $X_{ijk}$. It is easy to check that both families $\{G_{ijk}\}$ and $\{A_{ijk}\}$ can be selected antisymmetric in the first two indexes.

Finally, notice that $\alpha$ and 
$\sum \hat{F}_{ijk}A_{ijk}dF_i\wedge dF_j $
have the same restriction to each $X_{ijk}$. Hence the form 
$\varepsilon = \alpha - \sum \hat{F}_{ijk}A_{ijk}dF_i\wedge dF_j$ is a homogeneous affine form of total degree $d$ that vanishes on $X_{\D_F}^3$, as claimed.
\end{proof}

\begin{corollary}\label{step1corollary}
 Any Zariski tangent vector $\alpha \in \T_{\omega}\F_2(d,\P^n)$ may be written as
 $$\alpha = \sum_{i\ne j \ne k}\hat{F}_{ijk}A_{ijk}dF_i\wedge dF_j + \sum_{i< j}\hat{F_{ij}}\varepsilon_{ij}$$
for some homogeneous affine 2-forms $\{\varepsilon_{ij}\}$. 
 \end{corollary}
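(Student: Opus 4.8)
The plan is to read this decomposition directly off Proposition \ref{step1} combined with the Vanishing Lemma (Lemma \ref{vanishinglemma}), since essentially all of the geometric content has already been extracted in Step 1 and the corollary is a formal repackaging. Starting from an arbitrary Zariski tangent vector $\alpha \in \T_{\omega}\F_2(d,\P^n)$, Proposition \ref{step1} already furnishes antisymmetric homogeneous polynomials $A_{ijk} = -A_{jik}$ together with a homogeneous affine $2$-form $\varepsilon$ of total degree $d$ such that
$$\alpha = \sum_{i\ne j\ne k} \hat{F}_{ijk}\, A_{ijk}\, dF_i \wedge dF_j + \varepsilon, \qquad \varepsilon|_{X^3_{\D_F}} = 0.$$
Thus the first summand is already in exactly the shape demanded by the statement, and the only remaining task is to rewrite the error term $\varepsilon$.

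First I would note that, because $\varepsilon$ is homogeneous, its vanishing on the projective stratum $X^3_{\D_F}$ is equivalent to its vanishing on the affine cone $\C(X^3_{\D_F})$. This is precisely the hypothesis of the Vanishing Lemma applied with $k=3$: taking $\mu = \varepsilon$ and $\mathcal{E} = \Omega^2_{\CC^{n+1}}$ produces homogeneous affine $2$-forms $\{\varepsilon_I\}_{|I|=2}$, each of degree $\sum_{i\in I} d_i = d_i + d_j$, with
$$\varepsilon = \sum_{I:|I|=2} \hat{F}_I\, \varepsilon_I = \sum_{i<j} \hat{F}_{ij}\, \varepsilon_{ij}.$$
Substituting this back into the expression for $\alpha$ yields the asserted formula verbatim.

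There is essentially no obstacle here beyond the bookkeeping already carried out in Proposition \ref{step1} and Lemma \ref{vanishinglemma}. The only point deserving a line of care is the degree count: since $\hat{F}_{ij}$ has degree $d - d_i - d_j$, the forms $\varepsilon_{ij}$ must have degree $d_i+d_j$, which is exactly the output of the Vanishing Lemma (itself a consequence of the ideal description in Proposition \ref{generator}). It is also worth emphasizing that no new genericity or $2$-balanced hypothesis on $\d$ is needed at this stage; the balancedness assumption will only be invoked in the later steps of the proof of Proposition \ref{surjectivity}.
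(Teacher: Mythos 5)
Your proof is correct and follows exactly the paper's own route: the paper's proof of this corollary is precisely the combination of Proposition \ref{step1} with Lemma \ref{vanishinglemma} (applied with $k=3$, yielding the forms $\hat{F}_{ij}\varepsilon_{ij}$ of the stated degrees). Your additional remarks on the affine cone versus projective stratum and on the degree count are accurate clarifications, not deviations.
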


\begin{proof}
 It follows from Proposition \ref{step1} (step 1) and Lemma \ref{vanishinglemma} (vanishing lemma).
\end{proof}

The next step deals with the existence of the expected polynomials "$F'_i$". We would like to obtain  
some equations for the polynomials $\{A_{ijk}\}$ of step 1 to
deduce  that each $A_{ijk}/\lambda_{ij}$ only depends on the index $k$. 

\begin{proposition}[Step 2]\label{step2}
 With the notation of Proposition \ref{step1}, define $B_{ijk}= A_{ijk}/\lambda_{ij}$ for $i\ne j \ne k$.
Then, these new polynomials necessarily satisfy the relations
 $$B_{ijl}(x) = B_{jkl}(x) = B_{ikl}(x) \hspace{0.3cm}\forall x\in X_{ijkl}.$$
\end{proposition}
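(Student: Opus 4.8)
The plan is to squeeze the relations \eqref{relations} out of the integrability perturbation equation \eqref{perturbation2}, which is the one tangency condition not yet fully used. On a codimension-three stratum $X_{ijk}$ both $\omega$ and $i_v\omega$ vanish, so there \eqref{perturbation2} degenerates to the wedge condition already exhausted in Step 1 and says nothing about how distinct coefficients $A_{ijk}$ interact. To make them interact I would descend only to a codimension-\emph{two} stratum $X_{kl}$, the shallowest place where $\omega$ and $d\omega$ are genuinely nonzero and the term $i_v\omega\wedge d\alpha$ survives. I would substitute the decomposition of Corollary \ref{step1corollary}, namely $\alpha=\sum_{i\ne j\ne k}\hat F_{ijk}A_{ijk}\,dF_i\wedge dF_j+\sum_{i<j}\hat F_{ij}\varepsilon_{ij}$, and record on $X_{kl}$ the restrictions $\omega|_{X_{kl}}\propto\lambda_{kl}\hat F_{kl}\,dF_k\wedge dF_l$ and $d\omega|_{X_{kl}}\propto\sum_{e\ne k,l}(\lambda_{kl}-\lambda_{ke}+\lambda_{le})\hat F_{kle}\,dF_k\wedge dF_l\wedge dF_e$.

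The mechanism that makes the fourth index $l$ reappear is that the coefficients $A_{ijl},A_{ikl},A_{jkl}$, which are \emph{absent} from $\alpha|_{X_{kl}}$ at zeroth order, enter $d\alpha|_{X_{kl}}$ through the differentials $d\hat F_{ijl}=\sum_f\hat F_{ijlf}\,dF_f$: only $f=k$ survives on $X_{kl}$, producing the four-index factor $\hat F_{ijkl}$. A short computation then shows that the coefficient of $dF_i\wedge dF_j\wedge dF_k$ in $d\alpha|_{X_{kl}}$ equals $\hat F_{ijkl}\,(A_{ijl}-A_{ikl}+A_{jkl})$ up to terms carrying a factor $\hat F_{kl}$ or $\hat F_{kl f}$ coming from $\varepsilon$ and from the $\hat F_{abc}\,dA_{abc}$ pieces. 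After contracting \eqref{perturbation2} with a field $Y$ enjoying the $\delta$-property of Remark \ref{field} (say $i_YdF_k=1$ and $i_YdF_{*}=0$ on the other relevant indices) and extracting the coefficient of the top form $dF_i\wedge dF_j\wedge dF_k\wedge dF_l$, I would obtain a polynomial identity on $X_{kl}$ whose two sides come respectively from $i_Y\omega\wedge d\alpha$ and $i_Y\alpha\wedge d\omega$.

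The algebraic key is that every surviving term shares a common factor: since $\hat F_{kl}=F_iF_j\hat F_{ijkl}$, $\hat F_{kli}=F_j\hat F_{ijkl}$ and $\hat F_{klj}=F_i\hat F_{ijkl}$, all contributions are divisible by $F_iF_j\hat F_{ijkl}^2$, whereas the $\varepsilon$-terms (and the $dA$-terms) carry an \emph{extra} power of $F_i$ or $F_j$. Dividing by $F_iF_j\hat F_{ijkl}^2$ in the integral domain of functions on the smooth complete intersection $X_{kl}$ and then restricting to $X_{ijkl}=(F_i=F_j=0)$ therefore annihilates the $\varepsilon$-contributions and leaves a linear relation among $A_{ijl},A_{ikl},A_{jkl}$ with coefficients built from $\lambda_{kl}$ and the integrability constants $\lambda_{kl}-\lambda_{ke}+\lambda_{le}$; the genericity of $\U_2(\d)$ guarantees these constants are nonzero, and the Plücker relations for the decomposable $\lambda=\lambda^1\wedge\lambda^2$ collapse the relation (observe that if $A_{\cdot\cdot l}=\lambda_{\cdot\cdot}B_l$ then $A_{ijl}-A_{ikl}+A_{jkl}=(\lambda_{ij}-\lambda_{ik}+\lambda_{jk})B_l$) into the symmetric form $A_{ijl}/\lambda_{ij}=A_{ikl}/\lambda_{ik}=A_{jkl}/\lambda_{jk}$, i.e. exactly \eqref{relations} for $B_{ijk}=A_{ijk}/\lambda_{ij}$. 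I expect the main obstacle to be precisely this last passage: because every stratum below $X_{kl}$ sits in the singular locus of $\omega$, the equation degenerates to $0=0$ under naive restriction to $X_{ijkl}$, so the relation cannot be read off directly but must be teased out through the divisibility bookkeeping above, with careful tracking of signs and of the $\lambda$-combinations so that they cancel into the clean symmetric identity. Once \eqref{relations} is in hand, the Fundamental Lemma \ref{fundamentallemma} (applicable since $B_{ijk}$ is symmetric in $i,j$ and of degree $d_k$) will in the next step produce the polynomials $F_k'$ realizing each $B_{ijk}$ as the parametrized deformation of $F_k$.
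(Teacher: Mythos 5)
Your strategy is the same as the paper's: contract the integrability perturbation equation \eqref{perturbation2} with a $\delta$-property field from Remark \ref{field}, work on a codimension-two stratum $X_{kl}$ where $i_Y\omega$ and $d\omega$ survive, insert the decomposition of Corollary \ref{step1corollary}, let the fourth index reappear through $d\hat F_{ijl}$, cancel a common polynomial factor, and restrict to $X_{ijkl}$. Before the main issue, one technical slip: not every surviving term is divisible by $F_iF_j\hat F_{ijkl}^2$. For example, the contribution of a coefficient $A_{kil}$ entering through $\hat F_{kile}$ with a fifth index $e$ carries the factor $\hat F_{kl}\cdot\hat F_{kile}=F_iF_j^2\,\hat F_{ijkl}\,\hat F_{ijkle}$, which is not a multiple of $\hat F_{ijkl}^2$. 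The correct bookkeeping (the paper's) is to cancel only the factor $\hat F_{kl}$, which genuinely divides both $i_Y\omega\wedge d\alpha$ and $i_Y\alpha\wedge d\omega$ on $X_{kl}$; after that single cancellation, restriction to $X_{ijkl}$ kills all $\varepsilon$-, $dA$- and fifth-index terms because they still carry a factor $F_i$ or $F_j$.

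The essential gap is your endgame. Your computation produces exactly \emph{one} linear relation among the three polynomials $A_{ijl},A_{ikl},A_{jkl}$ on $X_{ijkl}$, and one linear equation in three unknowns has a two-dimensional solution space: it cannot force the two independent equalities $B_{ijl}=B_{ikl}=B_{jkl}$. Your parenthetical appeal to the Plücker relations is circular: it verifies that the desired conclusion is \emph{consistent} with the relation (indeed, when all three $B$'s are equal the coefficients of the relation sum to a multiple of $(\lambda\wedge\lambda)_{ijkl}=0$), not that the relation implies the conclusion. The missing idea is to generate a \emph{system} of such relations by permuting which of the three indices is the contracted one — that is, repeating the whole computation on the strata $X_{il}$, $X_{jl}$ and $X_{kl}$ with the corresponding fields — and then combining them. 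This is exactly what the paper does: it derives the three equations $Eq_{i_0l_0i_0k_0}$-type (labelled $Eq_{i_0l_0j_0k_0}$, $Eq_{j_0l_0i_0k_0}$, $Eq_{k_0l_0i_0j_0}$) and shows that the combination $Eq_{i_0l_0j_0k_0}+2\,Eq_{j_0l_0i_0k_0}+Eq_{k_0l_0i_0j_0}$, together with the antisymmetry $A_{ijk}=-A_{jik}$, collapses to
\begin{equation*}
(\lambda_{i_0j_0}-\lambda_{i_0k_0}+\lambda_{j_0k_0})\,(B_{i_0j_0l_0}-B_{j_0k_0l_0})=0
\qquad\mbox{on } X_{i_0j_0k_0l_0},
\end{equation*}
after which the conclusion follows not from decomposability of $\lambda$ but from the genericity condition $\lambda_{ij}-\lambda_{ik}+\lambda_{jk}\ne0$ built into the definition of $\U_2(\d)$. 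Without producing and combining these additional relations, your argument stops one step short of the statement.
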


\begin{proof}
We will fix in some order four indexes $i_0,j_0,k_0$ and $l_0$  to deduce the desired conditions on $
X_{i_0j_0k_0l_0}$. 

First, select $j_0$ and $l_0$, and use  
the construction of Remark \ref{field} to select a local rational vector field $Y_{j_0} := Y_{j_0l_0}^{j_0}$
with the corresponding $\delta$-property.
For simplicity, we will work separately on the two terms of the restricted perturbation equation
\begin{equation}\label{formula}
(i_{Y_{j_0}}(\alpha)\wedge d\omega)|_{X_{j_0l_0}} + (i_{Y_{j_0}}(\omega)\wedge d\alpha)|_{X_{j_0l_0}}=0.
 \end{equation}
From Corollary \ref{step1corollary} we have
$$i_{Y_{j_0}}(\alpha) = \sum_{i\ne j\ne k}2\hat{F}_{ijk}A_{ijk}i_{Y_{j_0}}(dF_i)dF_j + \sum_{i< j}\hat{F}_{ij}i_{Y_{j_0}}(\varepsilon_{ij}).$$
Hence its restriction to $X_{j_0l_0}$ is 
\begin{multline*}  
          \sum_k 2\hat{F}_{j_0l_0k}A_{l_0j_0}dF_{l_0} 
        + \sum_i 2\hat{F}_{ij_0l_0}A_{ij_0l_0}i_{Y_{j_0}}(dF_i)dF_{j_0} \\ + \sum_i 2\hat{F}_{ij_0l_0} A_{il_0j_0} i_{Y_{j_0}}(dF_i)dF_{l_0}
        + \sum_j 2\hat{F}_{j_0jl_0}A_{j_0jl_0}dF_j  \\
        + \, \hat{F}_{j_0l_0} i_{Y_{j_0}}(\varepsilon_{j_0l_0}).
\end{multline*}
Also notice that
\begin{multline*}
(d\omega)|_{X_{j_0l_0}} = (\sum_{i\ne j\ne k}\lambda_{ij}\hat{F}_{ijk}dF_i\wedge dF_j \wedge dF_k)|_{X_{j_0l_0}} \\ = (\chi_{j_0l_0} \wedge dF_{j_0}\wedge dF_{l_0})|_{X_{j_0l_0}}
\end{multline*}
for some homogeneous affine 1-form $\chi_{j_0l_0}$. 
As a consequence, we get the following description of the whole \textit{first term} on its restriction to $X_{j_0l_0}$:
\begin{multline*}
i_{Y_{j_0}}(\alpha)\wedge d\omega = 
 \sum_{i\ne j\ne k \ne r \atop  r\ne j_0,l_0} 2\lambda_{ij}A_{j_0rl_0}\hat{F}_{j_0rl_0}\hat{F}_{ijk}dF_r \wedge dF_i \wedge dF_j \wedge dF_k  \\
 + 2\hat{F}_{j_0l_0}i_{Y_{j_0}}(\varepsilon_{j_0,l_0}) \wedge \chi_{j_0l_0} \wedge dF_{j_0}\wedge dF_{l_0},
\end{multline*}
where $\hat{F}_{j_0rl_0}\hat{F}_{ijk}$ can be replaced  by $\hat{F}_{j_0l_0}\hat{F}_{ijkr}$.

Now, we ought to make the same process on the other term of expression \eqref{formula}. In this case, observe that
\begin{align*}
 (i_{Y_{j_0}}(\omega))|_{X_{j_0l_0}} = (\sum_{i\ne j}2\lambda_{ij}\hat{F}_{ij}i_{Y_{j_0}}(dF_i)\,dF_j)|_{X_{j_0l_0}} = (2\lambda_{j_0l_0}\hat{F}_{j_0l_0}dF_{l_0})|_{X_{j_0l_0}}.
 \end{align*}
After some straight forward calculation  we obtain that  the restriction of $ i_{Y_{j_0}}(\omega) \wedge d\alpha$  to $X_{j_0l_0}$ coincides with the restriction of the form
\begin{multline*}
 2\lambda_{j_0l_0}\hat{F}_{j_0l_0}dF_{l_0} \wedge \bigg ( \sum_{i\ne j\ne k\ne r}A_{ijk}\hat{F}_{ijkr}dF_i \wedge dF_j \wedge dF_r  \\
 + \sum_{i\ne j\ne k}\hat{F}_{ijk}dA_{ijk}\wedge dF_i\wedge dF_j + \sum_{i\ne j\ne k \atop i<j}\hat{F}_{ijk}dF_k \wedge \varepsilon_{ij} + \sum_{i< j}\hat{F}_{ij}d\varepsilon_{ij} \bigg ).
\end{multline*}

Next, we need to add the two term's expressions obtained. Also, notice that we are allowed 
to cancel the polynomial
factor $\hat{F}_{j_0l_0}$. Then, select two more distinct indexes $i_0$ and $k_0$, and further restrict to $X_{i_0j_0k_0l_0}$ to obtain
\begin{multline*}
\left ( (\lambda_{j_0k_0} + \lambda_{k_0l_0})A_{j_0i_0l_0} + (\lambda_{i_0j_0}+ \lambda_{l_0i_0})A_{j_0k_0l_0} + \lambda_{j_0l_0}A_{i_0k_0l_0}\right )  \hat{F}_{i_0j_0k_0l_0} \\ dF_{i_0}\wedge dF_{j_0} \wedge dF_{k_0} \wedge dF_{l_0} = 0. 
\end{multline*}
Due to the normal crossings hypothesis, the definition of the polynomials $\{B_{ijk}\}$ and the order in which the indexes were selected, 
we have produced the following equation  on $X_{i_0j_0k_0l_0}$:
\begin{multline*}
(\lambda_{j_0k_0}\lambda_{j_0i_0} + \lambda_{k_0l_0}\lambda_{j_0i_0})B_{j_0i_0l_0} + (\lambda_{i_0j_0}\lambda_{j_0k_0}+ \lambda_{l_0i_0}\lambda_{j_0k_0})B_{j_0k_0l_0}\\ + \lambda_{j_0l_0}\lambda_{i_0k_0}B_{i_0k_0l_0} =0,
\end{multline*}
from now denoted by [$Eq_{j_0l_0i_0k_0}$].
Since $A_{ijk}=-A_{jik}$, if we consider the sum of equations
[$Eq_{i_0l_0j_0k_0}$], [$2Eq_{j_0l_0i_0k_0}$] and [$Eq_{k_0l_0i_0j_0}$], then we have
$$(\lambda_{i_0j_0} - \lambda_{i_0k_0} + \lambda_{j_0k_0})(B_{i_0j_0l_0}-B_{j_0k_0l_0}) = 0 $$
on $X_{i_0j_0k_0l_0}$ as wanted. Finally, the other equality follows from an appropriated permutation of indexes.
\end{proof}

In the next step, we deduce that any tangent vector at
$\omega$ may be decompose as a sum of a perturbation in the image of $d\rho$ that vanishes on $X_{\D_F}^4$,  
with another tangent vector that vanishes on a stratum of lower codimension. 

\begin{proposition}[Step 3]\label{step3}
For every $\alpha \in \T_{\omega}\F_2(d,\P^n)$,
there exists a family of homogeneous polynomials $F'_{1},\dots,F'_m$ of degrees $d_1,\dots,d_m$ and another tangent vector
$\beta \in \T_{\omega}\F_2(d,\P^n)$
with $\beta|_{X_{\D_F}^3}=0$, such that
\begin{align*}
\alpha &= \sum_{i\ne j\ne k}\lambda_{ij} \hat{F}_{ijk}F_k'dF_i\wedge dF_j + \sum_{i\ne j}2\lambda_{ij} \hat{F}_{ij} dF_i'\wedge dF_j + \beta \\
&= d\rho(\lambda,\underline{F})(0,(F_i')_{i=1}^m) + \beta .
 \end{align*}
\end{proposition}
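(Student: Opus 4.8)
The plan is to read off the polynomials $F'_k$ from the data already produced in Steps 1 and 2, subtract the corresponding element of $\im d\rho$, and check that what remains vanishes on the codimension-three stratum.

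First I would invoke Corollary \ref{step1corollary} (equivalently Proposition \ref{step1}) to write $\alpha = \sum_{i\ne j\ne k}\hat{F}_{ijk}A_{ijk}\,dF_i\wedge dF_j + \varepsilon$, where $A_{ijk}=-A_{jik}$, $\deg A_{ijk}=d_k$, and $\varepsilon|_{X_{\D_F}^3}=0$. Following Proposition \ref{step2} I set $B_{ijk}:=A_{ijk}/\lambda_{ij}$. These are exactly the inputs required by the Fundamental Lemma: since $A$ and $\lambda$ are both antisymmetric in their first two indices one has $B_{ijk}=B_{jik}$ and $\deg B_{ijk}=d_k$, while Proposition \ref{step2} supplies the compatibility $B_{ijl}(x)=B_{ikl}(x)=B_{jkl}(x)$ on $X_{ijkl}$. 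Lemma \ref{fundamentallemma} then produces, for each $k$, a homogeneous polynomial $F'_k\in S_{d_k}$ with $A_{ijk}(x)=\lambda_{ij}F'_k(x)$ for every $x\in X_{ijk}$.

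With these $F'_k$ in hand I would form $\alpha_2 = d\rho(\lambda,\underline{F})(0,(F'_i)_{i=1}^m)$ as in Remark \ref{remarkderivateimage} and define $\beta := \alpha-\alpha_2$. That $\beta$ is again a Zariski tangent vector is immediate: $\T_\omega\F_2(d,\P^n)$ is a linear space and contains both $\alpha$ and, via the inclusion $\im d\rho\subset\T_\omega\F_2(d,\P^n)$, the form $\alpha_2$. The only real content is the vanishing $\beta|_{X_{\D_F}^3}=0$, which I would verify one stratum at a time. On a fixed $X_{i_0j_0k_0}$ the factor $\hat{F}_{ij}$ vanishes identically (it omits only two of the three defining polynomials), so the summand $2\sum_{i\ne j}\lambda_{ij}\hat{F}_{ij}\,dF'_i\wedge dF_j$ of $\alpha_2$ drops out, and $\hat{F}_{ijk}$ survives only when $\{i,j,k\}=\{i_0,j_0,k_0\}$. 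Consequently both $\alpha|_{X_{i_0j_0k_0}}$ and $\alpha_2|_{X_{i_0j_0k_0}}$ collapse to the same three-term expression in $dF_{i_0}\wedge dF_{j_0}$, $dF_{j_0}\wedge dF_{k_0}$ and $dF_{i_0}\wedge dF_{k_0}$, whose coefficients are $A_{i_0j_0k_0}$, $A_{j_0k_0i_0}$, $A_{i_0k_0j_0}$ on one side and $\lambda_{i_0j_0}F'_{k_0}$, $\lambda_{j_0k_0}F'_{i_0}$, $\lambda_{i_0k_0}F'_{j_0}$ on the other. These coincide on $X_{i_0j_0k_0}$ by the three cyclic instances of the identity $A_{ijk}=\lambda_{ij}F'_k$, all evaluated on this single intersection. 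Hence $\beta$ vanishes on each $X_{i_0j_0k_0}$, i.e. on all of $X_{\D_F}^3$.

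The main obstacle is bookkeeping rather than genuine difficulty, since the analytic weight has already been carried by Steps 1 and 2 and by Lemma \ref{fundamentallemma}. The points demanding care are: confirming that the antisymmetry conventions make $B_{ijk}$ honestly symmetric in $(i,j)$ with degree $d_k$, so that the Fundamental Lemma applies verbatim; and checking that the three coefficient matchings used on $X_{i_0j_0k_0}$ are exactly the three index instances of $A_{ijk}=\lambda_{ij}F'_k$ delivered by that lemma. Once these are in place, the decomposition $\alpha = \alpha_2 + \beta$ with $\beta|_{X_{\D_F}^3}=0$ follows, which is precisely the assertion of the proposition.
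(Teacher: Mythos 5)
Your proposal is correct and follows essentially the same route as the paper's proof: apply Steps 1 and 2, divide by $\lambda_{ij}$ to get the symmetric family $B_{ijk}$, invoke the Fundamental Lemma to produce the $F'_k$, and set $\beta = \alpha - d\rho(\lambda,\underline{F})(0,(F'_i))$, using linearity of the tangent space and the vanishing of $\hat{F}_{ij}$ and the off-stratum $\hat{F}_{ijk}$ on each $X_{i_0j_0k_0}$. Your stratum-by-stratum verification of $\beta|_{X_{\D_F}^3}=0$ merely spells out what the paper dismisses as "clear," so there is no substantive difference.
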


\begin{proof}
First, we use Propositions \ref{step1} and \ref{step2} (steps 1 and 2) to get the following decomposition for $\alpha$:
$$ \alpha = \sum_{i\ne j \ne k}\lambda_{ij}\hat{F}_{ijk}B_{ijk}dF_i \wedge dF_j + \varepsilon,$$
where $\{B_{ijk}\}$ satisfy the relations
$B_{ijl} = B_{jkl} = B_{ikl}$
on $X_{ijkl}$, and $\varepsilon|_{X^3_{\D_F}}=0$. 
By Lemma \ref{fundamentallemma} (fundamental lemma), 
there exist homogeneous polynomials $F'_1,\dots,F'_m$ of respective degrees $d_1,\dots,d_m$ such that
$B_{ijk} = F'_k $ on $X_{ijk}$. 
Then, it is clear that the forms $\alpha$ and 
$\sum_{i\ne j\ne k} \lambda_{ij}\hat{F}_{ijk}F'_k dF_i \wedge dF_j $
have the same restriction to $X_{\D_F}^3$. 
Moreover, if we add and subtract a suitable term, then we obtain 
\begin{multline*}
\alpha = \sum_{i\ne j\ne k}\lambda_{ij} \hat{F}_{ijk}F_k'dF_i\wedge dF_j + \sum_{i\ne j}2\lambda_{ij} \hat{F}_{ij} dF_i'\wedge dF_j + \beta \\ = d\rho(\lambda,\underline{F})(0,(F_i')_{i=1}^m) + \beta ,
 \end{multline*}
where $\beta$ is a homogeneous affine form such that $\beta|_{X_{\D_F}^3}=0$. 
Finally, since $d\rho(0,(F_i'))$ and $\alpha$ are Zariski tangent vectors at $\omega$, 
the same condition holds for $\beta$. 
\end{proof}

\begin{remark}
From now on, we are reduced to prove our result 
for elements $\beta \in \T_{\omega}\F(d,\P^n)$ with the additional hypothesis $\beta|_{X_{\D_F}^3}=0$. 
In advanced, taking into consideration Remark \ref{remarkderivateimage}, these forms are expected to be related to perturbations of the 
coefficients $\lambda$. This is
going to be true only assuming certain extra condition among $\d$ (balanced case). 
 \end{remark}

Next proposition sets the background to end the proof, and is useful to understand the possible 
trouble in the non balanced case.

By Lemma \ref{vanishinglemma}, if $\beta \in \T_{\omega}\F(d,\P^n)$ vanishes on $X_{\D_F}^3$, then it may be written as
$$\beta = \sum_{i\ne j}\hat{F}_{ij}\beta_{ij},$$
for some homogeneous affine forms $\{\beta_{ij}\}$ such that $\beta_{ij}=\beta_{ji}$. 
Now, we would like to obtain further information on these new forms $\beta_{ij}$.

\begin{proposition}[Step 4]\label{step4}With the notations above, for each selection of $i_0,j_0$ and $k_0$ there
exist  $\lambda'_{i_0j_0}\in \CC$ and 
homogeneous polynomials $B_{i_0k_0}^{i_0j_0}$ and $B_{j_0k_0}^{i_0j_0}$ such that 
$$\beta_{i_0j_0} = \lambda_{i_0j_0}'dF_{i_0}\wedge dF_{j_0} + \hat{F}_{i_0j_0k_0}(B_{i_0k_0}^{i_0j_0}dF_{i_0} \wedge dF_{k_0} + B_{j_0k_0}^{i_0j_0}dF_{j_0} \wedge dF_{k_0})$$
on $X_{i_0j_0k_0}$. 
\end{proposition}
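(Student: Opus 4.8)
The plan is to pin down $\beta_{i_0j_0}$ on the successively finer strata $X_{i_0j_0}$ and $X_{i_0j_0k_0}$, using the decomposability perturbation equation \eqref{perturbation} first and the integrability perturbation equation \eqref{perturbation2} afterwards, following the template of Proposition \ref{step2}. First I would restrict $\beta\wedge\omega=0$ to $X_{i_0j_0}$. Every summand $\hat{F}_{ij}\beta_{ij}$ of $\beta$ and $\lambda_{ij}\hat{F}_{ij}dF_i\wedge dF_j$ of $\omega$ carries the factor $\hat{F}_{ij}$, which vanishes on $X_{i_0j_0}$ unless $\{i,j\}=\{i_0,j_0\}$, so only the $(i_0,j_0)$-terms survive and one obtains $4\lambda_{i_0j_0}\hat{F}_{i_0j_0}^2\,\beta_{i_0j_0}\wedge dF_{i_0}\wedge dF_{j_0}=0$ on $X_{i_0j_0}$. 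Since $\lambda_{i_0j_0}\ne0$ (Definition \ref{generic}) and $\hat{F}_{i_0j_0}\not\equiv0$ on the integral variety $X_{i_0j_0}$, this forces $\beta_{i_0j_0}\wedge dF_{i_0}\wedge dF_{j_0}=0$ there, and Lemma \ref{divisionlemma} (with $k=2$, $I=\{i_0,j_0\}$) produces homogeneous $1$-forms $\gamma_{i_0},\gamma_{j_0}$ of total degrees $d_{j_0}$ and $d_{i_0}$ with $\beta_{i_0j_0}=\gamma_{i_0}\wedge dF_{i_0}+\gamma_{j_0}\wedge dF_{j_0}$ on $X_{i_0j_0}$.

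The heart of the argument, and the step I expect to be the main obstacle, is to feed this decomposition into the integrability equation. Mimicking Proposition \ref{step2}, I would restrict \eqref{perturbation2} to $X_{i_0j_0}$, contract against a rational field $Y$ with the $\delta$-property of Remark \ref{field} (dual to $dF_{j_0}$, resp.\ $dF_{i_0}$), and finally restrict to $X_{i_0j_0k_0}$. After cancelling the nonvanishing factor $\hat{F}_{i_0j_0}$ and the scalar combinations of the $\lambda$'s, nonzero precisely by the genericity inequalities $\lambda_{ij}\ne0$ and $\lambda_{ij}-\lambda_{ik}\pm\lambda_{jk}\ne0$ of Definition \ref{generic}, the bookkeeping should collapse to the wedge identities $\beta_{i_0j_0}\wedge dF_{j_0}\wedge dF_{k_0}=0$ and $\beta_{i_0j_0}\wedge dF_{i_0}\wedge dF_{k_0}=0$ on $X_{i_0j_0k_0}$. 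Tracking exactly which terms survive each restriction, and the precise scalar factors, is the delicate part, just as in Step 2. Combined with the previous decomposition these give $\gamma_{i_0}\wedge dF_{i_0}\wedge dF_{j_0}\wedge dF_{k_0}=0$ and $\gamma_{j_0}\wedge dF_{i_0}\wedge dF_{j_0}\wedge dF_{k_0}=0$ on $X_{i_0j_0k_0}$; a second use of Lemma \ref{divisionlemma}, now applied to the $1$-forms $\gamma_{i_0},\gamma_{j_0}$ (so $k=1$, $I=\{i_0,j_0,k_0\}$, valid for $n>3$ by Remark \ref{depthcondition}), rewrites both inside $\langle dF_{i_0},dF_{j_0},dF_{k_0}\rangle$ with homogeneous coefficients. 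Substituting back yields $\beta_{i_0j_0}=\lambda^{(k_0)}dF_{i_0}\wedge dF_{j_0}+b^{(k_0)}dF_{i_0}\wedge dF_{k_0}+c^{(k_0)}dF_{j_0}\wedge dF_{k_0}$ on $X_{i_0j_0k_0}$, in which the coefficient $\lambda^{(k_0)}$ of $dF_{i_0}\wedge dF_{j_0}$ is homogeneous of total degree $0$, hence a constant.

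Finally I would make both the constant and the divisibility precise by varying the third index. The same argument with an arbitrary index $k$ gives $\beta_{i_0j_0}|_{X_{i_0j_0k}}=\lambda^{(k)}dF_{i_0}\wedge dF_{j_0}+b^{(k)}dF_{i_0}\wedge dF_{k}+c^{(k)}dF_{j_0}\wedge dF_{k}$. Restricting both the $k_0$- and the $l$-decompositions to $X_{i_0j_0k_0l}$ and matching coefficients of the pointwise linearly independent $2$-forms $dF_a\wedge dF_b$ (normal crossings) simultaneously forces $\lambda^{(k_0)}=\lambda^{(l)}=:\lambda'_{i_0j_0}$ together with $b^{(k_0)}|_{X_{i_0j_0k_0l}}=c^{(k_0)}|_{X_{i_0j_0k_0l}}=0$ for every $l\ne i_0,j_0,k_0$. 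Hence $b^{(k_0)}$ and $c^{(k_0)}$ vanish on the zero locus of $\hat{F}_{i_0j_0k_0}=\prod_{l\ne i_0,j_0,k_0}F_l$ inside $X_{i_0j_0k_0}$, so by the normal-crossings structure of the ideals (Proposition \ref{generator}) they are divisible by $\hat{F}_{i_0j_0k_0}$; setting $b^{(k_0)}=\hat{F}_{i_0j_0k_0}B^{i_0j_0}_{i_0k_0}$ and $c^{(k_0)}=\hat{F}_{i_0j_0k_0}B^{i_0j_0}_{j_0k_0}$ yields the claimed formula.
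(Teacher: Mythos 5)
Your overall scaffolding is sound, and two pieces of it are genuinely nice: opening with the decomposability equation $\beta\wedge\omega=0$ restricted to $X_{i_0j_0}$ (which the paper does not use in this step) to get $\beta_{i_0j_0}=\gamma_{i_0}\wedge dF_{i_0}+\gamma_{j_0}\wedge dF_{j_0}$ is correct and clean, and your later reduction to dividing \emph{one}-forms on $X_{i_0j_0k_0}$ sidesteps the paper's division of two-forms on a codimension-three stratum. The final passage (varying the third index, matching coefficients on $X_{i_0j_0k_0l}$, reading off the constant $\lambda'_{i_0j_0}$ and the divisibility by $\hat{F}_{i_0j_0k_0}$) coincides with the paper's ending and is fine.

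However, the middle step is a genuine gap, and it is exactly where the content of the proposition lives. You claim that restricting \eqref{perturbation2} to $X_{i_0j_0}$, contracting with a $\delta$-dual field, restricting to $X_{i_0j_0k_0}$ and cancelling nonzero factors ``collapses'' to the four-form identities $\beta_{i_0j_0}\wedge dF_{j_0}\wedge dF_{k_0}=0$ and $\beta_{i_0j_0}\wedge dF_{i_0}\wedge dF_{k_0}=0$. It does not. What that procedure actually produces is the coupled relation \eqref{importanteq} of the paper, namely
\begin{multline*}
(\lambda_{i_0j_0}-\lambda_{i_0k_0}+\lambda_{j_0k_0})\,\hat{F}_{i_0j_0k_0}\,i_{Y_{j_0}}(\beta_{i_0j_0})\wedge dF_{i_0}\wedge dF_{j_0}\wedge dF_{k_0}\\
+2\lambda_{j_0i_0}\hat{F}_{i_0j_0k_0}\,dF_{i_0}\wedge dF_{k_0}\wedge\beta_{i_0j_0}
+2\lambda_{j_0i_0}\hat{F}_{i_0j_0k_0}\,dF_{i_0}\wedge dF_{j_0}\wedge\beta_{i_0k_0}=0,
\end{multline*}
in which $\beta_{i_0j_0}$ is coupled both to the unknown contraction $i_{Y_{j_0}}(\beta_{i_0j_0})$ and to the \emph{other} form $\beta_{i_0k_0}$. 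The only clean consequences of this single equation under wedge manipulations are the weaker five-form identities $\beta_{ab}\wedge dF_{i_0}\wedge dF_{j_0}\wedge dF_{k_0}=0$ (wedging with $dF_{j_0}$ or $dF_{k_0}$), which is precisely the paper's starting point; further contractions only introduce new unknown terms rather than decoupling. Your target four-form identities are true a posteriori (they follow from the proposition's conclusion), but extracting them is where the paper spends its effort: it first decomposes all the $\beta_{ab}$ via Lemma \ref{divisionlemma} using the five-form identities, substitutes back into \eqref{importanteq}, and then solves a linear system obtained by permuting $(i_0,j_0,k_0)$ --- this is where the genericity conditions $\lambda_{ij}-\lambda_{ik}\pm\lambda_{jk}\neq0$ of Definition \ref{generic} are actually consumed. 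Asserting that the ``bookkeeping should collapse'' replaces this entire mechanism with a hope; as phrased, the step would fail, so the proof is incomplete at its crux.
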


\begin{proof}
We will follow a  similar idea to that used in steps 1 and 2.
 Fix $i_0$ and $j_0$, and use Remark \ref{field} to select a rational vector field $Y_{j_0}=Y_{j_0}^{i_0j_0}$ with the corresponding $\delta$-property.
From the integrability perturbation equation restricted to $X_{i_0j_0}$ we have
\begin{multline*}
%  (i_{Y_{j_0}}\beta \wedge d\omega + i_{Y_{j_0}}\omega \wedge d\beta)|_{X_{i_0j_0}} = 
 \sum_{i\ne j\ne k} 2\lambda_{ij}\hat{F}_{i_0j_0}\hat{F}_{ijk}\,i_{Y_{j_0}}(\beta_{i_0j_0})\wedge dF_i \wedge dF_j \wedge dF_k \\ 
  + \sum_{i\ne j\ne k}2\lambda_{j_0i_0}\hat{F}_{i_0j_0}\hat{F}_{ijk}\,dF_{i_0}\wedge dF_k \wedge \beta_{ij}   
 + 4\lambda_{j_0i_0}\hat{F}_{i_0j_0}\hat{F}_{i_0j_0}dF_{i_0}\wedge d\beta_{i_0j_0} = 0  .                             
\end{multline*}
Afterward, we can remove the factor $\hat{F}_{i_0j_0}$, select another index $k_0$, restrict the equation to $X_{i_0j_0k_0}$ and obtain
\begin{multline}
(\lambda_{i_0j_0}-\lambda_{i_0k_0} +\lambda_{j_0k_0})\hat{F}_{i_0j_0k_0}i_{Y_{j_0}}(\beta_{i_0j_0})
dF_{i_0}\wedge dF_{j_0} \wedge dF_{k_0} \\
+ 2\lambda_{j_0i_0}\hat{F}_{i_0j_0k_0}dF_{i_0}\wedge dF_{k_0} \wedge \beta_{i_0j_0}  
+2\lambda_{j_0i_0}\hat{F}_{i_0j_0k_0}dF_{i_0}\wedge dF_{j_0} \wedge \beta_{i_0k_0} = 0.\label{importanteq}
\end{multline}
If we take the wedge product by $dF_{j_0}|_{X_{i_0j_0k_0}}$, then we deduce 
$$(\beta_{i_0j_0} \wedge dF_{i_0}\wedge dF_{j_0} \wedge dF_{k_0})|_{X_{i_0j_0k_0}} = 0.$$
Also, the same conclusion holds for $\beta_{i_0k_0}$ and $\beta_{j_0k_0}$. From Lemma \ref{divisionlemma} (division lemma) we have
$$(\beta_{i_0j_0})|_{X_{i_0j_0k_0}}= \mu_{k_0}^{i_0j_0k_0} \wedge dF_{k_0} + \mu_{j_0}^{i_0j_0k_0} \wedge dF_{j_0} + \mu_{i_0}^{i_0j_0k_0} \wedge dF_{i_0}$$
for some homogeneous affine forms $\mu_{l}^{i_0j_0k_0}$ of degree $d_{i_0} + d_{j_0} -d_l$.

Now, we want to prove that these new forms are also divisible by  $dF_{i_0}$, $dF_{j_0}$ and $dF_{k_0}$.  
Select a new rational vector field $Z_{j_0} := Y_{j_0}^{i_0j_0k_0}$ with the corresponding $\delta$-property, and replace the above decomposition for $\beta_{i_0j_0}$ into  \eqref{importanteq} to obtain
\begin{align*}
((\lambda_{i_0j_0} -\lambda_{j_0k_0} +\lambda_{i_0k_0}) \mu_{j_0}^{i_0j_0k_0}- 2\lambda_{i_0j_0}\,\mu_{k_0}^{i_0k_0j_0}) \wedge dF_{i_0} \wedge dF_{j_0} \wedge dF_{k_0}= 0. 
\end{align*}
For each $l\in \{i_0,j_0,k_0\}$ define $\gamma_{l}^{i_0j_0k_0} := \mu_{l}^{i_0j_0k_0} \wedge dF_{i_0} \wedge dF_{j_0} \wedge dF_{k_0}$ 
(similarly with $\gamma_{l}^{i_0k_0j_0}$ and $\gamma_{l}^{j_0k_0i_0}$). 
Next, our last equation may be written as
$$Eq(I_{i_0j_0k_0}):\hspace{0.4cm} (\lambda_{i_0j_0} -\lambda_{j_0k_0} +\lambda_{i_0k_0})\gamma_{j_0}^{i_0j_0k_0} -2\lambda_{i_0j_0}\gamma_{k_0}^{i_0k_0j_0} = 0, $$
where the tag  $Eq(I_{i_0j_0k_0})$ refers to the order in which indexes were selected. 
Permuting these indexes, we can construct a linear system of equations in order to deduce our claim. 
In particular, from $Eq(I_{i_0j_0k_0})+Eq(I_{k_0j_0i_0})$ we deduce $ \gamma_{k_0}^{i_0k_0j_0}=\gamma_{j_0}^{i_0j_0k_0}$. Then, using again 
$Eq(I_{i_0j_0k_0})$ we get $\gamma_{j_0}^{i_0j_0k_0} =0$ on $X_{i_0j_0k_0}$. With a similar argument we can also prove the other vanishing conditions for the forms $\gamma_{l}^{i_0j_0k_0}$.

Now, we are able to apply again the division lemma to the forms $\mu_l^{i_0j_0k_0}$, and obtain the following  decomposition 
for the original form $\beta_{i_0j_0}$ on $X_{i_0j_0k_0}$:
\begin{align}\label{decomposition}
 \beta_{i_0j_0} = A_{i_0j_0k_0}^{i_0j_0}dF_{i_0}\wedge dF_{j_0} + A_{i_0k_0j_0}^{i_0j_0}dF_{i_0}\wedge dF_{k_0} + A_{j_0k_0i_0}^{i_0j_0}dF_{j_0}\wedge dF_{k_0}.
\end{align}
If we fix another index $l_0$, then all the possible decompositions for $\beta_{i_0j_0}$ must coincide in the intersection 
$X_{i_0j_0k_0l_0}=X_{i_0j_0k_0}\cap X_{i_0j_0l_0}$. In particular, we have
\begin{multline*}
  A_{i_0j_0k_0}^{i_0j_0}dF_{i_0}\wedge dF_{j_0} + A_{i_0k_0j_0}^{i_0j_0}dF_{i_0}\wedge dF_{k_0} + A_{j_0k_0i_0}^{i_0j_0}dF_{j_0}\wedge dF_{k_0} \\
 =  A_{i_0j_0l_0}^{i_0j_0}dF_{i_0}\wedge dF_{j_0} + A_{i_0l_0j_0}^{i_0j_0}dF_{i_0}\wedge dF_{l_0} + A_{j_0l_0i_0}^{i_0j_0}dF_{j_0}\wedge dF_{l_0}.
\end{multline*}
From the normal crossings hypothesis we deduce
\begin{equation*}\label{deduction}
A_{i_0j_0k_0}^{i_0j_0} = A_{i_0j_0l_0}^{i_0j_0}\hspace{0.3cm} \mbox{and} \hspace{0.3cm} A_{i_0k_0j_0}^{i_0j_0}=A_{j_0k_0l_0}^{i_0j_0}=0 \hspace{0.5cm}\mbox{on}\,\, X_{i_0j_0k_0l_0}.
 \end{equation*}
The first condition implies that $A_{i_0j_0k_0}^{i_0j_0}$ does not depend on $k_0$. Notice that its degree equals to zero, and so we write $\lambda'_{i_0j_0}:=A_{i_0j_0k_0}^{i_0j_0}$. 
On the other hand, from the second condition we have 
$A_{i_0k_0j_0}^{i_0j_0}= \hat{F}_{i_0j_0k_0}B_{i_0k_0}^{i_0j_0} \hspace{0.2cm}\mbox{and}\hspace{0.2cm} A_{j_0k_0i_0}^{i_0j_0}= \hat{F}_{i_0j_0k_0}B_{j_0k_0}^{i_0j_0}$ on 
$X_{i_0j_0k_0},$
for some homogeneous polynomials $B_{i_0k_0}^{i_0j_0}$ and $B_{j_0k_0}^{i_0j_0}$ of degrees
$deg(B_{i_0k_0}^{i_0j_0})= 2d_{j_0}+ d_{i_0} - d $ and $deg(B_{j_0k_0}^{i_0j_0})= 2d_{i_0}+ d_{j_0} - d$.
Finally, our claim follows from \eqref{decomposition}.
 \end{proof}

\subsubsection{The balanced assumption and end of the proof. Steps 5 to 7.}\label{secbalanced} 

We need the following definition in order
to restrict the possible degrees of the polynomials $B_{i_0k_0}^{i_0j_0}$ and $B_{j_0k_0}^{i_0j_0}$ introduced in the previous step.

  \begin{definition}\label{defbalanced}
We say that an m-tuple of degrees $\d = (d_1,\dots,d_m)$ is \textit{k-balanced} if for each $I\subset \{1,\cdots,m\}$ of size $|I|=k$, the following inequality holds:
\begin{equation}\label{ineqbal}
\sum_{i\in I}d_i=d_{i_1}+\dots + d_{i_k} < \sum_{l\notin I}d_l.
 \end{equation}
 \end{definition}
 
\begin{ejem}\label{ejemplobal}
 If all the possible degrees are  equal to 1, i.e., $\d =(1,\dots,1)$, then $\d$ is $k$-balanced if and only if $\,2k < m$.
\end{ejem} 

\begin{remark}
When $k=1$, condition \eqref{ineqbal} is the same that appears in \cite[Corollaries 5.10 and 5.11]{GKZ} and \cite[Definition 8.16]{CGM}. 
\end{remark}

 \begin{remark}\label{balancedremark}
 If a vector $\d$ is $k$-balanced then it is $k'$-balanced for all $k'$ lower than $k$.
Although, the converse of this fact is trivially not true (not even for large $m$). For example:
 $$\d=\big(1,2,\dots,m-2, \tfrac{(m-2)(m-1)}{2},\tfrac{(m-2)(m-1)}{2}\big)$$
is 1-balanced but not 2-balanced for every $m>3$.     
 \end{remark}

Now, we are ready to continue with the following step.

 \begin{proposition}[Step 5]\label{step5}
Suppose $\d$ is 2-balanced. 
Then, for any $\beta \in \T_{\omega}\F_2(d,\P^n)$ that vanishes on $X^3_{\D_F}$ there exist constants $\{\lambda'_{ij}\}_{i\ne j}$ and 
a homogeneous affine form $\gamma \in H^0(\CC^{n+1},\Omega^2_{\CC^{n+1}})$ of total degree $d$, with $\gamma|_{X_{\D_F}^2}=0$, such that 
 $$\beta = \sum_{i\ne j} \lambda_{ij}'\hat{F_{ij}}dF_i \wedge dF_j + \gamma.$$
\end{proposition}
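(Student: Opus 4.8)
The plan is to combine the pointwise decomposition of Proposition \ref{step4} (Step 4) with two degree estimates furnished by the $2$-balanced hypothesis. Recall from the discussion preceding Step 4 that, since $\beta$ vanishes on $X^3_{\D_F}$, Lemma \ref{vanishinglemma} (vanishing lemma) lets us write $\beta = \sum_{i\neq j}\hat{F}_{ij}\beta_{ij}$ with $\beta_{ij}=\beta_{ji}$ homogeneous of degree $d_i+d_j$. The constants $\lambda'_{ij}$ will be exactly the numbers produced in Step 4, and I will set $\gamma = \beta - \sum_{i\neq j}\lambda'_{ij}\hat{F}_{ij}\,dF_i\wedge dF_j$, which is homogeneous of total degree $d$; the whole task is then to verify that this $\gamma$ vanishes on $X^2_{\D_F}=\bigcup_{i<j}X_{ij}$.

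First I would feed the $2$-balanced condition into Step 4. The polynomials there have degrees $\deg B^{i_0j_0}_{i_0k_0}=2d_{j_0}+d_{i_0}-d$ and $\deg B^{i_0j_0}_{j_0k_0}=2d_{i_0}+d_{j_0}-d$; since Definition \ref{defbalanced} gives $2(d_{i_0}+d_{j_0})<d$, both of these degrees are strictly negative, so these polynomials vanish identically. Hence the decomposition of Step 4 collapses to $\beta_{i_0j_0}=\lambda'_{i_0j_0}\,dF_{i_0}\wedge dF_{j_0}$ on every stratum $X_{i_0j_0k_0}$, where $\lambda'_{i_0j_0}$ is independent of $k_0$.

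Next I would upgrade this equality on the codimension-three strata to one on the codimension-two stratum $X_{i_0j_0}$. Put $\theta_{i_0j_0}:=\beta_{i_0j_0}-\lambda'_{i_0j_0}\,dF_{i_0}\wedge dF_{j_0}$, a global homogeneous $2$-form of degree $d_{i_0}+d_{j_0}$ that vanishes, as a section of $\Omega^2_{\CC^{n+1}}$, along $\bigcup_{k_0}X_{i_0j_0k_0}=X_{i_0j_0}\cap(\hat{F}_{i_0j_0}=0)$. On the smooth complete intersection $X_{i_0j_0}$ the restriction $\hat{F}_{i_0j_0}|_{X_{i_0j_0}}$ cuts out a reduced divisor (by the normal crossings assumption), so a section of the locally free sheaf $\Omega^2_{\CC^{n+1}}|_{\C(X_{i_0j_0})}$ vanishing on it is divisible by $\hat{F}_{i_0j_0}$, exactly the division principle underlying Lemma \ref{vanishinglemma}. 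I therefore obtain $\theta_{i_0j_0}|_{X_{i_0j_0}}=\hat{F}_{i_0j_0}\,\tau$ with $\deg\tau=2(d_{i_0}+d_{j_0})-d$, and by the $2$-balanced inequality this degree is again negative, forcing $\tau=0$ and hence $\theta_{i_0j_0}|_{X_{i_0j_0}}=0$.

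To conclude, I would restrict $\gamma$ to a fixed $X_{i_0j_0}$: in both sums every term indexed by a pair $\{i,j\}\neq\{i_0,j_0\}$ carries a factor $\hat{F}_{ij}$ divisible by $F_{i_0}$ or $F_{j_0}$, hence vanishes there, leaving $\gamma|_{X_{i_0j_0}}=\hat{F}_{i_0j_0}\,\theta_{i_0j_0}|_{X_{i_0j_0}}=0$. Since this holds for all $i_0<j_0$, the form $\gamma$ vanishes on $X^2_{\D_F}$, as required. The main obstacle is precisely this passage from the codimension-three strata to $X_{i_0j_0}$: both the vanishing of the $B$-polynomials and the vanishing of $\tau$ rest on the negativity of the degree $2(d_{i_0}+d_{j_0})-d$, which is the $2$-balanced inequality itself; without it these correction terms can survive, and $\gamma$ need not vanish on $X^2_{\D_F}$ — exactly the difficulty flagged for the non-balanced case.
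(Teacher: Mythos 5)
Your proposal is correct and follows essentially the same route as the paper's own proof: kill the polynomials $B^{i_0j_0}_{i_0k_0}$, $B^{i_0j_0}_{j_0k_0}$ from Step 4 by the negative-degree argument, upgrade the identity $\beta_{ij}=\lambda'_{ij}\,dF_i\wedge dF_j$ from $X_{ijk}$ to $X_{ij}$ by dividing by $\hat{F}_{ij}$ (a variant of Lemma \ref{vanishinglemma}) and using the $2$-balanced inequality once more, then conclude that $\beta$ and $\sum_{i\ne j}\lambda'_{ij}\hat{F}_{ij}\,dF_i\wedge dF_j$ agree on $X^2_{\D_F}$. No gaps to flag.
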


\begin{proof}
From Proposition \ref{step4} (step 4) we know that $\beta = \sum_{i\ne j}\hat{F}_{ij}\beta_{ij}$, where
$$ \beta_{ij} = \lambda_{ij}'dF_{i}\wedge dF_{j} + \hat{F_{ijk}}(B_{ik}^{ij}dF_{i} \wedge dF_{k} + B_{jk}^{ij}dF_{j} \wedge dF_{k})\hspace{0.3cm}\mbox{on}\,\, X_{ijk}.$$
Since $\d$ is 2-balanced, computing degrees we deduce that the polynomials $B_{ik}^{ij}$ and $B_{jk}^{ij}$ must 
be equal to zero. Then, for every $i,j$ and $k$ we have
\begin{equation}\label{eqstep5}
\beta_{ij}= \lambda_{ij}'dF_i \wedge dF_j \hspace{0.5cm}\mbox{on}\hspace{0.2cm}X_{ijk}.
 \end{equation}
Observe that the form $(\beta_{ij} - \lambda_{ij}'dF_i \wedge dF_j)|_{X_{ij}} $ vanishes on the divisor 
$(\hat{F}_{ij}|_{X_{ij}}=0)$. Hence, with a slight modification of the vanishing lemma (Lemma \ref{vanishinglemma}), we get
$\beta_{ij} - \lambda_{ij}'dF_i \wedge dF_j  = \hat{F}_{ij}\mu_{ij}$ on $X_{ij}$,
for some $\mu_{ij}\in H^0(\CC^{n+1},\Omega^2_{\CC^{n+1}})$ homogeneous form of degree $d_i + d_j - \sum_{k\ne i,j}d_k$. Using again our hypothesis among $\d$,
we deduce that equality \eqref{eqstep5} holds in $X_{ij}$.

Finally, observe that $\beta$ and 
$\sum_{i\ne j} \lambda_{ij}\hat{F}_{ij}dF_i\wedge dF_j$
have the same restriction to $X_{\D_F}^2$, and hence their difference vanishes on this stratum.
\end{proof}

\begin{remark}
With the notations above, notice that the forms $\beta$ and $\sum_{i\ne j} \lambda_{ij}'\hat{F_{ij}}dF_i \wedge dF_j$ satisfy the integrability perturbation equation 
\eqref{perturbation2}, and therefore the same holds for $\gamma$. But, a priori, we can not assume that $\gamma$ satisfies the other perturbation equation \eqref{perturbation}.
\end{remark}

\begin{proposition}[Step 6]\label{step6}
Assume $\d$ is 2-balanced. If $\gamma$ is a homogeneous affine 2-form of degree d satisfying 
 the integrability perturbation equation \eqref{perturbation2}
 and $\gamma|_{X_{\D_F}^2}=0$, then $\gamma=0$.
\end{proposition}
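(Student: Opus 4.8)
The plan is to use the vanishing of $\gamma$ on the codimension-two stratum together with the integrability perturbation equation \eqref{perturbation2}, and to propagate the vanishing of $\gamma$ downward through the strata $X_{ijk}\supset X_{ij}\supset X_i$ by repeated use of the division lemma, with the $2$-balanced hypothesis entering as a degree bound that kills the leftover pieces at each level.

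First I would record the shape of $\gamma$. Since $\gamma|_{X^2_{\D_F}}=0$, Lemma \ref{vanishinglemma} gives $\gamma=\sum_i \hat{F}_i\gamma_i$ with each $\gamma_i$ a homogeneous affine $2$-form of total degree $d_i$. Restricting \eqref{perturbation2} to $X_{ij}$ kills the term $i_v\gamma\wedge d\omega$ (because $\gamma|_{X_{ij}}=0$), while a direct computation, keeping track of which $\hat{F}_\bullet$ survive on $X_{ij}$ exactly as in Steps 1, 2 and 4, gives $\omega|_{X_{ij}}=2\lambda_{ij}\hat{F}_{ij}\,dF_i\wedge dF_j$ and $d\gamma|_{X_{ij}}=\hat{F}_{ij}(dF_j\wedge\gamma_i+dF_i\wedge\gamma_j)$. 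Cancelling the nonzero factor $\hat{F}_{ij}^2$ and $\lambda_{ij}$ (genericity of $\U_2(\d)$) and varying $v$ so that $i_v(dF_i)$ and $i_v(dF_j)$ take independent values (possible at each point of $X_{ij}-X^3_{\D_F}$ by the normal-crossing property $(\ast)$), I obtain $dF_i\wedge dF_j\wedge\gamma_i=0$ on $X_{ij}$ for every ordered pair $i\ne j$.

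Next I would convert these wedge relations into honest vanishing by a degree-controlled descent. The division lemma (Lemma \ref{divisionlemma}) on $X_{ij}$ writes $\gamma_i|_{X_{ij}}=\sigma\wedge dF_i+\tau\wedge dF_j$, where $\sigma$ has total degree $0$ and $\tau$ total degree $d_i-d_j$; since a nonzero $1$-form has total degree at least $1$, necessarily $\sigma=0$, so $\gamma_i|_{X_{ij}}=\tau\wedge dF_j$. Comparing this with the analogous expression coming from $X_{ik}$ on the triple intersection shows that $\gamma_i|_{X_{ijk}}$ is annihilated by $dF_j$ and by $dF_k$, hence equals $c^{ijk}\,dF_j\wedge dF_k$ for a function $c^{ijk}$ of degree $d_i-d_j-d_k$; comparing on the quadruple intersections $X_{ijkl}$ forces $c^{ijk}$ to vanish on $(\hat{F}_{ijk}|_{X_{ijk}}=0)$, and as $\deg c^{ijk}<\deg\hat{F}_{ijk}$ this gives $c^{ijk}=0$, i.e. $\gamma_i|_{X_{ijk}}=0$. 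Now $\gamma_i|_{X_{ij}}$ vanishes on the divisor cut out by $\hat{F}_{ij}$ inside $X_{ij}$, so it is divisible by $\hat{F}_{ij}$; the quotient has total degree $2d_i+d_j-d$, which is negative precisely because $\d$ is $2$-balanced, whence $\gamma_i|_{X_{ij}}=0$. Running the same argument one stratum up, $\gamma_i|_{X_i}$ is divisible by $\hat{F}_i$ with quotient of total degree $2d_i-d<0$, so $\gamma_i|_{X_i}=0$ and therefore $\gamma|_{X_i}=0$ for every $i$. Finally, $\gamma$ vanishing on every $X_i$ means that each coefficient of $\gamma$ is divisible by every $F_i$, hence by $F$; the quotient would be a $2$-form of total degree $0$, which is impossible, so $\gamma=0$.

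The main obstacle is the third paragraph: organizing the restriction of \eqref{perturbation2} to $X_{ij}$ correctly and then running the three-level descent $X_{ijk}\supset X_{ij}\supset X_i$ so that at each level the surviving pieces have total degree too small to be nonzero. The $2$-balanced hypothesis is used precisely to make the degree $2d_i+d_j-d$ negative at the crucial middle step; the $1$-balanced inequality alone would not suffice there.
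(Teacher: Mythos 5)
Your proof is correct and follows essentially the same path as the paper's: the vanishing-lemma decomposition $\gamma=\sum_i\hat{F}_i\gamma_i$, restriction of the integrability perturbation equation \eqref{perturbation2} to $X_{ij}$ to obtain $dF_i\wedge dF_j\wedge\gamma_i=0$, then the division lemma plus degree counting on triple and quadruple intersections to force $\gamma_i|_{X_{ijk}}=0$, with the $2$-balanced bound $2d_i+d_j-d<0$ doing the decisive work. The only (harmless) difference is the endgame: the paper invokes that the saturated ideal of $\bigcup_{j,k\ne i}X_{ijk}$ is generated by $\{\hat{F}_{ij}\}_{j\ne i}$ to write $\gamma_i=\sum_{j\ne i}\hat{F}_{ij}\gamma_{ij}$ globally and kill each $\gamma_{ij}$ by degree, whereas you descend stratum by stratum ($X_{ij}$, then $X_i$, then $\CC^{n+1}$) using divisibility by a single polynomial at each stage --- both variants hinge on exactly the same degree computation.
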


\begin{proof}
Using the vanishing lemma, we get
$\gamma = \sum_{l} \hat{F_l}\gamma_l,$
for some homogeneous forms $\gamma_l\in H^0(\CC^{n+1},\Omega_{\CC^{n+1}}^2)$ of total degree $d_l$. 
 
With the same idea as in other steps, we fix $i_0$ and $j_0$ distinct, 
and choose a rational local vector field $Y_{j_0}= Y_{i_0j_0}^{j_0}$ with the corresponding $\delta$-property.
In this case, the restriction of the integrability perturbation equation to $X_{i_0j_0}$ reduces to   
$$\lambda_{j_0i_0}(\hat{F}_{i_0j_0})^2\,dF_{i_0}\wedge dF_{j_0}\wedge \gamma_{i_0} = 0.$$
Then it follows from the division lemma  that 
$$\gamma_{i_0} = \mu_{i_0j_0}\wedge dF_{i_0} + \nu_{i_0j_0}\wedge dF_{j_0} \hspace{0.5cm}\mbox{on}\hspace{0.3cm}X_{i_0j_0},$$
for certain homogeneous affine forms $\mu_{i_0j_0}$ and $\nu_{i_0j_0}$.
But since $dF_{i_0}$ and $\gamma_{i_0}$ have the 
same degree, we deduce that $\mu_{i_0j_0}=0$. 
Now, select a new index $k_0$, and notice that  
$\gamma_{i_0} = \nu_{i_0j_0}\wedge dF_{j_0} = \nu_{i_0k_0}\wedge dF_{k_0}$ on $X_{i_0j_0k_0}$.
If we take the wedge product by $dF_{k_0}$,  then we get
$\nu_{i_0j_0}\wedge dF_{j_0} \wedge dF_{k_0} = 0.$ 
Again by the division lemma, we can select  $A_{i_0j_0k_0}$ such that
$$\gamma_{i_0} = A_{i_0j_0k_0} dF_{j_0} \wedge dF_{k_0} \hspace{0.3cm}\mbox{on}\hspace{0.3cm}X_{i_0j_0k_0}.$$
Furthermore, $A_{i_0j_0k_0}$ is divisible by $\hat{F}_{i_0j_0k_0}$. 
This is just a consequence of the equality
$$ A_{i_0j_0k_0} dF_{j_0} \wedge dF_{k_0} =  A_{i_0j_0l} dF_{j_0} \wedge dF_{l}\hspace{0.3cm}\forall l\ne i_0,j_0,k_0$$
and the normal crossings hypothesis. Then, we obtain 
$$\gamma_{i_0} = \hat{F}_{i_0j_0k_0}B_{i_0j_0k_0} dF_{j_0} \wedge dF_{k_0}$$ on $X_{i_0j_0k_0}$, for some new homogeneous polynomial $B_{i_0j_0k_0}$
of degree $d_{i_0} - \sum_{l\ne i_0 } d_l.$
But notice that this degree is negative because $\d$ is also 1-balanced (see Remark \ref{balancedremark}). 
As a consequence,
$\gamma_{i}|_{X_{ijk}}  = 0 \hspace{0.2cm}\mbox{for all} \hspace{0.2cm}  i\ne j \ne k.$

Finally, for each index $i$, we deduce that $\gamma_i$ vanishes on the subvariety of $X_{\D_F}^3$ defined by
$\bigcup_{j,k\ne i} X_{ijk}$. It is also clear that its corresponding homogeneous saturated ideal is generated by 
$ \{\hat{F}_{ij}\}_{j\ne i}$. With a slight modification of the vanishing lemma (Lemma \ref{vanishinglemma}), we have
$\gamma_{i} = \sum_{j\ne i} \hat{F}_{ij}\,\gamma_{ij},$ for some homogeneous affine forms $\gamma_{ij}$.
Since $\d$ is 2-balanced, each form $\gamma_i$ must be equal to zero, and hence the claim.
\end{proof}

The next proposition summarizes the last step of the proof.
We finally show that each Zariski tangent vector at $\omega$ vanishing on $X^3_{\D_F}$ lies in the image of $d\rho(\lambda,\underline{F})$.

\begin{proposition}[Step 7]\label{step7}
Suppose $\d$ is 2-balanced, and let $\omega=\rho(\lambda,\underline{F})=\sum \lambda_{ij}
 \hat{F_{ij}}dF_i \wedge dF_j $
 be a logarithmic 2-form of type $\d$, with $\lambda \in \Gr(2,\CC^m_{\d})$. If $\beta$ is a Zariski tangent vector of $\F_2(d,\P^n)$ at $\omega$ and also 
 $\beta|_{X_{\D_F}^3}=0$, then there exists
 $\lambda' \in \T_{\lambda}\Gr(2,\CC^m_{\d})$ such that
 $$\beta = \sum_{i\ne j} \lambda_{ij}'\hat{F}_{ij}dF_i \wedge dF_j = d\rho(\lambda,\underline{F})(\lambda',0).$$
\end{proposition}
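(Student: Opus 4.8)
The plan is to assemble the two preceding steps and then to recognize that the constant coefficients produced there assemble into a genuine tangent vector of the Grassmannian. Since $\d$ is $2$-balanced and $\beta|_{X_{\D_F}^3}=0$, Proposition \ref{step5} (step 5) furnishes constants $\{\lambda'_{ij}\}_{i\ne j}$ and a homogeneous affine $2$-form $\gamma$ of degree $d$ with $\gamma|_{X_{\D_F}^2}=0$ such that
$$\beta = \sum_{i\ne j}\lambda'_{ij}\hat{F}_{ij}dF_i\wedge dF_j + \gamma.$$
As noted in the remark following step 5, both $\beta$ and the logarithmic sum on the right satisfy the integrability perturbation equation \eqref{perturbation2}, hence so does $\gamma$. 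Since $\gamma$ also vanishes on $X_{\D_F}^2$ and $\d$ is $2$-balanced, Proposition \ref{step6} (step 6) forces $\gamma=0$. Therefore $\beta=\sum_{i\ne j}\lambda'_{ij}\hat{F}_{ij}dF_i\wedge dF_j$, and it only remains to check that the antisymmetric array $\lambda'=(\lambda'_{ij})$ represents an element of $\T_{\lambda}\Gr(2,\CC^m_{\d})$.

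To this end I would verify the two conditions that characterize $\T_{\lambda}\Gr(2,\CC^m_{\d})$: first $i_{\d}(\lambda')=0$, so that $\lambda'\in\bigwedge^2\CC^m_{\d}$, and second $\lambda'\wedge\lambda=0$, which is equation \eqref{tangrasseq}. The first condition is immediate, since $\beta$ is a genuine global section of $\Omega^2_{\P^n}(d)$ and hence satisfies the descend condition $i_R(\beta)=0$; writing $\beta/F=\sum_{i<j}\lambda'_{ij}\tfrac{dF_i}{F_i}\wedge\tfrac{dF_j}{F_j}$ as a global logarithmic form, Proposition \ref{logqforms} (equivalently Corollary \ref{logKoszul}) yields $i_{\d}(\lambda')=0$. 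Alternatively, Euler's relation $i_R(dF_k)=d_kF_k$ shows that $i_R(\beta)$ is a constant multiple of $\sum_k (i_{\d}(\lambda'))_k\,\hat{F}_k\,dF_k$, whose vanishing forces $i_{\d}(\lambda')=0$ by the linear independence of the forms $\hat{F}_k\,dF_k$ supplied by Corollary \ref{jouanoloulemmasnc}.

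For the second condition I would finally use the decomposability perturbation equation \eqref{perturbation}, which has not yet been exploited in steps 1--6: it asserts $\beta\wedge\omega=0$. Dividing by $F^2$ and using $\omega=F\eta$ with $\eta=\sum_{i<j}\lambda_{ij}\tfrac{dF_i}{F_i}\wedge\tfrac{dF_j}{F_j}$, this becomes
$$\beta\wedge\omega = F^2\Bigg(\sum_{i<j<k<l}(\lambda'\wedge\lambda)_{ijkl}\,\tfrac{dF_i}{F_i}\wedge\tfrac{dF_j}{F_j}\wedge\tfrac{dF_k}{F_k}\wedge\tfrac{dF_l}{F_l}\Bigg)=0,$$
exactly as in the computation of Proposition \ref{pollogformsprop1}. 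Since $m>3$, Corollary \ref{jouanoloulemmasnc} applied to the $4$-vector $\lambda'\wedge\lambda$ forces $\lambda'\wedge\lambda=0$, which is \eqref{tangrasseq}. Thus $\lambda'\in\T_{\lambda}\Gr(2,\CC^m_{\d})$, and by the formula for the partial derivative $\alpha_1$ in Remark \ref{remarkderivateimage} we conclude $\beta=\sum_{i\ne j}\lambda'_{ij}\hat{F}_{ij}dF_i\wedge dF_j = d\rho(\lambda,\underline{F})(\lambda',0)$, as desired.

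I expect the only genuinely new ingredient here — and hence the crux of the step — to be the translation of the decomposability equation \eqref{perturbation} into the Plücker condition $\lambda'\wedge\lambda=0$; everything else is bookkeeping that combines steps 5 and 6 with the descend condition. A minor point worth keeping track of is that $\beta$ is only defined modulo $\langle\omega\rangle$, but since $\omega$ itself corresponds to $\lambda$ (which satisfies $i_{\d}(\lambda)=0$ and $\lambda\wedge\lambda=0$), both tangency conditions descend to well-defined statements on the quotient $\bigwedge^2(\CC^m_{\d})/_{\langle\lambda\rangle}$.
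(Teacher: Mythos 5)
Your proposal is correct and follows essentially the same route as the paper's proof: combine Steps 5 and 6 to obtain $\beta = \sum_{i\ne j}\lambda'_{ij}\hat{F}_{ij}dF_i\wedge dF_j$, derive $i_{\d}(\lambda')=0$ from the descend condition $i_R(\beta)=0$, and derive $\lambda'\wedge\lambda=0$ from the decomposability perturbation equation $\beta\wedge\omega=0$ together with Corollary \ref{jouanoloulemmasnc}. The extra details you supply (the remark guaranteeing $\gamma$ satisfies \eqref{perturbation2} so that Step 6 applies, and the quotient-by-$\langle\lambda\rangle$ bookkeeping) are exactly the points the paper treats, only more tersely.
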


\begin{proof}
We can use Propositions \ref{step5} and \ref{step6} (steps 5 and 6) to deduce that any Zariski tangent vector with our hypotheses may be written as
$\beta = \sum_{i\ne j} \lambda_{ij}'\hat{F_{ij}}dF_i \wedge dF_j.$
Notice that $i_R(\beta)=0$ if and only if $i_{\d}(\lambda')=0$. 
Hence it is easy to see that $\lambda'=(\lambda_{ij}')$ can 
be considered as an element of $\bigwedge^2 (\CC^m_{\d})/_{\langle \lambda \rangle}$.
Also, recall from Section \ref{The derivate of the natural parametrization}  that  $\T_{\lambda}\Gr(2,\CC^m_{\d})$ can be represented by vectors  $\lambda' \in \bigwedge^2 (\CC^m_{\d})/_{\langle \lambda \rangle}$ such that $\lambda \wedge \lambda' = 0$.
Now, from the decomposability perturbation equation \eqref{perturbation}, that is
$\beta \wedge \omega = 0$, we get
 $$F^2 \cdot \sum_{i\ne j\ne k\ne l}(\lambda'\wedge\lambda)_{ijkl} \frac{dF_i}{F_i} \wedge \frac{dF_j}{F_j}\wedge \frac{dF_k}{F_k} \wedge \frac{dF_l}{F_l}=0.$$
Then from Corollary \ref{jouanoloulemmasnc} it follows that $\lambda'\wedge \lambda=0$, and this implies our claim.

\end{proof}

\begin{corollary}
Combining Propositions \ref{step3} and \ref{step7} (steps 3 and 7) we conclude
the whole proof of the surjectivity of $d\rho$ (Proposition \ref{surjectivity}), 
which also implies our Theorem \ref{main}.  
\end{corollary}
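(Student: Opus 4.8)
The plan is to take an arbitrary Zariski tangent vector $\alpha\in\T_\omega\F_2(d,\P^n)$ and peel it apart along the incidence stratification $X^k_{\D_F}=\coprod_{|I|=k}X_I$ of the normal crossings divisor $\D_F$, exploiting the fact (Remark \ref{remarkint}) that the three terms of the derivative formula \eqref{derivateimage} live at different depths: the $\lambda'$-term $\sum_{i\ne j}\lambda'_{ij}\hat{F}_{ij}\,dF_i\wedge dF_j$ survives down to codimension $2$, whereas the $F'$-terms already vanish on $X^3_{\D_F}$ and $X^4_{\D_F}$. Thus the deeper stratum $X^3_{\D_F}$ should detect only the $F'$-deformation, while the shallow stratum $X^2_{\D_F}$ should detect the $\lambda'$-deformation. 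The only input about $\alpha$ are the two perturbation equations \eqref{perturbation} and \eqref{perturbation2}, so every constraint must be squeezed out of them by restricting to strata and contracting against the logarithmic vector fields with the $\delta$-property from Remark \ref{field}; the Division Lemma \ref{divisionlemma} and the Vanishing Lemma \ref{vanishinglemma} then turn the resulting pointwise identities into global factorizations.

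First I would extract the polynomials $F'_k$. I expect to show $\alpha|_{X^4_{\D_F}}=0$ and compute $\alpha|_{X^3_{\D_F}}$: since $X^3_{\D_F}\subset S_\omega$, on each piece $X_{ijk}$ the contraction $i_v\omega$ vanishes and \eqref{perturbation2} degenerates to $i_v\alpha\wedge d\omega=0$, and because $d\omega|_{X_{ijk}}$ is the multiple $(\lambda_{ij}-\lambda_{ik}+\lambda_{jk})\hat{F}_{ijk}\,dF_i\wedge dF_j\wedge dF_k$, the genericity inequality $\lambda_{ij}-\lambda_{ik}+\lambda_{jk}\ne 0$ built into $\U_2(\d)$ lets me cancel the scalar and, after contracting against $\delta$-property fields, reach $\alpha\wedge dF_i\wedge dF_j\wedge dF_k=0$ on $X_{ijk}$. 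Two applications of the Division Lemma then give $\alpha=\sum\hat{F}_{ijk}A_{ijk}\,dF_i\wedge dF_j$ on $X_{ijk}$, which the Vanishing Lemma upgrades to a global shape $\alpha=\sum\hat{F}_{ijk}A_{ijk}\,dF_i\wedge dF_j+\sum\hat{F}_{ij}\varepsilon_{ij}$. Feeding this back into \eqref{perturbation2}, contracting a $\delta$-field and restricting to fourfold intersections $X_{ijkl}$ should produce a linear relation among the normalized coefficients $B_{ijk}:=A_{ijk}/\lambda_{ij}$ which, after using the remaining genericity inequalities, forces the gluing relations $B_{ijl}=B_{jkl}=B_{ikl}$ on $X_{ijkl}$. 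The Fundamental Lemma \ref{fundamentallemma} then assembles the $B_{ijk}$ into genuine global polynomials $F'_k\in S_{d_k}$ with $B_{ijk}=F'_k$ on $X_{ijk}$, and subtracting $d\rho(\lambda,\underline{F})(0,(F'_i))$ leaves a tangent vector $\beta$ with $\beta|_{X^3_{\D_F}}=0$.

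For the remaining $\lambda'$-part the $2$-balanced hypothesis is essential. Writing $\beta=\sum\hat{F}_{ij}\beta_{ij}$ by the Vanishing Lemma and rerunning the $\delta$-field contraction of \eqref{perturbation2} on triple intersections, I would obtain $\beta_{ij}=\lambda'_{ij}\,dF_i\wedge dF_j+\hat{F}_{ijk}(\cdots)$ on $X_{ijk}$; the correction coefficients carry degrees $2d_j+d_i-d$ and $2d_i+d_j-d$, which are negative exactly when $d_i+d_j<\sum_{k\ne i,j}d_k$, so balancedness annihilates them. Iterating the degree argument strips $\beta$ down to $\sum\lambda'_{ij}\hat{F}_{ij}\,dF_i\wedge dF_j+\gamma$ with $\gamma|_{X^2_{\D_F}}=0$, and one more pass through \eqref{perturbation2} together with $1$-balancedness forces $\gamma=0$. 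Finally the decomposability perturbation equation $\beta\wedge\omega=0$, rewritten as a logarithmic $4$-form proportional to $\sum(\lambda'\wedge\lambda)_{ijkl}\tfrac{dF_i}{F_i}\wedge\tfrac{dF_j}{F_j}\wedge\tfrac{dF_k}{F_k}\wedge\tfrac{dF_l}{F_l}$ and fed to Corollary \ref{jouanoloulemmasnc}, yields $\lambda'\wedge\lambda=0$, i.e. $\lambda'\in\T_\lambda\Gr(2,\CC^m_{\d})$, giving $\beta=d\rho(\lambda,\underline{F})(\lambda',0)$ and closing the surjectivity.

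I expect the genuine obstacle to be the middle extraction of the $F'_k$: setting up the integrability equation on the fourfold strata so that the resulting linear system in the $B_{ijk}$ collapses to the clean relations $B_{ijl}=B_{jkl}=B_{ikl}$, and checking that the three genericity inequalities cutting out $\U_2(\d)$ are precisely what make that system nondegenerate. The antisymmetry and index-permutation bookkeeping there is delicate and is where the choice of $\U_2(\d)$ is forced; by comparison the balanced degree arguments at the end should be routine once the coefficient degrees are tracked correctly.
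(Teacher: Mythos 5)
Your proposal is correct and takes essentially the same route as the paper: it reconstructs Steps 1--7 of Subsection \ref{surjectivitysection} in the same order (vanishing on $X^4_{\D_F}$ via the genericity of $\U_2(\d)$, the gluing relations for the $B_{ijk}$ and the Fundamental Lemma \ref{fundamentallemma} to extract the $F'_k$, subtraction of $d\rho(\lambda,\underline{F})(0,(F'_i))$ to reach a tangent vector $\beta$ vanishing on $X^3_{\D_F}$, and then the 2-balanced degree arguments plus Corollary \ref{jouanoloulemmasnc} to conclude $\beta=d\rho(\lambda,\underline{F})(\lambda',0)$), which is exactly how the paper combines Propositions \ref{step3} and \ref{step7} to prove Proposition \ref{surjectivity} and hence Theorem \ref{main}.
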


\begin{corollary}
Assume $m,n \in \NN_{> 4}$. From Example \ref{ejemplobal} we deduce that a generic linear logarithmic 2-form is stable. In other words, 
$\L_2(\d,n)$ determines an irreducible component of $\F_2(d,\P^n)$ when $\d = (1,\dots,1)$. Furthermore, observe that any logarithmic form of type $\d=(d_1,\dots,d_m)$
in $\P^n$ is the pullback of a linear logarithmic form by a quasi-homogeneous rational map.
\end{corollary}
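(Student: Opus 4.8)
The plan is to handle the two assertions separately, since each is short and draws on machinery already in place.

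For the stability statement I would only need to check the hypotheses of Theorem \ref{main} and then cite it. Taking $\d=(1,\dots,1)$ with $m$ entries gives total degree $d=\sum_i d_i = m$, so the ambient moduli space is $\F_2(m,\P^n)$. By Example \ref{ejemplobal}, the constant vector $(1,\dots,1)$ is $2$-balanced exactly when $2\cdot 2 < m$, i.e. when $m>4$, while the remaining requirement $n>3$ is subsumed by $n>4$. Thus all hypotheses of Theorem \ref{main} hold, and that theorem yields at once that $\L_2(\d,n)$ is an irreducible component of $\F_2(m,\P^n)$, generically reduced along $\rho(\U_2(\d))$. This is the whole of the first claim.

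For the pullback statement I would produce the map and the target form explicitly. Let $\Phi:\P^n\dashrightarrow\P^{m-1}$ be the rational map $\Phi=[F_1:\dots:F_m]$; at the level of affine cones it is the polynomial map $\CC^{n+1}\to\CC^m$, $x\mapsto(F_1(x),\dots,F_m(x))$. Since $\deg F_i=d_i$, the map is quasi-homogeneous: it carries the radial field $R=\sum_j x_j\tfrac{\partial}{\partial x_j}$ to the weighted radial field $\sum_i d_i y_i\tfrac{\partial}{\partial y_i}$ on $\CC^m$. On $\P^{m-1}$, with coordinates $y_1,\dots,y_m$, take the linear logarithmic $2$-form of type $(1,\dots,1)$ carrying the same residue vector $\lambda$, namely $\eta=\sum_{i<j}\lambda_{ij}\,\hat y_{ij}\,dy_i\wedge dy_j$ in the notation of Definition \ref{notationpolynomials}. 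Using $\Phi^*y_i=F_i$, hence $\Phi^*dy_i=dF_i$ and $\Phi^*\hat y_{ij}=\hat F_{ij}$, a direct substitution gives
$$\Phi^*\eta=\sum_{i<j}\lambda_{ij}\,\hat F_{ij}\,dF_i\wedge dF_j=\omega,$$
which is the asserted identity $\omega=\Phi^*\eta$.

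The only place where I would be careful is the compatibility of the two descent conditions. From $i_R(\Phi^*\eta)=\Phi^*\big(i_{\Phi_*R}\,\eta\big)$ together with $\Phi_*R=\sum_i d_i y_i\tfrac{\partial}{\partial y_i}$, the pullback $\omega$ descends to $\P^n$ precisely when $i_{\d}(\lambda)=0$ for $\d=(d_1,\dots,d_m)$, which is exactly the defining constraint of a logarithmic form of type $\d$. This is where the weighted, quasi-homogeneous nature of $\Phi$ is essential: it is the reason one may pull back a form shaped like a linear logarithmic form and land on a form of type $\d$ with the $d_i$ not all equal. Neither part is a serious obstacle—the first is a numerical check feeding Theorem \ref{main}, the second a one-line pullback—so the entire difficulty, such as it is, lies in matching this residue/Euler-field bookkeeping correctly.
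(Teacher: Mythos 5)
Your proposal is correct and coincides with the argument the paper leaves implicit: the first claim is exactly the check that $(1,\dots,1)$ is $2$-balanced precisely when $m>4$ (Example \ref{ejemplobal}) followed by an application of Theorem \ref{main}, and the second is the cone-level pullback computation together with the Euler-field compatibility $i_R(\Phi^*\eta)=\Phi^*\bigl(i_{\Phi_*R}\,\eta\bigr)$. One terminological caution: when the $d_i$ are not all equal, $[F_1:\dots:F_m]$ is not a well-defined rational map to $\P^{m-1}$ (different lifts of a point scale the coordinates by different powers), and your $\eta$, whose residues satisfy $i_{\d}(\lambda)=0$ rather than $i_{(1,\dots,1)}(\lambda)=0$, does not descend to $\P^{m-1}$ but to the weighted projective space $\P(d_1,\dots,d_m)$ --- this is exactly what ``quasi-homogeneous'' signals in the statement, and since your actual computation is carried out on the affine cones with the weighted radial field, nothing in it is affected.
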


\begin{remark}\label{casogeneral}
We expect that  Theorem \ref{main} could  be also proved for a larger codimension $q$ with $2\le q<n-1$ and $m>q+1$.  It seems that the argument  could be performed using the same proof schema, 
studying the restriction of a Zariski tangent vector to each stratum $X^k_{\D_F}$ for $k\le q+2$.  In fact, it is possible to see that each Zariski tangent vector of $\F_q(d,\P^n)$ at a logarithmic $q$-form of type $\d$ as in Definition \ref{defnaturalparametrization} necessarily vanishes on $X_{\D_F}^{q+2}$. 
However, most of the steps of the proof of Proposition \ref{surjectivity} are quite technical for $q>2$ and require new combinatorial ideas, even if we assume that $\d$ is q-balanced. In addition, the open question stated in Remark \ref{casogeneral1} seems to be  important for  obtaining a  generalized proof. 
\end{remark}

\medskip 
 
\begin{flushleft}
\small
Universidad de Buenos Aires and CONICET.\\
Instituto de Investigaciones Matemáticas Luis A. Santaló (IMAS).\\
Ciudad Universitaria - Pabellón 1. Buenos Aires (1428). ARGENTINA.\\

Javier Nicolás Gargiulo Acea, jgargiulo@dm.uba.ar

\end{flushleft} 
 
\end{document}